\documentclass[11pt,oneside, reqno]{amsart}
\usepackage[utf8]{inputenc}
\usepackage{amsmath}
\usepackage{amsthm,ifthen, amsfonts, amssymb,
srcltx, amsopn, color, enumerate}
 \usepackage{mathabx}
\usepackage{amssymb}
\usepackage{bbm}
\usepackage[mathscr]{euscript}
\usepackage{enumerate}
\usepackage{graphicx}
\usepackage{tikz}
\usepackage{tikz-cd}
\usepackage{color}
\usepackage{multicol}
\usepackage{mathrsfs}
\usepackage{hyperref}
\usepackage[normalem]{ulem}
\usepackage{float}
\usepackage{subcaption}
\usepackage{mathabx}
\usepackage{hieroglf}
\usepackage[T1]{fontenc}
\usepackage[cmtip,arrow]{xy}
\usepackage{pb-diagram, pb-xy}
\usepackage{overpic}
\usepackage{verbatim}
\usepackage{thmtools}
\renewcommand{\widehat}{\hat}

\newcommand{\showcommentsbox}{yes}

\newsavebox{\commentbox}
\newenvironment{com}%
{\ifthenelse{\equal{\showcommentsbox}{yes}}%
{\footnotemark
        \begin{lrbox}{\commentbox}
        \begin{minipage}[t]{1.25in}\raggedright\sffamily\tiny
        \footnotemark[\arabic{footnote}]}
{\begin{lrbox}{\commentbox}}}%
{\ifthenelse{\equal{\showcommentsbox}{yes}}%
{\end{minipage}\end{lrbox}\marginpar{\usebox{\commentbox}}}
{\end{lrbox}}}

\definecolor{Green}{RGB}{30, 150, 30}

\usetikzlibrary{arrows}
\usepackage{tikz-cd} 

\newtheorem{thm}{Theorem}[section]
\newtheorem{prop}[thm]{Proposition}
\newtheorem{claim}{Claim}
\newtheorem{lem}[thm]{Lemma}
\newtheorem{lemma}[thm]{Lemma}

\newtheorem{cor}[thm]{Corollary}

\newcounter{intronum}
\newtheorem{conj}[intronum]{Conjecture}
\newtheorem{thmi}{Theorem}
\newtheorem{cori}[thmi]{Corollary}

\newtheorem{problem}[intronum]{Problem}

\theoremstyle{definition}
\newtheorem{defn}[thm]{Definition}
\theoremstyle{definition}

\theoremstyle{definition}

\theoremstyle{definition}
\newtheorem{remark}[thm]{Remark}
\theoremstyle{definition}

\theoremstyle{definition}

\newtheorem{exmp}{Example}
\newtheorem{assume}{Standing Assumption}
\newtheorem{convention}{Convention}

\newcommand*\image{\operatorname{Im}}

\newcommand*\diam{\operatorname{diam}}
\newcommand*\rank{\operatorname{rank}}
\newcommand*\nest{\sqsubseteq}
\newcommand*\propnest{\sqsubsetneq}

\DeclareMathOperator{\hull}{hull}
\newcommand{\dist}{\textup{\textsf{d}}}
\newcommand*\gate{\mathfrak{g}}
\newcommand*\fgate{\mathfrak{f}}
\newcommand*{\orthgate}{\mathfrak e}

\newcommand{\fbar[1]}{\overline{\mathfrak{#1}}}

\newcommand{\tsh}[1]{\left[#1\right]}
\newcommand{\Tsh}[2]{\tsh{#2}_{#1}}

\newcommand{\orth}{\bot}
\newcommand{\transverse}{\pitchfork}
\newcommand{\neb}{\mathcal N}

\newcommand{\ignore}[2]{\Tsh{#2}{#1}}

\newcommand{\stab}{\operatorname{Stab}}

\newcommand{\growth}{\mathrm{Gr}}
\newcommand{\relevant}{\mathrm{Rel}}

\newcommand{\factored}[1]{\widehat{#1}}
\newcommand{\naturals}{\mathbb N}
\newcommand{\reals}{\mathbb R}
\newcommand{\integers}{\mathbb Z}

\newcommand{\boldmu}{\boldsymbol{\mu}}

\newcommand{\hatcone}{\mathbf{\widehat{X}}}
\newcommand{\tcone}{\mathbf{\widehat{T}}}

\newcommand{\partition}{\mathrm{Part}}
\newcommand{\orthnum}{\#_{\orth}}

\newcommand{\fcone}{\mathbf {\hat f}}

\newcommand{\bC}{\mathbf C}
\newcommand{\bp}{\mathbf p}
\newcommand{\bo}{\mathbf o}
\newcommand{\cdist}{\mathbf{\hat{d}}}
\newcommand{\ctreedist}{\mathbf d}
\newcommand{\bD}{\mathbf D}

\newcommand{\bU}{\mathbf U}
\newcommand{\bV}{\mathbf V}

\newcommand{\jacobian}[1]{\mathrm{J}_{#1}}
\newcommand{\leb}{\mathrm{Leb}}
\newcommand{\haus}{\mathrm{Haus}}
\newcommand{\boundary}{\partial}

\newcommand{\boldgamma}{\boldsymbol{\gamma}}

\newcommand{\asdim}{\mathrm{asdim}}

\newcommand{\MCG}{\mathcal{MCG}}

\newcommand{\DF}{\tau}

\newcommand{\bushy}{\mathfrak S_{\clubsuit}}

\newenvironment{proofofclaim}[1]{
	\begin{proof}[Proof of Claim~#1] }
	{\end{proof}}

\title{Coarse embeddings of products of trees as quasi-isometry invariants}
\author{Mark Hagen}
\address{School of Mathematics, University of Bristol, Bristol, UK}
\email{markfhagen@posteo.net}
\author{Alessandro Sisto}
	\address{Maxwell Institute and Department of Mathematics, Heriot-Watt University,     Edinburgh, UK}
	\email{a.sisto@hw.ac.uk}

\begin{document}

\maketitle

\begin{abstract}
    We consider the maximal number of factors of a product of bushy trees that can be quasi-isometrically, or even coarsely embedded into various groups of interest, including mapping class groups, Torelli groups, Johnson kernels, surface braid groups, and Bestvina-Brady groups. We use this to quasi-isometrically distinguish groups from the above classes, and also to rule out coarse embeddings between them. All these are applications of general statements about coarse embeddings of products of bushy trees into hierarchically hyperbolic spaces.
\end{abstract}


\section{Introduction}\label{sec:intro}
The maximal rank of quasiflats contained in a given space is a well-studied and powerful quasi-isometry invariant, see e.g. \cite{KKL,BehrstockMinsky,EMR,Bowditch:WP} and many others. This quantity is not monotonic under passing to subgroups, so it is natural to look for an invariant of a similar flavour that is. One such invariant is the maximal dimension of coarsely embedded flats, since (finitely generated) subgroups are coarsely embedded in the ambient group. This is, however, hard to control since, for instance, it can be arbitrarily large for a hyperbolic group (because $\mathbb H^n$ contains coarsely embedded copies of $\mathbb R^{n-1}$ as horospheres), which is an undesirable behaviour. In this paper we propose instead the study of coarsely embedded products of free groups/bushy trees. A product of two bushy trees cannot coarsely embed in a hyperbolic group \cite{coarse_emb_into_hyp}, already suggesting that this is a better-behaved invariant. We compute the maximal number of factors of such a product that coarsely (or, in fact, quasi-isometrically) embeds into various hierarchically hyperbolic groups and subgroups of interest. First, for mapping class groups we show the following, where $\Sigma_{g,p}$ is the closed connected oriented surface of genus $g$ with $p$ punctures.

\begin{restatable}{thmi}{mcg}
\label{thm:mcg_bushy_rank}
Let $g\geq 2, p\geq 0$. Each of the following occurs for a positive integer $n$ if and only if $n\leq \left\lfloor \frac{3g+p-2}{2}\right\rfloor$.
\begin{enumerate}
\item $F_2^n$ coarsely embeds in $\MCG(\Sigma_{g,p})$,
\item $F_2^n$ quasi-isometrically embeds in $\MCG(\Sigma_{g,p})$,\label{item:qie-mcg}
\item $\MCG(\Sigma_{g,p})$ contains a quasi-isometrically embedded subgroup isomorphic to $F_2^n$,
\item $\MCG(\Sigma_{g,p})$ contains a subgroup isomorphic to $F_2^n$.
\end{enumerate}
\end{restatable}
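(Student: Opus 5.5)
Since (4)$\Rightarrow$(3)$\Rightarrow$(2)$\Rightarrow$(1) are essentially formal, I will prove two things: the realization statement (4)/(3) for $n\le \lfloor (3g+p-2)/2\rfloor$, and the obstruction (1) fails for $n>\lfloor (3g+p-2)/2\rfloor$. Here (3)$\Rightarrow$(2) is immediate, and (2)$\Rightarrow$(1) holds because quasi-isometric embeddings are coarse. For (4)$\Rightarrow$(3), note that by McCarthy's Tits alternative every subgroup of $\MCG$ is virtually abelian or contains $F_2$. I would get (3) directly from the same construction used for (4), not from (4) as an abstract implication. So the real content is the construction and the obstruction.

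\emph{Construction.} I choose a family of pairwise disjoint, pairwise non-isotopic essential subsurfaces $Y_1,\dots,Y_n\subset\Sigma_{g,p}$, each of complexity $\xi(Y_i)\ge 1$, so that each is a one-holed torus or a four-holed sphere (possibly with punctures). In each $Y_i$ I take two pseudo-Anosovs, or high powers of them, generating a free group $F^{(i)}\cong F_2$ that is quasi-isometrically embedded in $\MCG(Y_i)$ and undistorted in $\MCG(\Sigma)$. Such free groups come from Schottky/ping-pong on the curve graph of $Y_i$. The product $\prod_i F^{(i)}$ is then a subgroup isomorphic to $F_2^n$, and it is quasi-isometrically embedded by the distance formula of Masur--Minsky.

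It remains to count. I want the maximal number of disjoint complexity-one subsurfaces. A pants decomposition has $3g-3+p$ curves and $2g-2+p$ pants. Each complexity-one piece is a union of two pants or a pant glued to itself, so it uses at least 1 ``pant-worth'' out of $2g-2+p$... This count does not give the stated formula. The right count should be: each $Y_i$ has Euler characteristic $\le -1$, and we need $\sum|\chi(Y_i)|$ plus boundary constraints. One-holed tori have $\chi=-1$, so we can take $g$ of them, using $g$ genus. The remaining sphere with $g+p$ boundary components and punctures, of $\chi=2-g-p$, can hold $\lfloor (g+p-2)/2\rfloor$ four-holed spheres, each using two pants. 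The total is $g+\lfloor (g+p-2)/2\rfloor=\lfloor (3g+p-2)/2\rfloor$. I will verify this by an explicit picture, and use $g\ge2$ to ensure everything is essential and non-annular.

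\emph{Upper bound (main obstacle).} Here I use the HHS structure on $\MCG$ and the general result the paper promises for coarse embeddings of products of bushy trees into HHSs. Such an embedding should force $n$ pairwise orthogonal domains $U_1,\dots,U_n$, each with unbounded, non-quasi-line ``bushy'' behaviour. In particular none of them is an annulus, since annular curve graphs are quasi-lines and a bushy tree cannot coarsely embed in a quasi-line compatibly. Heuristically, each tree factor must ``live'' in some hyperbolic space, by a coarse version of the fact that a product of bushy trees doesn't embed in a hyperbolic space together with a pigeonhole argument on the factors. The hard step is proving this reduction from a coarse embedding to $n$ pairwise orthogonal non-annular domains, which I would attempt by induction on $n$ using coarse median/asymptotic cone arguments. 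After that, the remaining task is the combinatorial bound: the number of pairwise disjoint non-annular essential subsurfaces is at most $\lfloor(3g+p-2)/2\rfloor$. I would prove this by an Euler characteristic count. Complete the disjoint subsurfaces to a pants decomposition. Each one-holed torus contributes one genus, and every other piece, and every complementary region of the remaining sphere-like part, needs at least two pants, which yields the floor expression.
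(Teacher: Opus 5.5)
Your proposal follows the paper's proof. The lower bound comes from free groups supported on $\lfloor(3g+p-2)/2\rfloor$ disjoint complexity-one subsurfaces, combined via the distance formula. The upper bound is the bushy-orthogonality result (Corollary~\ref{cor:HHG-version}, i.e.\ Corollary~\ref{cor:bushy-QR} applied to the cobounded HHG structure on $\MCG$) together with the Euler characteristic count of Lemma~\ref{lem:Euler}.

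One small correction: (4)$\Rightarrow$(3) is not formal, and McCarthy's Tits alternative does not give it. What you actually need to rule out (4) for $n>\lfloor(3g+p-2)/2\rfloor$ is (4)$\Rightarrow$(1). This holds directly, because the inclusion of any finitely generated subgroup is a coarse embedding.
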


The bound $\left\lfloor \frac{3g+p-2}{2}\right\rfloor$ represents the cardinality of a maximal collection of disjoint complexity-1 subsurfaces of $\Sigma_{g,p}$. This is strictly lower than the quasiflats rank of $\MCG(\Sigma_{g,p})$, which is $3g+p-3$ as computed in \cite{BehrstockMinsky}.

Item \eqref{item:qie-mcg}, on quasi-isometric embeddings, is already new. As mentioned, the main motivation is to study subgroups, especially ones whose geometry is poorly understood, and in the mapping class group context we can prove the following:

\begin{restatable}{thmi}{torellijohnsonthm}
\label{thm:Torelli_Johnson}
 Let $\mathcal I_g$ be the Torelli group of genus $g$, and let $\mathcal J_g$ be the Johnson kernel. For $g\geq 3$, $g-1$ is both the maximal $n$ such that $F_2^n$ coarsely embeds in $\mathcal I_g$, and the maximal $n$ such that $F_2^n$ coarsely embeds in $\mathcal J_g$.
\end{restatable}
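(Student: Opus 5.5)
The plan is to prove the two inequalities separately. For the lower bound, I will exhibit a subgroup isomorphic to $F_2^{g-1}$ inside $\mathcal J_g\subseteq \mathcal I_g$ that is undistorted in $\MCG(\Sigma_g)$. For the upper bound, I will reduce to $\mathcal I_g$ and then to a counting problem about disjoint subsurfaces on which the Torelli group is "visible".

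\textbf{Metric convention.} Throughout I use proper left-invariant metrics on the (countable) groups $\mathcal I_g,\mathcal J_g$. With such metrics, inclusions of subgroups are coarse embeddings, and finitely generated subgroups are coarsely embedded. This avoids any finite generation question for $\mathcal J_3$.

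\textbf{Lower bound.} Cut $\Sigma_g$ along $g-1$ nonseparating curves arranged cyclically, so that it decomposes into $g-1$ copies $P_1,\dots,P_{g-1}$ of $\Sigma_{1,2}$ glued in a cycle. The Euler characteristic is $-2(g-1)$ and the cycle adds one to the genus, so this is indeed $\Sigma_g$. In each $P_i$, choose two curves $a_i,b_i$ that each bound a genus-one subsurface with one boundary component inside $P_i$ and that together fill $P_i$. Each such curve is separating in $\Sigma_g$, so Dehn twists about it lie in $\mathcal J_g$ by Johnson's theorem. For large $k$, the twists $T_{a_i}^k,T_{b_i}^k$ generate a free group $F_2$ supported on $P_i$. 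Since the $P_i$ are disjoint, these free groups commute and generate $F_2^{g-1}\le\mathcal J_g$. By the distance formula in $\MCG(\Sigma_g)$, this subgroup is quasi-isometrically embedded: the factor in $P_i$ is seen by $\mathcal C(P_i)$ and the annuli inside it. Hence $F_2^{g-1}$ coarsely embeds in $\mathcal J_g$, and therefore in $\mathcal I_g$.

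\textbf{Upper bound: reduction to projections.} Because $\mathcal J_g\le\mathcal I_g\le\MCG(\Sigma_g)$ are coarse embeddings, it suffices to show that $F_2^g$ does not coarsely embed in $\mathcal I_g$. Suppose $f\colon F_2^n\to \mathcal I_g$ is a coarse embedding. I will apply the general HHS statement about coarsely embedded products of bushy trees (which the paper proves later and I take as given) to $f$ viewed as a map into $\MCG(\Sigma_g)$. This should produce $n$ pairwise disjoint subsurfaces $S_1,\dots,S_n$, each of complexity at least one, such that the image of $f$ has unbounded, in fact bushy, projection to each $\mathcal C(S_i)$. The image lies in $\mathcal I_g$, so these unbounded projections are realised by orbits of $\mathcal I_g$, or at least by differences $h^{-1}h'$ with $h,h'\in\mathcal I_g$, which remain in $\mathcal I_g$.

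\textbf{Upper bound: exclusion and counting.} Next I rule out $S_i\cong\Sigma_{1,1}$ whose boundary is separating in $\Sigma_g$. In such a piece, isotopy classes of curves are determined by their homology classes. A Torelli element either does not preserve $S_i$, in which case its projection to $\mathcal C(S_i)$ is controlled by the boundary, or it acts trivially on $\mathcal C(S_i)$. Either way, Torelli has bounded projection to $\mathcal C(S_i)$. I will make this precise by showing that the projection of $h\cdot x$ to $\mathcal C(S_i)$ is uniformly close to that of $x$. Finally I count: any collection of disjoint essential subsurfaces of complexity at least one, none of which is a one-holed torus with separating boundary, has at most $g-1$ members. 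To prove this, pass to minimal subsurfaces (complexity one, i.e.\ $\Sigma_{0,4}$ or $\Sigma_{1,1}$) and compare Euler characteristic with the homological rank $g$. A $\Sigma_{1,1}$ with nonseparating boundary, or a $\Sigma_{0,4}$, forces extra non-separating gluing, which costs genus. This gives $n\le g-1$.

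\textbf{Main obstacle.} The hardest step is extracting from an arbitrary coarse embedding into $\mathcal I_g$, a subgroup that is badly distorted in $\MCG$, the fact that each relevant domain $S_i$ has unbounded projection of $\mathcal I_g$ itself, rather than merely of the image of $f$. The first thing I would try is to use the bushiness in the general statement: it provides many pairs of image points with large projection in $S_i$ and bounded projection elsewhere. Their "quotient" elements $h^{-1}h'\in\mathcal I_g$ then witness unbounded Torelli projection to $\mathcal C(S_i)$, which contradicts the exclusion lemma when $S_i$ is a separating-boundary one-holed torus.
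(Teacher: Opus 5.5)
Your lower bound is fine. The $g-1$ copies of $\Sigma_{1,2}$ work as well as the paper's $g-1$ four-holed spheres. Undistortedness in $\MCG(\Sigma_g)$ is not even needed, since a finitely generated subgroup is automatically coarsely embedded.

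\textbf{The upper bound rests on a false lemma.} The Torelli group does \emph{not} have bounded projection to $\mathcal C(S)$ for a one-holed torus $S$. Take $\psi$ supported on $S$ and pseudo-Anosov there, a separating curve $c_0$ crossing $\partial S$, a base marking $\mu_0$, and $h=T^N_{\psi^m(c_0)}\in\mathcal J_g\subseteq\mathcal I_g$ with $N$ large compared to $m$. By the Behrstock inequality, $\pi_S(h\mu_0)$ is uniformly close to $\pi_S(\psi^m(c_0))$, which leaves every bounded set as $m\to\infty$.

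So your case ``$h$ does not preserve $S_i$, hence is controlled by the boundary'' fails. As a result, unboundedness of the projection of $f(F_2^n)$ to some $\mathcal C(S_i)$ yields no contradiction at all. Your correct observation that Torelli elements stabilising $S_i$ act trivially on $\mathcal C(S_i)$ does not rescue the argument, because two different kinds of pairs are in play:
\begin{itemize}
\item the differences $h^{-1}h'$ of image points are Torelli, but need not stabilise anything;
\item the pairs that actually witness the large $\mathcal C(S_i)$--distance are gates of image points into the product region $P_{S_i}$, whose differences stabilise $S_i$ but are only sublinearly close to $\mathcal I_g$.
\end{itemize}

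\textbf{What is missing} is a quantitative, linear-versus-sublinear statement localised at product regions. This is exactly incompatibility (Definition \ref{defn:incompatible}). The paper proves in Lemma \ref{lem:which-matrix-norm} that one-holed tori are incompatible with $\mathcal I_g$, using the symplectic representation $\phi$.
\begin{itemize}
\item For $d_n=x_n^{-1}y_n\in\stab(T)$ with $\mathcal C(T)$--displacement $r_n$, Hempel's bound makes $i(\alpha,d_n\alpha)$ exponential in $r_n$, and this intersection number is a matrix entry of $\phi(d_n)$.
\item On the other hand, write $x_n=t_nu_n$ and $y_n=t'_nu'_n$ with $t_n,t'_n\in\mathcal I_g$ and word lengths $|u_n|,|u'_n|=o(r_n)$. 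Then $\phi(d_n)=\phi(u_n)^{-1}\phi(u'_n)$, so $\log\|\phi(d_n)\|=o(r_n)$, because $\log\|\phi(\cdot)\|$ is linearly bounded by word length.
\end{itemize}
Corollary \ref{cor:incompatible} then uses the \emph{bilipschitz} conclusion of Theorem \ref{thm:main}, not mere unboundedness. Together with sublinear contraction of projections away from product regions, it produces such sequences if some support domain of the limiting flat were a one-holed torus.

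\textbf{On your count.} Every one-holed torus in a closed surface has separating boundary, so that qualifier is vacuous. Also, passing to minimal subsurfaces can create one-holed tori (inside $\Sigma_{1,2}$, for instance). Instead, use $\chi(S)\leq -2$ for every complexity-$\geq 1$ surface other than $\Sigma_{1,1}$; since $\chi(\Sigma_g)=2-2g$, this gives $n\leq g-1$ directly.
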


Note that the bounds given by Theorem \ref{thm:Torelli_Johnson} are lower than the bounds for the ambient mapping class groups, so additional arguments are required, as discussed below Corollary~\ref{cor:bushy-QR}.

Theorem \ref{thm:Torelli_Johnson} can be used to distinguish quasi-isometry classes of Torelli groups and Johnson kernels. Torelli groups of different genera are distinguished up to quasi-isometry by their (virtual) cohomological dimension (see argument below), and same for Johnson kernels, but we are not aware of any other known obstruction before this paper. For instance, while there are bounds on their asymptotic dimension coming from bounds on the asymptotic dimension of mapping class groups (the first ones from \cite{BBF}, the current best ones from \cite{HHS_III}), these are too coarse to distinguish all possible pairs. Combining our results with virtual cohomological dimension considerations we obtain:

\begin{cori}
    No Torelli group is quasi-isometric to any Johnson kernel.
\end{cori}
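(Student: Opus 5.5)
The plan is to combine two quasi-isometry invariants: the maximal bushy rank from Theorem~\ref{thm:Torelli_Johnson}, and virtual cohomological dimension.

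\textbf{Step 1: fix the genera.} Suppose, for a contradiction, that $\mathcal I_g$ is quasi-isometric to $\mathcal J_h$.

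\textbf{Step 2: use the coarse invariant.} Let $b(G)$ be the maximal $n$ such that $F_2^n$ coarsely embeds in $G$. Since a quasi-isometry is a coarse embedding with a quasi-inverse, and composites of coarse embeddings are coarse embeddings, $b$ is a quasi-isometry invariant. For $g,h\ge 3$, Theorem~\ref{thm:Torelli_Johnson} gives $b(\mathcal I_g)=g-1$ and $b(\mathcal J_h)=h-1$, so $g=h$.

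\textbf{Step 3: the low-genus cases.} I would treat these separately.
\begin{itemize}
\item For $g=1$, $\mathcal I_1$ and $\mathcal J_1$ are trivial, so we should restrict to genera for which the groups are defined and nontrivial, or simply note the statement concerns $g\ge 2$.
\item For $g=2$, $\mathcal I_2$ is free of infinite rank by Mess, and $\mathcal J_2=\mathcal I_2$. These are not finitely generated, so quasi-isometry is not meaningful. I would state the corollary for $g\ge 3$, where $\mathcal I_g$ is finitely generated by Johnson.
\item Finite generation of $\mathcal J_g$ is known for $g\ge 4$ (Ershov--He, Church--Ershov--Putman), and this is presumably what the paper assumes.
\end{itemize}
If one of the two groups is in genus $2$ and the other in genus at least $3$, then quasi-isometry is not even defined. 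So we reduce to $g=h\ge 3$, with both groups finitely generated.

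\textbf{Step 4: compare cohomological dimensions.} I expect this to be the main obstacle: distinguishing $\mathcal I_g$ from $\mathcal J_g$ in the same genus. I would use virtual cohomological dimension, as the paper signals. Both groups are torsion-free, so vcd equals cd. Bestvina--Bux--Margalit computed
\[
\mathrm{cd}(\mathcal I_g)=3g-5 \quad\text{and}\quad \mathrm{cd}(\mathcal J_g)=2g-3.
\]
These agree only when $g=2$, so they differ for $g\ge 3$.

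\textbf{Step 5: invariance of cd.} Cohomological dimension is a quasi-isometry invariant among groups of type $F$ (Gersten, Sauer). It is also invariant among finitely generated groups whose cd is finite and which admit suitable finiteness properties (Sauer's result for amenable groups does not apply here). Since $\mathcal J_g$ and $\mathcal I_g$ are not known to be of type $F$, I would check what hypothesis the cited argument ("see argument below") uses. The fallback I would try first is Sauer's theorem that cohomological dimension over $\mathbb Q$ is a coarse-embedding/quasi-isometry invariant for groups with finite cd under coarse equivalence, or the lower bound $\mathrm{cd}\ge$ the dimension of a coarsely embedded contractible complex. If needed, I would instead use a uniformly contractible model together with the coarse monotonicity of cohomological dimension for countable groups of finite cd (Sauer, ``Homological invariants and quasi-isometry''), which requires no finiteness beyond finite generation.

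With $g=h\ge3$ and $\mathrm{cd}(\mathcal I_g)\neq\mathrm{cd}(\mathcal J_g)$, this contradicts the assumed quasi-isometry.
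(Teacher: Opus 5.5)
Your proposal is correct and is essentially the paper's proof. Theorem~\ref{thm:Torelli_Johnson} forces $g=h$, and then the Bestvina--Bux--Margalit values $\mathrm{cd}(\mathcal I_g)=3g-5\neq 2g-3=\mathrm{cd}(\mathcal J_g)$ contradict the quasi-isometry invariance of (virtual) cohomological dimension among groups of finite vcd, which is the result the paper cites; of the options in your Step 5, this finite-vcd version (Sauer) is the one to use, since the type-$F$ version does not apply ($\mathcal I_g$ has infinitely generated top homology, so it is not of type $F$).
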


\begin{proof}
    By Theorem \ref{thm:Torelli_Johnson}, if $\mathcal I_g$ was quasi-isometric to $\mathcal J_h$, then necessarily $g=h$. However, $\mathcal I_g$ and $\mathcal J_g$ have different cohomological dimensions \cite{dimension_torelli}, so they cannot be quasi-isometric by \cite[Theorem 1.2]{qi_vcd}.
\end{proof}

Other interesting subgroups of mapping class groups include surface braid groups. For those we do not compute the optimal number of factors, but using estimates we are still able to prove the following:

\begin{restatable}{thmi}{braids}
    For all $n\geq 5$, $B_n(\Sigma_2)$ is not quasi-isometric to $B_n(\Sigma_n)$.
\end{restatable}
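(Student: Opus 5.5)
Since the maximal $k$ such that $F_2^k$ coarsely embeds in a group is a quasi-isometry invariant, the plan is to show that this number differs for $B_n(\Sigma_2)$ and $B_n(\Sigma_n)$: a lower bound for one group, an upper bound for the other, and a gap between them. The map from a group to itself given by a quasi-isometry composed with a coarse embedding is again a coarse embedding. So if $B_n(\Sigma_2)$ and $B_n(\Sigma_n)$ were quasi-isometric, they would have the same maximal bushy rank.

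\emph{Upper bound for $B_n(\Sigma_2)$.} Via the Birman exact sequence, $B_n(\Sigma_g)$ (say for $g\ge 2$) sits inside $\MCG(\Sigma_{g,n})$ as the kernel of the forgetful map $\MCG(\Sigma_{g,n})\to\MCG(\Sigma_g)$. Being a subgroup, it is coarsely embedded. By Theorem~\ref{thm:mcg_bushy_rank}, any $F_2^k$ coarsely embedded in $B_n(\Sigma_2)$ therefore has
\[
k\le \left\lfloor \frac{3\cdot 2+n-2}{2}\right\rfloor=\left\lfloor \frac{n+4}{2}\right\rfloor .
\]

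\emph{Lower bound for $B_n(\Sigma_n)$.} Here I would build many commuting free subgroups explicitly. Choose $n$ pairwise disjoint subsurfaces of $\Sigma_n$, each a one-holed torus, and place exactly one marked point in each. Point-pushing the $i$-th marked point around loops inside the $i$-th torus gives a free group $\pi_1(T^2\setminus\text{disc})\cong F_2$. Pushes supported in disjoint subsurfaces commute, so these free groups generate $F_2^n\le B_n(\Sigma_n)$. In $\MCG(\Sigma_{n,n})$ each factor lives in a complexity-one subsurface, namely a once-punctured torus with boundary, whose curve graph is a Farey-type graph. The disjoint subsurfaces give an orthogonal family. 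I would then use the general HHS criterion underlying Theorem~\ref{thm:mcg_bushy_rank}, namely that a product of free groups acting loxodromically and independently on pairwise orthogonal domains is quasi-isometrically embedded, to conclude that this $F_2^n$ is undistorted in $\MCG(\Sigma_{n,n})$, hence coarsely embedded in $B_n(\Sigma_n)$. In fact coarse embeddedness only requires that $F_2^n$ be a subgroup, so this step is automatic.

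\emph{Comparison and obstacle.} The two bounds give the gap $n>\lfloor (n+4)/2\rfloor$, which holds exactly when $n\ge 5$. This contradicts equality of the maximal bushy ranks, so the groups are not quasi-isometric. The step I expect to be the main obstacle is making sure the subsurfaces chosen in the lower bound really are disjoint complexity-one pieces, each containing exactly one marked point, and that the point-push groups inside them are genuinely free of rank two. This needs the point-pushing subgroup of a one-holed torus with one puncture to be $\pi_1$ of the one-holed torus, which is free of rank $2$. The upper bound is immediate from Theorem~\ref{thm:mcg_bushy_rank} once the Birman embedding is in place.
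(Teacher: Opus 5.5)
Your proposal is correct and is essentially the paper's argument: $F_2^n\le B_n(\Sigma_n)$ via point-pushes in $n$ disjoint one-holed tori, while $B_n(\Sigma_2)\le\MCG(\Sigma_{2,n})$ together with Theorem~\ref{thm:mcg_bushy_rank} caps coarsely embedded products at $\lfloor (n+4)/2\rfloor<n$. (The paper's proof writes $\MCG(\Sigma_{n,n})$ at this step, which should read $\MCG(\Sigma_{2,n})$, as you have it.)

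You should discard two side remarks, although the argument does not depend on them. The once-punctured one-holed torus has complexity two, not one, so its curve graph is not Farey-type. Also, a point-pushing $F_2$ is exponentially distorted in $\MCG(\Sigma_{n,n})$: conjugate it by a lift of an Anosov map of the torus. So your claim that the $F_2^n$ is ``undistorted'' via the HHS criterion is false. As you yourself note, mere subgroup inclusion already gives the coarse embedding you need.
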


Both the virtual cohomological dimension and the asymptotic dimension of $B_n(\Sigma_g)$ are equal to $n$ \cite{dim_surface_braid}, so these do not provide information in the setting of the theorem.

We can also compare quasi-isometry classes across the types of subgroups that have been mentioned so far.

\begin{restatable}{cori}{torelli_vs_braids}\label{cori:torelli_vs_braids}
\begin{enumerate}
    \item[]
    \item For all $g\geq 6$, $\mathcal I_g$ is not quasi-isometric to any surface braid group.
    \item  No Johnson kernel $\mathcal J_g$, for $g\geq 4$, is quasi-isometric to any surface braid group $B_n(\Sigma_h)$ with $h\geq 3$.
\end{enumerate}   
\end{restatable}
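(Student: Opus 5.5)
The plan is to compare the quasi-isometry invariant from Theorem~\ref{thm:Torelli_Johnson}, the maximal number of bushy factors, with bounds for surface braid groups, and to use cohomological dimension to pin down the parameters. The invariant is $\mathrm{br}(G)=\max\{n : F_2^n \text{ coarsely embeds in } G\}$. It is a quasi-isometry invariant, since a composition of coarse embeddings is a coarse embedding. Theorem~\ref{thm:Torelli_Johnson} gives $\mathrm{br}(\mathcal I_g)=\mathrm{br}(\mathcal J_g)=g-1$. For surface braid groups we need an upper bound on $\mathrm{br}(B_n(\Sigma_h))$. This should come from the estimates used in the braid group theorem. The natural source is the Birman exact sequence / point-pushing description, where $B_n(\Sigma_h)$ sits in $\MCG(\Sigma_{h,n})$, combined with the general HHS bound: bushy factors come from pairwise orthogonal domains whose hyperbolic spaces have bushy boundary behaviour, and these are realised inside the relevant subgroup.

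Next we use dimension to restrict the possible braid groups. The virtual cohomological dimension of $B_n(\Sigma_h)$ is $n$ \cite{dim_surface_braid}. The cohomological dimension of $\mathcal I_g$ is $3g-5$, and that of $\mathcal J_g$ is $2g-3$ \cite{dimension_torelli}. By \cite[Theorem 1.2]{qi_vcd}, a quasi-isometry forces equal (virtual) cohomological dimension. So $\mathcal I_g\simeq_{QI} B_n(\Sigma_h)$ forces $n=3g-5$, and $\mathcal J_g\simeq_{QI} B_n(\Sigma_h)$ forces $n=2g-3$. It remains to show $\mathrm{br}(B_n(\Sigma_h))\neq g-1$ for these $n$.

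For this I would establish a lower bound on $\mathrm{br}(B_n(\Sigma_h))$ of order $n/2$: partition the $n$ points into pairs, put each pair in a disjoint disc or annulus, and take braid-generated free subgroups. Products of pairwise-commuting free groups supported on disjoint subsurfaces give $F_2^k$ undistorted subgroups. For $h\ge 1$ one can also use handles, which gives roughly $\lfloor n/2\rfloor$ or more. In the Torelli case, $n=3g-5$ gives $\lfloor (3g-5)/2\rfloor>g-1$ exactly when $g\ge 6$ or so (check: $g=6$ gives $6>5$). In the Johnson case, $n=2g-3$ gives only $\lfloor(2g-3)/2\rfloor=g-2$, which is not enough. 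There I would exploit $h\ge3$: extra disjoint complexity-one subsurfaces coming from the genus (e.g.\ one-holed tori containing a puncture) should push the count to at least $g$. I would use the configuration of $n$ points distributed in $h$ one-holed tori plus discs, and count disjoint subsurfaces each containing at least one moving point.

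The main obstacle is verifying that these braid subgroups genuinely give quasi-isometrically (coarsely) embedded products. I would use that subsurface braid groups on disjoint subsurfaces commute and are undistorted in $B_n(\Sigma_h)$, via the HHS structure / undistortedness of subsurface stabilisers in $\MCG(\Sigma_{h,n})$. I would also need to check that the arithmetic exactly matches the thresholds $g\ge6$ and $g\ge4$, $h\ge3$. If the lower-bound count falls short, the fallback is an upper bound $\mathrm{br}(B_n(\Sigma_h))<g-1$ in the complementary regime.
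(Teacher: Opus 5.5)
Your strategy is the paper's. The (virtual) cohomological dimension fixes $n=3g-5$, resp.\ $n=2g-3$; Theorem~\ref{thm:Torelli_Johnson} gives the value $g-1$; and an explicit subgroup $F_2^k\le B_n(\Sigma_h)$ with $k\ge g$ gives the contradiction. Only this lower bound is needed, so you can drop the upper-bound discussion in your first paragraph and the ``fallback''. You also do not need undistortedness: any finitely generated subgroup of a finitely generated group is coarsely embedded, and coarse embeddings are all Theorem~\ref{thm:Torelli_Johnson} concerns.

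The gap is the construction itself, which is the whole content of part (2) (the paper's Lemma~\ref{lem:braid_prod_free}). You leave it at ``should push the count to at least $g$'', and the building blocks you name do not work.

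\begin{itemize}
\item Two points in a disc give only $B_2\cong\mathbb Z$, so they contribute no $F_2$ factor. The pieces that do carry a free subgroup of the braid group are: an annulus containing two points (a $4$--holed sphere); a pair of pants or a one-holed torus containing one point (point-pushing); or a disc containing three points.
\item With ``$h$ one-holed tori plus discs'' and $h=3$, you get only $3+\lfloor (2g-6)/3\rfloor<g$ factors for $g\ge 4$.
\item The configuration that works is: three one-holed tori, each containing one point; the remaining $n-3$ points in pairs in disjoint annuli; and a leftover point, if any, in a pair of pants. This gives $2+\lceil (n-1)/2\rceil$ commuting free factors, which is exactly $g$ for $n=2g-3$.
\end{itemize}

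The same care is needed in part (1). Two-point annuli arranged cyclically around a handle do give $\lfloor n/2\rfloor$ factors when $h\ge 1$, and this exceeds $g-1$ for $n=3g-5$, $g\ge 5$. On $S^2$, however, the chain has two disc ends and yields only $\lfloor (n-2)/2\rfloor$, which is optimal there. For $g=6$, $n=13$ this equals $g-1$, so your count with $n=3g-5$ does not rule out $B_{13}(S^2)$. The sphere must be treated separately, using that its virtual cohomological dimension is $n-3$ rather than $n$; this shifts $n$ up by $3$ and restores the inequality.
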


Our results do not only apply to subgroups of mapping class groups, and for instance we can consider subgroups of right-angled Artin groups such as Bestvina-Brady groups.

\begin{thmi}
\label{thmintro:BB}
    There are pairs of Bestvina-Brady groups with the same finiteness properties, Dehn function, and asymptotic dimension, but different values of $n$ such that $F_2^n$ coarsely embeds.
\end{thmi}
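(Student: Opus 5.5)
To prove Theorem~\ref{thmintro:BB} I will exhibit two explicit flag complexes $L_1,L_2$ such that the Bestvina--Brady groups $BB_{L_1}$ and $BB_{L_2}$ can be compared directly. The finiteness properties of $BB_L$ are governed by the homotopy type of $L$ (Bestvina--Brady): $BB_L$ is of type $F$ if and only if $L$ is contractible. So I will take both $L_i$ contractible, for instance triangulations of a $d$-disc, or cones. Then both groups are of type $F$ with identical finiteness properties.

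The asymptotic dimension of $BB_L$ is $\dim L$ when $L$ is contractible. A convenient route is to note that the asymptotic dimension equals the cohomological dimension $\dim L$ for these groups, using the lower bound from cohomological dimension and the upper bound from $\asdim A_L=\dim L+1$ together with the level-set structure. So it suffices to choose $\dim L_1=\dim L_2$. For the Dehn function I will use known results: for $L$ simply connected and $2$-dimensional, or more generally for flag complexes that are, e.g., joins or suspensions, $BB_L$ has quadratic Dehn function (Dison; Chang for general $L$ with $L$ simply connected, giving at most quartic, and quadratic under mild conditions). I will arrange both $L_i$ to fall into the quadratic regime, for example by making each a cone or ensuring the hypotheses of those results hold.

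The key invariant is the maximal $n$ with $F_2^n$ coarsely embedded. I will use the paper's general HHS machinery for subgroups of right-angled Artin groups. The expected output is that $n$ equals the maximal number of pairwise joined subcomplexes, that is, the maximal size of a join decomposition $J_1*\cdots*J_n\subseteq L$ where each $J_i$ is disconnected, so that $\bigl(F(J_i)\bigr)$ contains a free group. This must then be intersected appropriately with the kernel $BB_L$, losing at most one factor.

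Concretely, I will take $L_1$ to be the cone on the join of $k$ pairs of points, so that it contains a join of $k$ copies of $S^0$, giving $F_2^k$ in $A_{L_1}$. I then need to verify that $F_2^{k-1}$, or $F_2^k$ after using the cone vertex, survives in $BB_{L_1}$. For $L_2$ I will take a contractible flag complex of the same dimension with no large joins, for instance a suitably subdivided (hence non-join) triangulated disc of dimension $k$. Its links force any join of disconnected pieces to be small, so the invariant is $\le 1$ or $2$.

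The main obstacle is the upper bound for $BB_{L_2}$. This requires the paper's general theorem that a coarse embedding of $F_2^n$ into an HHS yields $n$ pairwise orthogonal unbounded domains, relevant to the subgroup. I will apply this to $BB_L$ viewed as coarsely embedded in the RAAG's HHS structure, where domains correspond to subcomplexes and orthogonality corresponds to joins. A care point is that the embedding is only coarse in the subgroup, so I must use the statement that any coarse embedding into a coarsely embedded subspace of an HHS is governed by the ambient orthogonality. I will invoke the main technical result in its form for coarse embeddings, and then do the combinatorics on $L_2$.
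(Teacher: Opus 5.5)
\textbf{Verdict: genuine gap, though an easy one to repair.} Your mechanism for the $F_2^n$ invariant is the paper's. The lower bound comes from products of kernels $\ker(\phi|_{\langle u_i,v_i\rangle})\le BB_L$, and no factor is lost. In fact, for a cone $BB_{\mathrm{cone}(K)}\cong A_K$ via $g\mapsto ga^{-\phi(g)}$, so $BB_{L_1}\cong F_2^k$ outright. The upper bound comes from Corollary~\ref{cor:bushy-QR} applied to the ambient RAAG, into which the finitely generated $BB_L$ coarsely embeds.

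The gap is that you never match the Dehn function and asymptotic dimension of $BB_{L_2}$ with those of $BB_{L_1}\cong F_2^k$. The claims you lean on are false or unsupported:
\begin{enumerate}
\item \emph{Dehn function.} It is not true that simply connected $2$-dimensional $L$ gives a quadratic Dehn function. By Chang's classification, as used in the paper, the Dehn function is quartic as soon as some maximal join in $\Gamma$ has non-simply-connected flag complex. The paper's own $\Gamma'$ is a flag $2$-sphere of exactly this kind.
\item \emph{Asymptotic dimension.} A top simplex and $\asdim A_L=\dim L+1$ only give $\dim L_2\le \asdim BB_{L_2}\le \dim L_2+1$. Appealing to ``the level-set structure'' does not supply the upper bound you need.
\item \emph{Combinatorics of $L_2$.} ``Non-join'' does not rule out induced octahedral $(k-1)$-spheres. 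Your bound ``$\le 1$ or $2$'' already fails for barycentric subdivisions. Write $[0,m]=\{0,\dots,m\}$. In the subdivision of $\Delta^k$, the incomparable pairs $\{[0,2j-2],\,[0,2j-3]\cup\{2j-1\}\}$ for $j\le\lfloor (k+1)/2\rfloor$ span an induced join of $\lfloor (k+1)/2\rfloor$ copies of $S^0$. You only need the invariant to be $<k$, but for an explicit $L_2$ whose other invariants you have actually computed.
\end{enumerate}

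The paper avoids all of this with a modular construction. $\Gamma$ is a wedge of three pieces:
\begin{itemize}
\item a fixed flag $2$-sphere $\Gamma'$, which forces type $F_2$ but not $FP_3$ and a quartic Dehn function;
\item a large clique, which governs the asymptotic dimension;
\item $\Lambda_n$, the cone on the $n$-fold join of $S^0$, which alone carries the $F_2^n$ invariant.
\end{itemize}
So only that last invariant changes with $n$.

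If you are content with type $F$ groups, the cheapest repair of your approach is to make $L_2$ a cone as well. Taking $\mathrm{cone}(C_4)$ and $\mathrm{cone}(C_5)$ gives $F_2\times F_2$ and $A_{C_5}$. Both are of type $F$. Both have quadratic Dehn function, being CAT(0) and containing $\mathbb{Z}^2$. Both have asymptotic dimension $2$. Their invariants are $2$ and $1$, the latter because the pentagon contains no induced square. This proves the statement as phrased. It does not prove the sharper Theorem~\ref{thm:bestvina-brady}, with non-$FP_3$ groups, quartic Dehn function and arbitrarily large asymptotic dimension.
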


Our core technical theorem is Theorem \ref{thm:main}, which, starting with a coarse embedding of $F_2^n$ into a (suitable) HHS $X$, yields a sequence of ``boxes'' in $F_2^n$ which at the level of asymptotic cones give a bilipschitz $n$-flat. To be more precise, this is a bilipschitz $n$-flat in the asymptotic cone of the \emph{factored space} of $X$ which disregards the hyperbolic spaces of the HHS structure that are quasilines, see discussion below. We do not state the theorem here, instead just pointing out its most relevant consequence, on the number of factors of products that coarsely embed in an HHS. For the definition of standard HHS, which covers most natural examples, is Definition \ref{defn:standard-HHS}.

\begin{restatable}{cori}{bushyQR}
Let $(X,\mathfrak S)$ be a standard HHS.  If $f:F_2^k\to X$ is a coarse embedding, then $k$ is at most the maximal cardinality of a collection $\{U_1,\ldots,U_k\}\subset\mathfrak S$ of orthogonal elements such that each $\mathcal CU_i$ is unbounded and not a quasiline.\label{cor:bushy-QR}
\end{restatable}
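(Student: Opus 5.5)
The plan is to derive the corollary from the core technical result, Theorem~\ref{thm:main}, by passing to asymptotic cones and computing the dimension of bilipschitz flats there. Given a coarse embedding $f:F_2^k\to X$, Theorem~\ref{thm:main} gives a sequence of boxes $B_m\subset F_2^k$, products of segments of length $\to\infty$ in each factor. Rescaling by the box sizes and taking ultralimits, their images become a bilipschitz $k$-flat $\Phi:\mathbb R^k\to\hatcone$ in an asymptotic cone of the factored space $\factored X$. In $\factored X$ we have coned off or collapsed every $\mathcal CU$ that is a quasiline, together with the bounded ones. The next step is to record the HHS structure on $\factored X$: its index set is $\mathfrak S$ minus these discarded domains, with the same orthogonality and nesting. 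Standard results then show that the asymptotic cone of a standard HHS bilipschitz embeds in a finite product of tree-graded pieces built from the cones of the $\mathcal CU$'s.

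Next I bound the dimension of bilipschitz flats in $\hatcone$. The known result for HHS asymptotic cones, used for the quasiflat rank theorem of Behrstock--Hagen--Sisto, says that a bilipschitz $\mathbb R^k$ in the cone of an HHS has $k$ at most the maximal number of pairwise orthogonal domains whose cones are not points. I would apply it to $\factored X$. The cone of each surviving $\mathcal CU$ is an $\mathbb R$-tree. The required count is then the maximal size of an orthogonal family of domains with $\mathcal CU$ unbounded and not a quasiline. The key mechanism is a local homology argument: an open set of $\mathbb R^k$ maps into $\hatcone$, and the cone is covered by products of at most $N$ real trees, where $N$ is the maximal orthogonal-family size. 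A set homeomorphic to an open subset of $\mathbb R^k$ cannot lie in a finite union of closed pieces of topological dimension less than $k$, so $k\le N$.

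The main obstacle is the quasiline domains. A quasiline's cone is $\mathbb R$, and such domains genuinely contribute to flats in $X$; for example, annuli contribute Dehn-twist directions. This is exactly why we must work in $\factored X$. The delicate point is that Theorem~\ref{thm:main} produces a flat in the factored cone rather than one degenerating there. I will use that statement as given, since the bushy structure of $F_2$ forbids collapse. A secondary issue is that the standardness hypothesis must make $\factored X$ again an HHS with finite complexity and rank, so that the cone structure theorem applies. I would verify this via Definition~\ref{defn:standard-HHS} and the factored-space construction of \cite{HHS_III}.
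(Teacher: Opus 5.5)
Your argument is correct, but it takes a different and heavier route than the paper.

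The paper's proof is essentially definitional. Theorem~\ref{thm:main} produces not just some bilipschitz flat but a \emph{standard} $k$--flat $\mathbf F$. By construction this is the ultralimit of hierarchy boxes for pairwise orthogonal $U_1^n,\ldots,U_k^n\in\mathfrak S-\mathfrak S_{ql}$. The validity conditions force each $\mathcal CU_i^n$ to be unbounded, hence bushy since $U_i^n\notin\mathfrak S_{ql}$. Fixing one $n$ then gives the required family (Remark~\ref{rem:hedge-flat-dimension}).

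You instead discard this support and keep only a bilipschitz $\reals^k$ in $\hatcone$. To get it, either use $\mathbf F$ itself via Lemma~\ref{lem:hedge-gate-support}, or note that $\fcone|_{\mathbf C}$ is bilipschitz because $\gate\circ\fcone$ is and $\gate$ is lipschitz; the theorem does not assert this for $\fcone$ directly. You then apply the Behrstock--Hagen--Sisto rank theorem to the factored HHS $(\widehat X,\mathfrak S-\mathfrak S_{ql})$ from \cite[Prop.~2.4]{HHS_III}. This is legitimate. Surviving domains keep their $\mathcal CU$ and relations. The first item of Definition~\ref{defn:standard-HHS} turns the $\omega$--unbounded orthogonal domains supplied by the rank theorem into genuinely unbounded ones, which are therefore bushy and not quasilines.

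Your route is more robust: it needs only the weak conclusion \emph{bilipschitz flat in a cone of $\widehat X$}, so it would survive a weaker form of Theorem~\ref{thm:main}. The paper's route needs no external theorem. Moreover, the rank theorem is proved in \cite[Sec.~13]{HHS_I} by a Jacobian-and-zooming argument of the same kind that Section~\ref{sec:induction} already runs, so citing it after Theorem~\ref{thm:main} duplicates work.

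One caution: do not present your \emph{key mechanism} as the proof of that step. The natural covers of the cone by product pieces are uncountable, so no sum theorem for dimension applies to them, and the sketch does not work as stated. The citation is what carries the step.
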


For the applications above, an improvement on the corollary is also required, Corollary \ref{cor:incompatible}, where the coarse embedding is constrained to have image contained in a specified subspace, and certain hyperbolic spaces do not count for the orthogonality number.

Our results, in particular Corollaries \ref{cor:bushy-QR} and \ref{cor:incompatible}, can also be used to obstruct coarse embeddings between various groups. Here are some examples where neither the asymptotic dimension nor the virtual cohomological dimension suffice (or at least are not known to).

The simplest application for RAAGs is the following.

\begin{exmp}
    The RAAG on a square does not coarsely embed into the RAAG on a pentagon.  
\end{exmp}

The corresponding statement for quasi-isometric embeddings is a special case of \cite[Corollary D]{qie_raag_rigid} and \cite[Corollary 1.6]{poly_hyp}. It is an interesting problem to explore obstructions for coarse embeddings of RAAGs; this is also considered in \cite{coarse_sep_raag}.

In the next two examples, we use the following virtual cohomological dimension and asymptotic dimension facts: $vcd(\MCG(\Sigma_{g}))=4g-5$ for $g\geq 2$, and $vcd(\MCG(\Sigma_{g,q}))=4g-4+q$ for $q\geq 1,g\geq 1$, and $\MCG(\Sigma_{0,p})=p-3$ by \cite{MCGVCD}, and $asdim(\MCG_{0,p})=p-3$ by \cite[Cor. 5]{BellFujiwara}.

\begin{exmp}
Let $g\geq 2,\ p\geq 4$ and $q\ge 0$ satisfy $3g+q<p\leq 4g-2$.  Then $\MCG(\Sigma_{0,p})$ does not coarsely embed in $\MCG(\Sigma_{g,q})$, but this cannot be determined from virtual cohomological dimension or asymptotic dimension considerations since  $\asdim(\MCG(\Sigma_{g,q}))\geq \asdim(\MCG(\Sigma_{0,p}))$
and $vcd(\MCG(\Sigma_{g,q}))\geq vcd(\MCG(\Sigma_{0,p}))$.
\end{exmp}

\begin{exmp}
    One cannot coarsely embed $\MCG(\Sigma_{g-1,4})$ in $\MCG(\Sigma_g)$, but virtual cohomological dimension does not show this, since $vcd(\MCG(\Sigma_{g-1,4}))\leq vcd(\MCG(\Sigma_g)).$ The best known asymptotic dimension bounds, from \cite{HHS_III}, are too coarse to yield such a result.
\end{exmp}

Note that $\MCG(\Sigma_{g-1,2})$ quasi-isometrically embeds into $\MCG(\Sigma_g)$ (via the stabiliser of a non-separating curve, which is quasi-isometric to a product where one factor is $\MCG(\Sigma_{g-1,2})$). We do not know about the case of 3 punctures.

\subsection{Outline of paper and proofs}  Section \ref{sec:prelim} contains preliminary material on hierarchical hyperbolicity.  In Section \ref{sec:main-theorem}, we state the main technical theorem and deduce all the results in the introduction as consequences.  In Sections \ref{subsec:counting} and \ref{sec:induction}, we prove Theorem \ref{thm:main}.

First of all, let us recall the construction of factored spaces from \cite{HHS_III}. An HHS $X$ has a ``layered'' structure (for mapping class groups, this is given by subsurfaces and their inclusions), with each layer indexing a set of hyperbolic spaces (for mapping class groups, these curve graphs of subsurfaces). There is a method to form a cone-off $\widehat X$ of $X$, which we call a factored space, and which is still an HHS where our favourite elements of the bottom layer of the hierarchy have been discarded (for the mapping class group, we are discarding annuli, and the factored space is quasi-isometric to the pants graph). We will perform this construction for HHSs where all quasilines for the structure are at the bottom, and can therefore be discarded. Morally, quasilines are not big enough to fully account for coarsely embedded products of bushy trees, so we want to discard them.

We compose a given coarse embedding $f:T^k_3\to X$, where $T_3$ is the regular tree of valence 3, with the map $X\to\widehat X$ to the factored space described above, to get a lipschitz map $\hat f:T^k_3\to\widehat X$ that is no longer a coarse embedding.  Still, the point of Section \ref{prop:counting} is to prove Proposition \ref{prop:counting}, which says that $\hat f$ ``behaves like a quasi-isometric embedding'' in most directions. Since we assume that $X$ has bounded geometry, a simple and known counting argument (exploited also for instance in \cite{coarse_emb_into_hyp}) yield that $f$ behaves like a quasi-isometric embedding in most directions, see Lemma \ref{lem:easy-count}, so what we have to prove is that the natural map $X\to \hat X$ does as well.

Towards this, we would like to count points that are linearly far from the basepoint in $X$ but sublinearly far in $\widehat X$. First observe that, given a basepoint in the standard HHS $X$, there are linearly many points that differ significantly from the basepoint in a given quasiline coordinate and nowhere else; see Lemma \ref{lem:linear} for the refinement of this fact that we will need. More generally, Lemma \ref{lem:sequence} says that one can count the points in question by considering paths $\alpha_0\beta_1\cdots\alpha_{n-1}\beta_n\alpha_n$, where the  $\sum_i|\alpha_i|$ of lengths $|\alpha_i|$ is sublinear, and each $\beta_i$ joins points $x_i,x_{i+1}$ with a similar property to the above observation.  This reduces the question to counting the possible tuples of positive integers that could appear as lengths $|\alpha_i|,|\beta_i|$ for such a path, i.e. to counting integer partitions.  We rely on a nontrivial estimate from the literature, see Lemma \ref{lem:partition-counting} and its proof, which is just sharp enough for our purposes. 

Finally, in Section \ref{sec:induction} we complete the proof of Theorem \ref{thm:main} on coarse embeddings of $T_3^k$, using induction on $k$. The main tools in this section are asymptotic cones, geometric analysis, and basic algebraic topology of manifolds.

Passing to asymptotic cones and keeping Proposition \ref{prop:counting} into account, we obtain a lipschitz map from a product of trees into an asymptotic cone of $\widehat X$ which is ``bilipschitz in many directions''. Geometric analysis is mostly used to find points where certain lipschitz maps, between subspaces of asymptotic cones bilipschitz equivalent to subspaces of $\mathbb R^n$, have full rank Jacobian. We state and prove self-contained results in Section \ref{subsec:gmt-facts} to this end. Changing asymptotic cones, we can upgrade from full rank Jacobians to bilipschitz maps. Identifying suitable subspaces of the relevant products of trees where we can apply the geometric analysis arguments is one of the main challenges in the arguments. Some of the arguments in Section \ref{sec:induction} are taken from \cite[Section 13]{HHS_I}, where quasiflats are studied, but we use substantially more geometric analysis and other tools here because we have to identify suitable subspaces along the way.

\subsection{Problems}
There are four main questions arising from our work that we want to emphasise. The first is about handlebody groups.

\begin{conj}
    The maximal integer $n$ such that $F_2^n$ quasi-isometrically embeds into the handlebody group of genus $g$ is $n=g-1$.
\end{conj}

In order to prove the conjecture, because of Corollary \ref{cor:incompatible}, it suffices to show that the collection of all one-holed tori is incompatible with the handlebody group, as defined in Definition \ref{defn:incompatible}. This would be the same strategy as for Torelli groups.

Secondly, while we obtain bounds in the context of surface braid groups, we do not determine optimal ones. It is then natural to pose the following:

\begin{problem}
    Given a surface braid group, find bounds, ideally optimal ones, on the number of factors of products of free groups that coarsely embed. Use this to distinguish quasi-isometry classes of surface braid groups.
\end{problem}

A more open-ended problem regards groups with proper non-cocompact cubulations. Such groups come with a coarse embedding in the corresponding CAT(0) cube complex, and many CAT(0) cube complexes are HHSs, including convex subcomplexes of universal covers of Salvetti complexes \cite{HHS_I, HagenSusse}. Therefore, it should be possible to study coarse and quasi-isometric embeddings of $F_2^n$ into such groups relying on the results in this paper. In particular, the case of Coxeter groups is of great interest; these are indeed cubulated and the corresponding CAT(0) cube complexes are HHS, see in \cite{NibloReeves:coxeter,HaglundWise:coxeter}.

\begin{problem}
    Study coarse and quasi-isometric embeddings of product of free groups into Coxeter groups.
\end{problem}

Finally, as mentioned above, we want to advertise the following.

\begin{problem}
    Study coarse embeddings between RAAGs.
\end{problem}

\subsection*{Acknowledgments} We would like to thank Sebastian Hensel for very useful discussions and insights about subgroups of mapping class groups. Also, we would like to thank Giorgio Mangioni for useful feedback.

\section{Preliminaries}\label{sec:prelim}

\subsection{Standard HHSs}\label{subsec:standard-bushy-factored}
We mostly follow the treatment of HHSs from \cite{HHS_II,HHS_III} and refer the reader to \cite[Part 3]{CRHK} for additional details on HHS background.

\begin{defn}\label{defn:gromov-product}
Let $(M,d)$ be a metric space and let $m\in M$.  As usual, the \emph{Gromov product} $(-,-)_m:M^2\to \reals_{\geq 0}$ is given by
$$(x,y)_m=\frac{1}{2}\left(d(m,x)+d(m,y)-d(x,y)\right)$$
for $x,y\in M$.
\end{defn}

All of the (Gromov-) hyperbolic spaces in this paper are hyperbolic geodesic spaces.

\begin{defn}[Bushy hyperbolic space]\label{defn:bushy}
An $E$--hyperbolic geodesic space $X$ is \emph{$E$--bushy} (or just \emph{bushy} when $E$ is understood) if, for all $p\in X$, there exist $(1,E)$--quasigeodesic rays $\alpha_0,\alpha_1,\alpha_2$ in $X$ such that $\alpha_i(0)=p$ and, letting $a_i\in\partial X$ be the endpoint of $\alpha_i$, we have $(a_i,a_j)_p\leq E$ whenever $i\neq j$.
\end{defn}

\begin{defn}[Standard HHS, factored space, factored map]\label{defn:standard-HHS}
An HHS $(X,\mathfrak S)$ is \emph{standard} if there is a constant $E$ such that all of the following hold:
\begin{enumerate}
    \item For all $U\in\mathfrak S$, either $\mathcal CU$ is unbounded or $\diam(\mathcal CU)\leq E$.\label{item:standard-unbounded}

    \item For all $U\in\mathfrak S$ such that $\mathcal CU$ is unbounded and $U$ is not $\nest$--minimal, $\mathcal CU$ is an $E$--bushy $E$--hyperbolic space.\label{item:standard-bushy}

    \item The underlying metric space $(X,\dist)$ has bounded geometry.\label{item:standard-bounded-geometry}

    \item For each $U\in\mathfrak S$ such that $\mathcal CU$ is unbounded but not $E$--bushy, the space $\mathcal CU$ is an $(E,E)$--quasiline or an $(E,E)$--quasi-ray.\label{item:standard-quasi-line}

    \item $\pi_U:X\to\mathcal CU$ is $E$--coarsely surjective for all $U\in\mathfrak S$.\label{item:standard-surjective}
\end{enumerate}
Given a standard HHS $(X,\mathfrak S)$, let $\mathfrak S_{ql}$ be the set of $U\in\mathfrak S$ such that $\mathcal CU$ is unbounded but not $E$--bushy. Since $\mathfrak S_{ql}$ consists of $\nest$--minimal elements, from \cite[Prop. 2.4]{HHS_III} we have a factored space $(\widehat X,\mathfrak S-\mathfrak S_{ql})$.  Let $q:(X,\dist)\to(\widehat X,\hat\dist)$ be the set-theoretic identity, which is lipschitz, where $\hat\dist$ is the factored metric from \cite[Prop. 2.2]{HHS_III}.  Given any map $f:A\to X$, let $\hat f=q\circ f$.
\end{defn}

\begin{lemma}
\label{lem:hhg_std}
Let $(G,\mathfrak S)$ be an HHG such that $\stab_G(U)$ acts coboundedly on $\mathcal C U$ for all $U\in\mathfrak S$. Then $G$ has an HHG structure $(G,\mathfrak S')$ which is standard. More precisely, $\mathfrak S'$ is obtained from $\mathfrak S$ by removing a collection of bounded domains.
\end{lemma}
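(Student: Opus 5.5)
We first remove the bounded domains and then check the five conditions of Definition~\ref{defn:standard-HHS} one at a time.

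\textbf{Removing bounded domains.} Since $G$ acts cofinitely on $\mathfrak S$, there are finitely many $G$--orbits of domains. Equivariance gives $\diam \mathcal C(gU)=\diam \mathcal CU$, so there is a uniform $E_0$ such that every $\mathcal CU$ is either unbounded or has diameter at most $E_0$; this is item~\eqref{item:standard-unbounded}. The bounded domains with $U$ non-maximal can be deleted: by the standard fact that removing uniformly bounded domains preserves the HHS axioms (the distance formula, uniqueness, partial realization and bounded geodesic image are insensitive to them, and large links/orthogonality are inherited), $(G,\mathfrak S')$ is still an HHS, where $\mathfrak S'$ is $\mathfrak S$ minus the bounded non-maximal domains. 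We keep the maximal element even if it is bounded, in which case $G$ is finite and everything is trivial. The $G$--action is preserved because the deleted set is $G$--invariant, so $(G,\mathfrak S')$ is an HHG.

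\textbf{Bounded geometry and coarse surjectivity.} Item~\eqref{item:standard-bounded-geometry} holds because $G$ is a finitely generated group with a word metric. For item~\eqref{item:standard-surjective}, fix $U$. Since $\stab_G(U)$ acts coboundedly on $\mathcal CU$ and $\pi_U$ is coarsely equivariant, $\pi_U(\stab_G(U)\cdot 1)$ is coarsely dense in $\mathcal CU$. The constant is uniform over each orbit, hence over all $U$ by cofiniteness.

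\textbf{Bushy versus quasiline.} Let $U$ be unbounded. A hyperbolic space admitting a cobounded isometric action by $\stab_G(U)$ is, by the classification of cobounded actions on hyperbolic spaces, one of the following:
\begin{itemize}
\item bounded (excluded here);
\item quasi-isometric to a line;
\item a space with $|\partial \mathcal CU|\ge 3$, in which case it is non-elementary, has dense loxodromic fixed points, and is bushy. Cobounded actions are never parabolic, and a cobounded action with exactly one limit point yields a quasi-horoball-type space, which cannot happen coboundedly.
\end{itemize}
Bushiness in the third case needs an argument. Pick three loxodromics with distinct axis endpoints and use coboundedness to translate the resulting configuration of rays, giving three $(1,E)$--quasigeodesic rays with pairwise bounded Gromov products at every point. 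The constant $E$ is uniform on the orbit of $U$, hence uniform overall. The quasiline constants are likewise uniform per orbit. This establishes item~\eqref{item:standard-quasi-line}.

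\textbf{Item~\eqref{item:standard-bushy}: the main obstacle.} Item~\eqref{item:standard-bushy} asks that a quasiline $U$ be $\nest$--minimal in $\mathfrak S'$. This is where the work is. Suppose $V\propnest U$ with $\mathcal CV$ unbounded (bounded proper nested domains have been removed). Then $\rho^V_U$ is a bounded set in the quasiline $\mathcal CU$. Acting by $\stab_G(U)$, which acts coboundedly and hence contains a loxodromic $h$ on $\mathcal CU$, the domains $h^nV$ have $\rho^{h^nV}_U$ spread along the line. Partial realization together with bounded geodesic image then produces points whose projections to $\mathcal CU$ have large Gromov products in at least three directions, which I expect to contradict $\mathcal CU$ being a quasiline. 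If this contradiction cannot be made rigorous, the fallback is to invoke the standard modification of the structure, as in \cite{HHS_III}, which replaces such domains appropriately.

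The uniformity of the bushiness constant and this minimality issue are the two places needing care. I expect minimality to be the real difficulty, and it may require the hypothesis in a subtler way than sketched above.
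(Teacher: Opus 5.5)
\textbf{The gap is item (2).} Your handling of items (1), (3), (4) and (5) is fine. For (5) you argue directly from coboundedness of $\stab_G(U)$ and equivariance instead of citing \cite[Rem.~1.3]{HHS_II}; either works. Item (2), however, is the real content of the lemma, and you leave it unproved.

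Your sketch also cannot be completed with the ingredients it uses. It relies only on a loxodromic $h\in\stab_G(U)$ sliding the sets $\rho^{h^nV}_U$ along the line, partial realization, and bounded geodesic image. All of these are present in the comb $\reals\times\{0\}\cup\integers\times\reals$. This is an HHS with domains $U$ and $V_n$ ($n\in\integers$), where:
\begin{itemize}
\item $\mathcal CU=\reals$ records the horizontal coordinate;
\item the teeth $V_n\propnest U$ are pairwise transverse, with $\rho^{V_n}_U=n$;
\item a $\integers$--symmetry translates $\mathcal CU$.
\end{itemize}
Yet $\mathcal CU$ is a line carrying unbounded nested domains. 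What the comb lacks is exactly the hypothesis your sketch never uses: that $\stab(V)$ acts coboundedly on the unbounded space $\mathcal CV$.

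\textbf{The missing idea.} This is how the paper proves item (2), via Lemma~\ref{lem:bush}: the third direction at $\rho^V_U$ has to come from $\mathcal CV$, not from $\mathcal CU$.
\begin{itemize}
\item Take $x$ with $\pi_U(x)$ far from $\rho^V_U$ on a $(1,E)$--quasigeodesic ray $\gamma$ from $p$.
\item Take $a,b\in\stab_G(V)$ with $\pi_V(x),\pi_V(ax),\pi_V(bx)$ pairwise far apart.
\item Bounded geodesic image then forces every $\mathcal CU$--geodesic between two of the resulting projections to pass $E$--close to $\rho^V_U$. This gives three rays with pairwise bounded Gromov products.
\item Coboundedness of $\stab_G(U)$ on $\mathcal CU$ lets you translate $V$ so that $\rho^V_U$ lies near an arbitrary $p$. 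The constants are uniform by cofiniteness.
\end{itemize}
When you write this out, note that $a\gamma$ is a ray in $\mathcal C(aU)$, so $a$ and $b$ must also preserve $U$. Indeed, if $aU\neq U$ then $aU\transverse U$: both contain $V$, and proper nesting between translates contradicts finite complexity. Consistency then places $\pi_U(ax)$ within $2E$ of $\rho^V_U$. So the elements must be taken in $\stab_G(U)\cap\stab_G(V)$, and this is the step that needs care.

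\textbf{Your reduction step is not justified as stated.} Deleting \emph{all} bounded non-maximal domains can destroy the containers of the orthogonality axiom and the domains $T_i$ of the large link axiom. These are frequently bounded, e.g.\ disconnected subsurfaces in $\MCG$ and join subgraphs in RAAGs, so there is no such standard fact. The paper instead appeals to the factored space of \cite[Prop.~2.4]{HHS_III}. This removes only the $\nest$--closed family of domains all of whose nested domains are bounded, which does not change $G$ up to quasi-isometry. It still guarantees an unbounded $V\propnest U$ for every non-minimal $U$, which is all your argument needs.

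Finally, a bounded maximal domain does not force $G$ to be finite: $\integers^2$ with its product structure is a counterexample.
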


\begin{proof}
    Property \ref{item:standard-unbounded} follows from the fact that there are finitely many orbits of domains.  We can also choose $E$ so that $\stab_G(U)$ acts on $\mathcal CU$ $E$--coboundedly.  Property \eqref{item:standard-bounded-geometry} follows since the underlying metric space is just $G$ with a proper left-invariant metric.  Property \eqref{item:standard-surjective} can be assume to hold by \cite[Rem. 1.3]{HHS_II}.  
    
    By \cite[Prop. 2.4]{HHS_III}, we can assume that for all $U\in\mathfrak S$, there exists $V\nest U$ such that $\mathcal CV$ is unbounded, and that if $U$ is not $\nest$--minimal, such $V$ can be chosen with $V\propnest U$.  Let $U$ be such that $\mathcal CU$ is unbounded and not $\nest$--minimal and let $V\propnest U$ have $\mathcal CV$ unbounded.  Then Lemma \ref{lem:bush} implies that $\mathcal CU$ is bushy, giving item \eqref{item:standard-bushy}.

    Finally, if $\mathcal CU$ is unbounded but $U$ is not bushy, then coboundedness implies that $\mathcal CU$ is a quasiline, as required by property \eqref{item:standard-quasi-line}.     
\end{proof}

\begin{lemma}\label{lem:bush}
    Let $(G,\mathfrak S)$ be an HHG such that $\stab_G(U)$ acts $E$--coboundedly on $\mathcal C U$ for all $U\in\mathfrak S$. Whenever $U,V\in\mathfrak S$ are such that $V\propnest U$ and $\mathcal C U,\mathcal C V$ are unbounded, then $\mathcal C U$ is bushy.
\end{lemma}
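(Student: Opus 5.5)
The plan is to use the classification of isometric group actions on hyperbolic spaces. We run it for the action of $\stab_G(U)$ on $\mathcal CU$, which is isometric (HHG equivariance) and $E$--cobounded by hypothesis.

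\textbf{Step 1: from three boundary points to bushiness.} Since $\mathcal CU$ is unbounded and the action is cobounded, the action cannot be elliptic. It also cannot be horocyclic: a cobounded action on an unbounded hyperbolic geodesic space has a loxodromic element. So the action is lineal, focal, or of general type.
\begin{itemize}
\item In the focal and general type cases, $\partial\mathcal CU$ contains at least three points. Fix a basepoint $p$ and three $(1,E')$--quasigeodesic rays from $p$ to three distinct boundary points. Their pairwise Gromov products at $p$ are bounded by some constant.
\item Translating this tripod by elements of $\stab_G(U)$ and using $E$--coboundedness handles every other basepoint. Each $q$ is within $E$ of some $gp$. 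Concatenating a geodesic from $q$ to $gp$ with the translated rays, and straightening, gives rays from $q$ that are $(1,E'')$--quasigeodesic with uniformly bounded pairwise Gromov products.
\end{itemize}
Hence $\mathcal CU$ is $E''$--bushy with $E''$ depending only on $E$ and the HHS constants. This step is routine.

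\textbf{Step 2: ruling out the lineal case.} This is the heart of the proof and uses $V\propnest U$ with $\mathcal CV$ unbounded. Suppose $\mathcal CU$ is a quasiline with two ends $\xi_\pm$. The set $\rho^V_U$ is uniformly bounded, so points of $\mathcal CU$ far from it split into a ``$+$'' side and a ``$-$'' side. Geodesics between two points on the same side avoid the $E$--neighbourhood of $\rho^V_U$. By the bounded geodesic image axiom, $\rho^U_V$ then maps each side into a bounded set $B_\pm\subset\mathcal CV$. By consistency, every $x\in G$ with $\pi_U(x)$ far from $\rho^V_U$ therefore has $\pi_V(x)$ within bounded distance of $B_+\cup B_-$.

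We aim for a contradiction with the unboundedness of $\mathcal CV$ together with coboundedness of $\stab_G(V)$ on $\mathcal CV$.
\begin{itemize}
\item First pick $x\in G$ with $\pi_V(x)$ far from $B_+\cup B_-$. This is possible by coarse surjectivity of $\pi_V$.
\item By the above, $\pi_U(x)$ is then close to $\rho^V_U$.
\item Next pick a loxodromic $g\in\stab_G(U)$ with large translation length. The translates $g^k\rho^V_U=\rho^{g^kV}_U$ are then spread along the quasiline, and each $g^kV\propnest U$ produces its own pair of bounded sets $B^{(k)}_\pm$.
\end{itemize}
The idea is to use large links in $U$, and the fact that the domains $g^kV$ are pairwise non-orthogonal (they are nested in $U$, whose $\mathcal C$ is a quasiline). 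From this we want to show the following: $\mathcal CU$ being a quasiline forces the relevant projections $\pi_{g^kV}$ of a hierarchy path to be uniformly controlled, while realisation lets us choose coordinates in $\mathcal CV$ and $\mathcal C(gV)$ independently and arbitrarily large. Concretely, I would realise a consistent tuple in which the $V$-- and $gV$--coordinates are far from the relevant $\rho$'s. Consistency for the pair $(V,gV)$, which are transverse or nested, then forces $\pi_U$ to lie near both $\rho^V_U$ and $\rho^{gV}_U$. These two sets are far apart, which is a contradiction.

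\textbf{Main obstacle.} The delicate point is Step 2. Elements of $\stab_G(V)$ need not stabilise $U$, so one cannot simply move points of $\mathcal CV$ while keeping control in $\mathcal CU$. I therefore expect the contradiction to come from comparing $V$ with the translates $gV$ via consistency and realisation, as above, rather than from a direct group-action argument. If that fails, the fallback is to exhibit a third boundary point of $\mathcal CU$ directly: build a quasigeodesic ray that travels along a sequence of $\rho^{h_iV}_U$--points chosen by coboundedness to leave the quasiline's two ends, and show this ray is quasigeodesic using bounded geodesic image.
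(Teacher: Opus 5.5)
Your Step 1 is fine. A cobounded action of $\stab_G(U)$ on the unbounded space $\mathcal CU$ is lineal, focal or of general type. The last two give three boundary points and then, after translating by $\stab_G(U)$, uniform bushiness. In the lineal case $\mathcal CU$ is a quasiline. So everything rests on Step 2, and that step has a genuine gap.

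\textbf{Your realisation argument does not produce a contradiction.} For $V\pitchfork gV$, partial realisation applied to $gV$ shows that $\rho^{gV}_V$ lies near $B_+\cup B_-$, and similarly that $\rho^V_{gV}$ lies near the corresponding sets for $gV$. A tuple whose $\mathcal CV$-- and $\mathcal C(gV)$--coordinates are both far from these sets therefore violates the transverse consistency inequality. So it is not a consistent tuple, and the realisation theorem does not apply to it. The sentence ``consistency forces $\pi_U$ near both $\rho^V_U$ and $\rho^{gV}_U$'' only restates that no such point exists. Your fallback is not an argument either.

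\textbf{The paper uses a tripod at $\rho^V_U$ instead.} Take $x$ with $\pi_U(x)$ far from $\rho^V_U$. By coboundedness of $\stab_G(V)$ on $\mathcal CV$, pick $a,b$ so that $\pi_V(x),\pi_V(ax),\pi_V(bx)$ are pairwise far apart. Bounded geodesic image then forces every $\mathcal CU$--geodesic between their $\mathcal CU$--projections through the $E$--neighbourhood of $\rho^V_U$. In your language: if all three projections were far from $\rho^V_U$, two would lie on the same side of the quasiline. Their $\mathcal CV$--projections would then both be near the same $B_\pm$, which is a contradiction.

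\textbf{That tripod argument needs more than the stated hypotheses.} It needs $\pi_U(ax)$ and $\pi_U(bx)$ to stay far from $\rho^V_U$. In the paper's formulation it also needs $a\gamma$ and $b\gamma$ to be rays in $\mathcal CU$ starting near $p$. Both hold if $a,b$ also stabilise $U$, as for mapping classes supported on $V$. The obstruction you flagged, that $\stab_G(V)$ need not stabilise $U$, is real, and the paper's proof tacitly assumes it away when it translates $\gamma$ by $a,b\in\stab_G(V)$.

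\textbf{The quasiline case is not excluded by the hypotheses as stated.} If $\mathcal CU$ is a quasiline, then $\stab_G(U)\cap\stab_G(V)$ coarsely preserves $B_+\cup B_-$, so it has bounded orbits on $\mathcal CV$. Here is an example where this occurs.
\begin{itemize}
\item Let $G=\langle t\rangle\ast F_2$ act on its Cayley tree.
\item Let the top-level space be the Bass--Serre tree of the splitting.
\item Let $\mathcal C(g\langle t\rangle)$ be the $t$--line through $g$, and let $\mathcal C(gF_2)$ be the subtree spanned by $gF_2$. All projections are closest-point projections.
\item Declare $gF_2\propnest g'\langle t\rangle$ whenever the two cosets meet. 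Set $\rho^{gF_2}_{g'\langle t\rangle}$ to be the intersection point, and let $\rho^{g'\langle t\rangle}_{gF_2}$ be the constant map to it.
\end{itemize}
One checks the axioms directly: every consistency constant can be taken to be $0$, and uniqueness follows from the syllable decomposition of geodesics in the tree. So this is an HHG satisfying all the hypotheses. It has $V\propnest U$, with $\mathcal CV$ a $4$--regular tree and $\mathcal CU$ a line. Here $\stab_G(U)\cap\stab_G(V)$ is trivial.

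Step 2 can therefore only be completed with an extra hypothesis. One that suffices is that $\stab_G(U)\cap\stab_G(V)$ has unbounded orbits on $\mathcal CV$; this rules out the quasiline case immediately by the observation above.
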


\begin{proof}
Let $p\in\mathcal CU$.  Using coboundedness, choose $V$ so that $\dist_U(p,\rho^V_U)\leq E$.  Choose $x\in G$ so that $\dist_U(x,\rho^V_U)>100E$ and $x$ lies on some $(1,E)$--quasigeodesic ray $\gamma$ in $\mathcal CU$ starting at $p$.  Since $\stab_G(V)$ acts on $\mathcal CV$ coboundedly and $\mathcal CV$ is unbounded, there exists $a,b\in \stab_G(V)$ such that $\dist_V(gx,hx)>100E$ for distinct $g,h\in\{1,a,b\}$, so by consistency and bounded geodesic image, geodesics in $\mathcal CU$ from $\pi_U(ax)$ to $\pi_U(x)$ must pass $E$--close to $\rho^V_U$, so any two of $\gamma$ and $a\gamma$ and $b\gamma$ are, after modifying uniformly bounded initial segments so that they all start at $p$ (instead of $E$--close), rays witnessing bushyness of $\mathcal CU$.
\end{proof}

\subsection{Product regions, hierarchy intervals, distance formula sum}\label{subsec:products-paths}
Fix an HHS $(X,\mathfrak S)$.  Let $\mu:X^3\to X$ be the coarse median operator from \cite[Sec. 7]{HHS_II}.

\begin{defn}\label{defn:HQC}
A subset $Y\subset X$ is \emph{$K$--quasimedian quasiconvex} if $\dist(\mu(y,y',x),Y)\leq K$ whenever
 $y,y'\in Y$ and $x\in X$.  
\end{defn}

By \cite[Prop. 5.11]{RussellSprianoTran}, quasimedian quasiconvexity is equivalent to the property of \emph{hierarchical quasiconvexity} from \cite[Sec. 5]{HHS_II}.  If $Y\subseteq X$ is $K$--quasimedian quasiconvex, then $\gate_Y:X\to Y$ is the \emph{gate map} from \cite[Sec. 5]{HHS_II}; we will sometimes use the term \emph{(coarse) gate map} to avoid confusion with the conceptually similar but formally different notion of gate maps in median spaces (which we will also use).

\begin{remark}\label{rem:product-region-facts}
We summarise facts we will need about standard product regions in $X$ as follows (see \cite[Sec. 5]{HHS_II} and \cite[Sec. 17]{CRHK}).  We can and shall subsume various constants into one constant $E$.
\begin{enumerate}
    \item For each $U\in\mathfrak S$, the \emph{standard product region} $P_U$ is the set of $x\in X$ such that $\dist_V(x,\rho^U_V)\leq E$ whenever $U\propnest V$ or $U\transverse V$.

    \item For all $W\nest U$ or $W\orth U$, the restriction of $\pi_W$ to $P_U$ is $E$--coarsely surjective.

    \item For each $p\in P_U$, let $F_U^p$ be the set of $x\in P_U$ such that $\dist_W(x,p)\leq E$ for all $W\orth U$, and let $E_U^p$ be the set of $y\in P_U$ such that $\dist_V(y,p)\leq E$ for all $V\nest U$.  Then $P_U$, $F_U^p$, and $E_U^p$ are $E$--quasimedian quasiconvex subsets of $X$.  

    \item For $V\nest U$, the restriction of $\pi_V$  to $F_U^p$ is $E$--coarsely surjective, and the same is true for the restriction of $\pi_W$ to $E_U^p$ whenever $W\orth U$.

    \item The inclusions $F_U^p,E_U^p$ extend to a quasi-isometric embedding $F_U^p\times E_U^p\to X$ whose image is $P_U$ and which is $E$--quasimedian for the product coarse median on the domain.  

    \item If $U_1,\ldots,U_n\in\mathfrak S$ are pairwise-orthogonal, then $\bigcap_{i=1}^nP_{U_i}\neq\emptyset$, this intersection is uniformly hierarchically quasiconvex, and for all $p\in\bigcap_{i=1}^nP_{U_i}$ and $j\leq n$, we have $F_{U_j}^p,E_{U_j}^p\subseteq \bigcap_{i=1}^nP_{U_i}$, and, moreover, the image of $F_{U_j}^p\times E_{U_j}^p\to X$ is contained in $\bigcap_{i=1}^nP_{U_i}$.  
    
    Finally, the maps $F_{U_i}^p\to X$ extend to a uniformly quasimedian, uniform quasi-isometric embedding $\prod_iF^p_{U_i}\to X$ whose image is uniformly hierarchically quasiconvex and contained in $\bigcap_iP_{U_i}$.
\end{enumerate}
We sometimes write $F_U$ (resp. $E_U$) for $F_U^p$ (resp. $E_U^p$) when the particular choice of coarse parallel copy is understood or unimportant.  We let $\fgate_U^p:X\to F_U^p$ be the gate map and $\orthgate_U^p:X\to E_U^p$.  After uniformly enlarging $E$, we can assume that all of these maps are $(E,E)$--coarsely lipschitz.
\end{remark}

\begin{defn}[Hierarchy path]\label{defn:hierarchy-path}
Given a constant $D$, a $D$--\emph{hierarchy path} is a $(D,D)$--quasigeodesic $\gamma:[0,L]\to X$ such that $\dist(\mu(\gamma(i),\gamma(j),\gamma(k)),\gamma(j))\leq D$ whenever $0\leq i\leq j\leq k\leq L$.

There exists a constant $D$, depending only on $E$, such that any two points in $X$ are joined by a $D$--hierarchy path, and if $Y\subseteq X$ is $E$--quasimedian quasiconvex, then any two points in $Y$ are joined by a $D$--hierarchy path in $Y$.
\end{defn}

By \cite[Prop. 1.37]{HHS:quasiflats}, Definition \ref{defn:hierarchy-path} is equivalent (up to uniform change of constants) to the notion of a hierarchy path in \cite[Sec. 4]{HHS_II}.

\begin{defn}[Hierarchy interval]\label{defn:coarse-interval}
By \cite[Lem. 6.2]{HHS_II}, there exists $\theta$ such that the following holds.  Given $x,y\in X$, the set $\hull(x,y)$ of $z\in X$ such that $\dist(\mu(x,y,z),z)\leq \theta$ is an $E$--quasimedian quasiconvex subset that we will call the \emph{hierarchy interval} between $x,y$.
\end{defn}

We now recall the \emph{distance formula} and set related notation.

\begin{defn}[$K$--relevant domains]\label{defn:relevant}
Given $K\geq 0$ and $x,y\in X$, let $$\relevant_K(x,y)=\{U\in\mathfrak S:\dist_U(x,y)\geq M\}.$$
\end{defn}

\begin{defn}[Threshold notation]\label{defn:threshold}
Given real numbers $A,B$, let $\ignore{A}{B}$ if $A\geq B$ and $0$ otherwise.
\end{defn}

\begin{defn}[Distance formula sum]\label{defn:DF-sum}
Let $\lambda:[0,\infty)\to[0,\infty)$ be an affine function and let $\tau\geq 0$.  For $V\in\mathfrak S$, let $\mathfrak S_V=\{U\in\mathfrak S:U\nest V\}$.  For $x,y\in X$, and $V\in\mathfrak S$, set
$$\sigma^{\tau}_{V,\lambda}=\sum_{U\in\mathfrak S_V}\ignore{\lambda(\dist_U(x,y))}{\tau}=\sum_{U\in\mathfrak S_V\cap \relevant_\tau(x,y)}\lambda(\dist_U(x,y)).$$
In contexts where we have fixed $\tau$ and will not vary it, we omit it and write $\sigma^\lambda_{V,\lambda}$.  If $\lambda$ is the identity, we omit it, and if $V$ is the unique $\nest$--maximal element of $\mathfrak S$, we omit it.
\end{defn}

The strengthened distance formula, which is \cite[Thm. 2.9]{CHHS_I}, says that for all $\lambda$, there exists $s_\lambda$, depending only on $\lambda$ and the HHS parameters, such that for all $s\geq s_\lambda$, there exists $\DF(s,\lambda)$ such that for all $V\in\mathfrak S$, all $p\in P_V$, and all $x,y\in F^p_V$, 
$$\frac{\dist(x,y)}{\DF(s,\lambda)}-\DF(s,\lambda)\leq\sigma_V^{s,\lambda}(x,y)\leq \DF(s,\lambda)\cdot \dist(x,y)+\DF(s,\lambda).$$

In particular, the above holds for all $x,y\in X$ when $\sigma_V^{s,\lambda}$ is replaced by $\sigma^{s,\lambda}$.  When $\lambda$ is the identity, we write $\DF(s):=\DF(s,Id)$ to simplify notation, so that $s_{Id}$ and $\DF(s)$ are the constants provided by the original distance formula, \cite[Thm. 4.5]{HHS_II}.

Later, when we work with both $(X,\mathfrak S)$ and a fixed factored space $(\widehat X,\mathfrak S-\mathfrak S_{ql})$, we can and shall assume that the same $s_\lambda$ and constants $\DF(s,\lambda)$ work for both $\dist$ and $\hat\dist$, just by taking the maximum of the constants needed for the two different metrics.

The following observation morally says that as one moves along a hierarchy path from $x$ to $y$, one does not introduce surprising new distance formula terms:

\begin{lem}\label{lem:initial-M}
Let $C\geq 0$ and let $M_1=4\theta+1$.  Let $x,y\in X$ and suppose that $x',y'\in X$ have the property that $\pi_V(x'),\pi_V(y')$ are contained in the $(\theta+C)$--neighbourhood of any $\mathcal CV$--geodesic from $\pi_V(x)$ to $\pi_V(y)$ for all $V\in\mathfrak S$.  Then $\relevant_{2M}(x',y')\subseteq \relevant_M(x,y)$ for all $M\geq M_1+4C$.  
\end{lem}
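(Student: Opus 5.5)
Fix $V\in\mathfrak S$ and $M\geq M_1+4C$, and suppose $V\in\relevant_{2M}(x',y')$, so $\dist_V(x',y')\geq 2M$. The goal is $\dist_V(x,y)\geq M$, which is a statement about a single hyperbolic space $\mathcal CV$; no HHS axioms beyond the hypothesis are needed. Choose a $\mathcal CV$--geodesic $\gamma$ from $\pi_V(x)$ to $\pi_V(y)$, as in the hypothesis. (Strictly, $\pi_V$ takes values in sets of uniformly bounded diameter; I will absorb that diameter into $\theta$, as the paper does implicitly.)

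By hypothesis there are points $a,b\in\gamma$ with $\dist_V(\pi_V(x'),a)\leq\theta+C$ and $\dist_V(\pi_V(y'),b)\leq\theta+C$. The triangle inequality then gives
$$2M\leq \dist_V(x',y')\leq \dist_V(a,b)+2(\theta+C).$$
Since $a,b$ lie on the geodesic $\gamma$, we have $\dist_V(a,b)\leq|\gamma|=\dist_V(x,y)$. Hence
$$\dist_V(x,y)\geq 2M-2\theta-2C.$$

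It remains to check that $2M-2\theta-2C\geq M$, i.e.\ that $M\geq 2\theta+2C$. This follows from $M\geq M_1+4C=4\theta+1+4C$. So $V\in\relevant_M(x,y)$, as required.

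The only delicate point is bookkeeping: the coarse (set-valued) nature of $\pi_V$ and the convention for $\relevant$. Any bounded error of size at most $\theta$ is absorbed by the slack between $4\theta+1$ and the $2\theta$ actually used above, so I expect no real obstacle.
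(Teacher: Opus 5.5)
Your proof is correct and is essentially the paper's argument: both bound $\dist_V(x',y')$ by $\dist_V(x,y)$ plus the neighbourhood errors, using closest points on the $\mathcal CV$--geodesic and the triangle inequality. The paper orders the closest points along the geodesic (after relabelling $x',y'$) to get $\dist_V(x,x')+\dist_V(x',y')+\dist_V(y',y)\leq\dist_V(x,y)+4(\theta+C)$; your direct estimate $\dist_V(a,b)\leq|\gamma|$ avoids the relabelling and yields the sharper threshold $M\geq 2\theta+2C$.
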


\begin{proof}
Fix $C\ge0$ and let $x,y,x',y'\in X$ be as in the statement.  Then for all $V\in\mathfrak S$, letting $\gamma_V$ be a geodesic in $\mathcal CV$ from $\pi_V(x)$ to $\pi_V(y)$, we have $\dist_V(y',\gamma_V),\dist_V(x',\gamma_V)\leq \theta+C$.  Hence, up to relabelling,
$\dist_V(x,x')+\dist_V(x',y')+\dist_V(y',y)\leq \dist_V(x,y)+4(\theta+C)$.  Thus, if $M$ is such that $\dist_V(x',y')\geq 2M$ but $\dist_V(x,y)\leq M$, then $M\leq 4(\theta+C)$.  Hence $M_1=4\theta+1$ has the required property.
\end{proof}

\begin{remark}[Factored notation]\label{rem:factored-stuff}
If $(X,\mathfrak S)$ is standard and $(\widehat X,\mathfrak S-\mathfrak S_{ql})$ is the factored space from Definition \ref{defn:standard-HHS}, we let $\hat\mu$ be the coarse median on $\widehat X$.  Up to enlarging $E$ uniformly, we can and shall assume that the above discussion of standard product regions, etc. all holds for the HHS $(\widehat X,\mathfrak S-\mathfrak S_{ql})$ with the same constant $E$, and $\hat \mu=q\circ\mu$.  We use $\widehat F_U^p$ and $\widehat E_U^p$ to mean the coarse factors of the $U$--standard product region in $\widehat X$, etc.  We use $\hat \sigma_V$ for distance formula sums in the HHS $(\widehat X,\mathfrak S-\mathfrak S_{ql})$.
\end{remark}

\subsection{Standard flats and gates}\label{subsec:hedge-stuff}
Fix a non-principal ultrafilter $\omega$ on $\naturals$.  Given sequences $(a_n)_n,(b_n)_n$ in $\reals$, we write
\begin{itemize}
    \item $a_n=O(b_n)$ to mean there exists $M<\infty$ with $|a_n|\leq M|b_n|$ for $\omega$--a.e. $n$;

    \item $a_n=o(b_n)$, or $(a_n)_n\ll (b_n)_n$, to mean $\omega-\lim_n \frac{a_n}{b_n}=0$.

\end{itemize}

Fix a standard HHS $(X,\mathfrak S)$ and suppose $U_1,\ldots,U_n\in \mathfrak S-\mathfrak S_{ql}$ satisfy $U_i\orth U_j$ for all $1\leq i<j\leq n$.  

\begin{defn}[Hierarchy box, face]\label{defn:hierarchy-box}
Fix $p\in\bigcap_{i=1}^n\widehat P_{U_i}$.  For each $i$, let $\gamma_i\subset \widehat X$ be a hierarchy interval between points $a_i,b_i\in\widehat F_{U_i}^p$.  There exists $D'$, depending on $E$, such that the inclusions $\gamma_i\to\widehat X$ extend to a $D'$--quasimedian, $(D',D')$--quasi-isometric embedding $H:=\prod_{i=1}^n\gamma_i\to \widehat X$ with image in $\neb_{D'}(\bigcap_{i=1}^nP_{U_i})$.  We call $H$ a \emph{hierarchy box} for $\{U_1,\ldots,U_n\}$.  For each $i$, an associated \emph{face} of $H$ is the hierarchy box obtained by restricting the map $H\to \widehat X$ to a subset obtained by setting the $i^{th}$ coordinate to $a_i$ or $b_i$.
\end{defn}

Now fix a sequence $(o_n)_n$ of basepoints in $\widehat X$ and a rescaling sequence $(r_n)_n$.  For each $n$, let $H_n=\prod_{i=1}^k\gamma^n_i$ be a hierarchy box for some pairwise-orthogonal set $\{U_1^n,\ldots,U_k^n\}$.  Let $\mathcal F_n$ be the set of faces of $H_n$. 

\begin{defn}\label{defn:valid-boxes}
The sequence $(H_n)_n$ of hierarchy boxes is \emph{valid} if the following holds. Define
$$s_n:=\min_i \hat\dist(a^n_i,b^n_i),$$
and
$$e_n:=\max_i\sup_{V\propnest U^n_i} \hat\sigma_V(a_i^n,b_i^n).$$
Then we require:
\begin{itemize}
    \item $(s_n)_n\gg (r_n)_n$,
    \item $(e_n)_n\ll (r_n)_n$, and
    \item $\hat\dist(o_n,H_n)=O(r_n)$ and $(\hat\dist(o_n,F_n))_n\gg (r_n)_n$ for any sequence of faces $F_n\in\mathcal F_n$.
\end{itemize}
We say that $(H_n)_n$ is a \emph{small} valid sequence of hierarchy boxes if it satisfies the following modified list of conditions:
\begin{itemize}
    \item $s_n=O(r_n)$.
    \item $(e_n)_n\ll (r_n)_n$.
    \item $\hat\dist(o_n,H_n)=O(r_n)$.
\end{itemize}
In either case, let $\bf H=\omega-\lim_n H_n$ and let $\hatcone=Cone_\omega(\factored X,(r_n),(o_n))$.  The maps $H_n\to \widehat X$ limit to an injective map $\bf H\to \mathbf{\widehat X}$, whose image we also denote $\bf H$.
\end{defn}

Recall from \cite[Prop. 9.1]{Bowditch:coarse_median} that defining $\boldmu:\hatcone^3\to\hatcone$ as $\boldmu((x_n)_n,(y_n)_n,(z_n)_n)=\omega-\lim_n\mu(x_n,y_n,z_n)$ makes $\hatcone$ into a complete connected topological median algebra, and the median rank is bounded by the HHS complexity of $(\widehat X,\mathfrak S-\mathfrak S_{ql})$, as explained in \cite[Prop. 1.35]{HHS:quasiflats} or \cite[Sec. 25.2]{CRHK}.  In particular, if $\bf Y\subseteq\hatcone$ is closed and median-convex, then there is a \emph{(median) gate map} $\gate_{\bf Y}:\hatcone\to \bf Y$ characterised by the property that $\boldmu(\mathbf x,\gate_{\bf Y}(\mathbf x),\mathbf y)=\gate_{\bf Y}(\mathbf x)$ for all $\mathbf x\in\hatcone$ and $\mathbf y\in\bf Y$.

\begin{lem}\label{lem:hedge-gate-support}
Let $(H_n)_n$ be a valid sequence of hierarchy boxes.  Then the (coarse) gate maps $\widehat X\to H_n$ limit to the (median) gate map ${\bf\gate}:{\bf\factored X}\to\bf H$.  Moreover, $\bf H$ is bilipschitz equivalent to $\reals^k$, and ${\bf\gate}$ is lipschitz (with constant depending only on the HHS parameters).

The same holds if $(H_n)_n$ is a \emph{small} valid sequence, except that $\bf H$ is bilipschitz equivalent to $\prod_{i=1}^k[0,L_i]$ where $L_i\in(0,\infty]$ for all $i$.
\end{lem}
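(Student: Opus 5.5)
The plan is to check that the limit of the coarse gates satisfies the defining property of the median gate. Then we identify $\bf H$ with a product of limits of hierarchy intervals and analyse each factor separately.

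First, let $\gate_n:\widehat X\to H_n$ be the coarse gate maps onto the uniformly quasimedian quasiconvex sets $H_n$. These are $(E',E')$--coarsely lipschitz with $E'$ uniform, because $H_n$ is uniformly hierarchically quasiconvex (Remark~\ref{rem:product-region-facts} together with Definition~\ref{defn:hierarchy-box}). The condition $\hat\dist(o_n,H_n)=O(r_n)$ guarantees that for $\mathbf x=(x_n)\in\hatcone$ the points $\gate_n(x_n)$ stay at distance $O(r_n)$ from $o_n$. Hence $\boldsymbol{\gate}(\mathbf x)=(\gate_n(x_n))_n$ is a well-defined, $E'$--lipschitz map $\hatcone\to\bf H$.

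Next I verify the gate identity. The standard property of coarse gates is
\[\hat\mu(x,\gate_n(x),y)\approx\gate_n(x)\]
up to a uniform error whenever $y\in H_n$. This follows from the characterisation of gates via projections in \cite[Sec. 5]{HHS_II}: for each $U$, the point $\pi_U(\gate_n x)$ is coarsely the closest-point projection to $\pi_U(H_n)$, so the median coarsely equals the gate in every coordinate, and the distance formula concludes. Passing to the ultralimit, the uniform error vanishes after rescaling, so $\boldmu(\mathbf x,\boldsymbol\gate(\mathbf x),\mathbf y)=\boldsymbol\gate(\mathbf x)$ for all $\mathbf y\in\bf H$. I also need $\bf H$ to be closed and median-convex. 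It is the ultralimit of uniformly quasimedian quasiconvex sets, so it is median-convex; it is closed because ultralimits are always closed. By uniqueness of median gates, $\boldsymbol\gate$ is the median gate.

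It remains to identify $\bf H$. The map $H_n=\prod_i\gamma_i^n\to\widehat X$ is a uniform quasi-isometric embedding, so $\bf H$ is bilipschitz to $\prod_i\omega\text{-}\lim\gamma_i^n$ with the $\ell^1$ metric. It therefore suffices to show that each $\omega\text{-}\lim\gamma^n_i$ is bilipschitz to $\reals$ (valid case) or to $[0,L_i]$ (small case). This is the main obstacle, because a hierarchy interval in $\widehat F_{U_i}$ could a priori be "thick". Here I use $e_n\ll r_n$: every proper subdomain $V\propnest U_i^n$ contributes only $o(r_n)$ to the distance formula between $a_i^n,b_i^n$. For $z,z'\in\gamma_i^n$, Lemma~\ref{lem:initial-M} shows that the relevant domains for $(z,z')$ are among those for $(a_i^n,b_i^n)$. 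Applying the strengthened distance formula in $\widehat F_{U_i}$ then gives
\[\hat\dist(z,z')\asymp\hat\dist_{U_i}(z,z')+o(r_n).\]
Since $\pi_{U_i}(\gamma_i^n)$ lies uniformly close to a geodesic in the hyperbolic space $\mathcal CU_i$, each limit factor is bilipschitz to an interval, namely the limit of rescaled geodesics in $\mathcal CU_i$.

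Finally, I determine the length of each interval. In the valid case, $s_n\gg r_n$ and $\hat\dist(o_n,F_n)\gg r_n$ for every face $F_n$. Together these say both endpoints of each factor escape to infinity relative to a point at distance $O(r_n)$, so each factor limits to all of $\reals$, giving $\bf H\cong\reals^k$. In the small case, the factor lengths are $s$-type quantities of order $O(r_n)$, so each factor is a compact interval $[0,L_i]$, or a ray if one endpoint escapes. Here $L_i>0$ requires the lengths to be comparable to $r_n$, which is implicit in the normalisation; otherwise $L_i=\infty$ covers the unbounded case. The lipschitz constant of $\boldsymbol\gate$ depends only on the uniform constants $E',D'$, and hence only on the HHS parameters.
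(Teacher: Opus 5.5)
The gap is in the step that identifies the factors $\boldgamma_i=\omega-\lim_n\gamma_i^n$ as intervals. Up to that point your argument is fine. The well-definedness of the limiting gate, the passage from $\hat\mu(x,\gate_n x,y)\approx\gate_n x$ to $\boldmu(\mathbf x,\boldsymbol{\gate}(\mathbf x),\mathbf y)=\boldsymbol{\gate}(\mathbf x)$, and closedness and median convexity of $\mathbf H$ are what the paper gets by citing \cite[Lem. 3.1]{HHS:quasiflats}.

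The hypothesis $e_n\ll r_n$ says only that $\hat\sigma_V(a_i^n,b_i^n)=o(r_n)$ uniformly over \emph{individual} $V\propnest U_i^n$. It gives no control on the total $\sum_{V\propnest U_i^n}[\dist_V(a_i^n,b_i^n)]_\tau$. That total can be of order $r_n$ or larger when unboundedly many proper subdomains each contribute $o(r_n)$. So $\hat\dist(z,z')\asymp\dist_{U_i}(z,z')+o(r_n)$ does not follow from the strengthened distance formula, and it is false in general.

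Here is a counterexample. Suppose the hierarchy path from $a_i^n$ to $b_i^n$ crosses about $R_n$ proper subdomains, each with projection about $R_n$, while $\dist_{U_i^n}(a_i^n,b_i^n)\approx R_n$, with $R_n\ll r_n\ll R_n^2$. This already happens in the pants graph of $\Sigma_{0,5}$, via four-holed spheres along a $\mathcal C\Sigma_{0,5}$--geodesic. The sequence is then valid, with $e_n\approx R_n$ and $s_n\approx R_n^2$. But points of $\gamma_i^n$ at $\hat\dist$--distance $r_n$ are only about $r_n/R_n=o(r_n)$ apart in $\mathcal CU_i^n$. Hence $\boldgamma_i$ is a line whose image in $T\mathbf U_i$ is a single point; this is exactly the discrepancy the paper's footnote on $T\mathbf U_i$ warns about. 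So $\boldgamma_i$ is not bilipschitz to a limit of rescaled $\mathcal CU_i$--geodesics, and your proof that the factors are intervals, together with the length count built on it, does not go through.

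What is needed is an argument inside the cone that uses $e_n\ll r_n$ one subdomain at a time, which is exactly what the hypothesis controls.
\begin{itemize}
\item For any sequence $V_n\propnest U_i^n$ whose product regions are $O(r_n)$ from $o_n$, one gets $\diam(\fgate_{V_n}(H_n))=o(r_n)$. So the gate of $\mathbf H$ to $\omega-\lim_nF_{V_n}$ is constant.
\item For $V_n\orth U_i^n$, the corresponding gate collapses each parallel copy of $\boldgamma_i$. Thus $\boldgamma_i$ is a closed convex subset of $\omega-\lim_nF_{U_i^n}$ on which all nested gates are constant.
\item The structure theory of the cone, \cite[Thm. 28.3]{CRHK}, then makes $\boldgamma_i$ a real tree. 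Being an ultralimit of hierarchy intervals, it is a segment, ray or line.
\end{itemize}
Only after this do your endpoint considerations, via $s_n\gg r_n$ and $\hat\dist(o_n,F_n)\gg r_n$, legitimately rule out segments and rays.

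Separately, in the small case nothing in the hypotheses forces $s_n\gtrsim r_n$, so $L_i>0$ is not actually derivable. The paper's proof does not address this either.
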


\begin{proof}
Since the sequence $(H_n)_n$ is valid, $\bf H$ is a well-defined closed subset of $\hatcone$. Each $H_n$ is uniformly quasimedian quasiconvex in $\widehat X$, by construction, so $\bf H$ is median-convex and the statement about the gate maps follows from, for instance, \cite[Lem. 3.1]{HHS:quasiflats}.

Let $H_n$ be a hierarchy box for $\{U_1^n,\ldots,U_k^n\}$, so that $H_n$ is the image of $\gamma^n:\prod_{i=1}^k\gamma^n_i\to \neb_{D'}(\bigcap_{i=1}^kP_{U^n_i})$, where $\gamma^n$ is a $D'$--quasimedian $(D',D')$--quasi-isometric embedding.  So, $\bf H$ is the image of a $D'$--bilipschitz median map $\boldsymbol{\gamma}:\prod_{i=1}^k\boldsymbol{\gamma}_i\to \mathbf{\widehat X}$, with convex image, where $\boldsymbol{\gamma}_i$ is the rescaled ultralimit of the hierarchy intervals $\gamma^n_i$.  

Fix $i\leq k$ and let $(V_n)_n$ be a sequence in $\mathfrak S-\mathfrak S_{ql}$ such that $V_n\propnest U_i^n$ for $\omega$--a.e. $n$ and $\dist(o_n,p_n)=O(r_n)$ for some sequence $(p_n)_n$ with $p_n\in P_{V_n}$.  The condition on $(e_n)_n$ in Definition \ref{defn:valid-boxes} implies that $\diam(\fgate^{p_n}_{V_n}(H_n))=o(r_n)$, so the image of $\bf H$ under the gate map to the closed convex set $\omega-\lim_n F^{p_n}_{V_n}$ in $\hatcone$ is a single point.  If $(V_n)_n$ is a sequence with $V_n\orth U^n_i$ for $\omega$--a.e. $n$, then the gate map to $\omega-\lim_n F^{p_n}_{V_n}$ sends each parallel copy of $\boldsymbol{\gamma}_i$ to a point, so $\boldsymbol{\gamma}_i$ is closed and convex and lies in the convex set $\omega-\lim_n F_{U^n_i}^{p_n}$.  From \cite[Thm. 28.3]{CRHK}, and the fact that the gate map to nested $\omega-\lim_n F^{p_n}_{V_n}$ is constant on $\boldsymbol{\gamma}_i$, we get that $\boldsymbol{\gamma}_i$ is a real tree, and it is therefore an interval, a line, or a ray, since each $\gamma_i^n$ is a hierarchy interval.  The assumptions on $(s_n)_n$ and distances from faces to $o_n$ in Definition \ref{defn:valid-boxes} rule out bounded intervals and rays.  So $\bf H$ is bilipschitz equivalent to $\reals^k$.

If $(H_n)_n$ is a small valid sequence, then the same argument as above applies to show that $\bf H$ is median convex, and is the image of a bilipschitz median map $\boldgamma:\prod_{i=1}^k\boldgamma_i\to \hatcone$, where each $\boldgamma_i$ is the rescaled ultralimit of the $(\gamma_n^i)_n$, and $\boldgamma_i$ is a nontrivial interval, line, or ray.
\end{proof}

\begin{defn}[Standard flat, standard box, gate]\label{defn:hedge}
 Under the conditions of Lemma \ref{lem:hedge-gate-support}, if $(H_n)_n$ is a valid sequence, we call $\bf H$ a \emph{standard flat} of \emph{rank $k$}.  If $(H_n)_n$ is a small valid sequence, we call $\bf H$ a \emph{standard box}.  In either case, we call ${\bf\gate}$ the \emph{gate} to $\bf H$. Note that a standard box is bilipschitz equivalent to the product of finitely many intervals, rays, and lines, by Lemma \ref{lem:hedge-gate-support}.
\end{defn} 

\begin{remark}\label{rem:hedge-flat-dimension}
Each standard flat or standard box $\bf H$ is, by definition, associated to a sequence of sets $\{U_i^n\}_{i=1}^k$ of orthogonal bushy domains in $\mathfrak S$, where, by Lemma \ref{lem:hedge-gate-support}, $k=\dim \bf H$.  For each $i\leq k$, let $\mathbf U_i=\omega-\lim U_i^n$.  
\end{remark}

\begin{defn}\label{defn:support-standard-flat}
The \emph{support} of the standard flat/box $\mathbf H$ is the set $\{\mathbf U_i\}_{i=1}^k$ from Remark \ref{rem:hedge-flat-dimension}.  If $\mathbf H'$ is another standard flat/box with support $\{\mathbf V_j\}_{j=1}^{k'}$, then we say $\mathbf H,\mathbf H'$ have \emph{orthogonal supports} if $U_i^n\orth V_j^n$ for all $i,j$ and $\omega$--a.e. $n$. The real trees $T\mathbf U_i=\omega-\lim_n (\mathcal CU^n_i,\dist_{U^n_i}/r_n)$ are the \emph{support trees} for $\mathbf H$.  We let $\dist_{\mathbf U_i}=\omega-\lim_n \dist_{U^n_i}/r_n$ and we define $\pi_{\mathbf U_i}:\mathbf{\widehat X}\to T\mathbf U_i$ by $\pi_{U_i}=\omega-\lim_n\pi_{U^n_i}$.\footnote{We warn the reader that the real trees $T\mathbf U_i$ are not the same as the real trees associated to $\mathbf U_i$ in the real cubing structure in \cite{CRHK}, although they are closely related, and the same is true of the maps $\pi_{\mathbf U_i}$.}  
\end{defn}

\section{Main theorem and consequences}\label{sec:main-theorem}
Our main result is Theorem \ref{thm:main}.  In the theorem, $T_3$ refers to the $3$--regular simplicial tree with the usual path metric  $d$. Given $a,b\in T_3$, let $[a,b]$ denote the (image of the) geodesic joining them.  We also use $d$ to refer to the $\ell_1$ metric on $T_3^k$, although the content of the theorem is not affected by replacing the $\ell_1$ metric with any bilipschitz equivalent metric.

\begin{restatable}{thm}{main}
\label{thm:main}
    Let $( X,\mathfrak S)$ be a standard HHS, let $k\geq 0$, and let $f:T_3^k\to  X$ be a coarse embedding. Then there exist
 \begin{itemize}
 \item an asymptotic cone ${\bf \factored X}$ of $\factored X$ and
     \item a sequence $C_n=\Pi_{i=1}^k[a^n_i,b^n_i]\subseteq T_3^k$
\end{itemize}
such that the following holds.  Let ${\bf\hat f}:{\bf C}\to {\bf \factored X}$ be the ultralimit of $\hat f|_{C_n}:C_n\to \widehat X$, where $\mathbf C$ has the metric $\ctreedist=\omega-\lim_n d/r_n$.  Then there exists a standard $k$--flat $\bf F$ in ${\bf \factored X}$, with gate $\bf\mathfrak g$, such that the composition ${\bf \mathfrak g}\circ{\bf \hat f}:{\bf C}\to{\bf F}$ is bilipschitz.
\end{restatable}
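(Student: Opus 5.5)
We follow the outline given in the introduction: first a counting argument showing that $\hat f$ is ``quasi-isometric in most directions'', then an induction on $k$ carried out in asymptotic cones.

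\textbf{Step 1: counting.} Since $X$ has bounded geometry and $f$ is a coarse embedding, a ball of radius $R$ in $X$ contains only $e^{O(R)}$ points of $f(T_3^k)$. The sphere of radius $R$ in $T_3^k$, by contrast, has cardinality roughly $2^{R}$. This mismatch is what forces $f$ to expand most directions linearly, giving Lemma \ref{lem:easy-count}.

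To pass from $f$ to $\hat f=q\circ f$, we bound the number of points of $X$ that are linearly far from a basepoint in $\dist$ but sublinearly far in $\hat\dist$. Such a point is reached by a concatenation $\alpha_0\beta_1\cdots\beta_n\alpha_n$, as in Lemma \ref{lem:sequence}. Here the $\alpha_i$ have sublinear total length, and each $\beta_i$ moves essentially only in a single quasiline coordinate. Lemma \ref{lem:linear} shows there are only linearly many choices for each $\beta_i$. The count therefore reduces to counting the possible tuples of lengths, i.e.\ integer partitions, which Lemma \ref{lem:partition-counting} bounds by $e^{o(R)}$. Combining the two estimates gives Proposition \ref{prop:counting}: for most pairs of points at distance $R$ in $T_3^k$, $\hat\dist(\hat f(x),\hat f(y))\geq cR$.

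\textbf{Step 2: induction on $k$.} The case $k=0$ is trivial. For $k=1$, choose geodesic segments $[a_n,b_n]$ of length $\gg r_n$ along which $\hat f$ is quasi-isometric at scale $r_n$, as supplied by Proposition \ref{prop:counting}. In the cone, the limit map is then a bilipschitz embedding of a line into $\hatcone$. Hierarchy paths, together with the fact that $\mathcal CU$ is bushy for all $U\notin\mathfrak S_{ql}$, should let us locate a domain $U$ in which the linear distance is realised. The standard $1$-flat is then built from a hierarchy interval in $\widehat F_U$.

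For the inductive step, write $T_3^k=T_3^{k-1}\times T_3$ and choose points and boxes so that both the $T_3^{k-1}$ slices and the $T_3$ slices satisfy the conclusion. The induction then yields a standard $(k-1)$-flat $\mathbf F'$ with gate $\mathbf g'$ such that $\mathbf g'\circ\hat{\mathbf f}$ is bilipschitz on a slice $\mathbf C'\times\{t\}$. Now use the median structure and gates. The $T_3$ factor commutes with the slices, so its image should project boundedly to $\mathbf F'$, i.e.\ to the support trees $T\mathbf U_i$. Consequently it must move linearly in some domain orthogonal to all the $\mathbf U_i$. Let $\mathbf U_k$ be that domain, and combine to obtain a standard $k$-flat $\mathbf F$ with support $\{\mathbf U_1,\ldots,\mathbf U_k\}$.

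\textbf{Step 3: bilipschitz upgrade (main obstacle).} The composite $\mathbf g\circ\hat{\mathbf f}:\mathbf C\to\mathbf F\cong\reals^k$ is a lipschitz map from a product of trees that is bilipschitz only ``in many directions''. First, restrict to a subset of $\mathbf C$ that is bilipschitz to a positive-measure subset of $\reals^k$, namely a product of geodesics in the trees chosen using Proposition \ref{prop:counting}. On this subset, Rademacher's theorem gives a point where the Jacobian has full rank; the self-contained results of Section \ref{subsec:gmt-facts} are used here. Then rescale (take a further cone) about that point to get a bilipschitz linear map. Finally, use invariance of domain, i.e.\ basic topology of manifolds, to ensure the image fills the flat rather than degenerating.

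The hardest part, I expect, is finding suitable subspaces where the full-rank argument applies simultaneously with the orthogonality requirement. Concretely, one must show that the new direction does not collapse under the gate to $\mathbf F'$. My first attempt would be a contradiction argument: if it collapsed, then a $(k-1)$-dimensional target would receive a map that is bilipschitz on a $k$-dimensional set, which is impossible by a Lebesgue measure / Hausdorff dimension count.
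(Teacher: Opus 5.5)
Your Step~1 is the paper's counting argument, but the inductive step has a genuine gap. You claim that, because the $T_3$ factor ``commutes with the slices'', its image gates to a point of your inductive flat $\mathbf F'$ (the paper's $\mathbf F$). This is false: $f$ is an arbitrary coarse embedding and need not respect the product structure of $T_3^k$.

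For example, let $\ell$ be a bi-infinite geodesic in $T_3$, let $b(v)\in\integers$ be the $\ell$--coordinate of the closest point of $\ell$ to $v$, and let $f:T_3^2\to T_3^3$ be $f(v,p)=(\ell(b(v)+d(o,p)),\,p,\,v)$. The target is a standard HHS with $\widehat X=X$, and $f$ is a quasi-isometric embedding. The inductive hypothesis applied to $T_3\times\{o\}$ may legitimately return a flat supported on the first factor, built from segments of $\ell$. Then in the cone the gate of $\fcone(\mathbf x,\mathbf t)$ is $\mathbf x+\mathbf t$, which moves linearly in the $T_3$--direction for every choice of $p_n$.

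This is why the paper never uses the $T_3$--direction itself. It sets $\bD=\omega\text{-}\lim_n C_n\times[o,p_n]\cong\reals^k\times[-1,1]$ and $\mathbf h=\gate\circ\fcone|_{\bD}$. The new direction is then a rectifiable arc $\alpha$ lying \emph{inside a fibre} $\mathbf h^{-1}(x)$ and joining the two slices (Corollary~\ref{cor:arc}). Producing $\alpha$ is the heart of the proof:
\begin{enumerate}
\item the coarea formula gives fibres of finite $\haus^1$--measure on which $\jacobian{\mathbf h}$ has full rank a.e.;
\item a local-homology argument (Lemmas~\ref{lem:path-top-to-bottom} and~\ref{lem:local_hom}) shows that some fibre component reaches both $\reals^k\times\{\pm1\}$, because $\mathbf h$ is bilipschitz on the bottom slice.
\end{enumerate}
The counting enters specifically to choose $p_n$ with $\fcone(\bC_\bo)\cap\fcone(\bC_\bp)=\emptyset$ (Lemma~\ref{lem:good_direction}). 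This makes the endpoints of $\alpha$ have distinct images, so \cite[Lem.~13.13]{HHS_I} yields a $1$--box with some support $\bU_{k+1}$.

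Even granting a direction along which the gate is constant, your ``consequently it must move linearly in some domain orthogonal to all the $\mathbf U_i$'' does not follow. Constant projection to $T\bU_j$ is compatible with $\bU_{k+1}\propnest\bU_j$ (by bounded geodesic image) or with $\bU_{k+1}\transverse\bU_j$. The paper rules these cases out in Corollary~\ref{cor:orthogonal-supports} using four points: $\mathbf z,\mathbf w\in\alpha$ and nearby $\mathbf z',\mathbf w'$, chosen using that $\jacobian{\mathbf h}$ has full rank along $\alpha$.

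Step~3 also fails as stated:
\begin{enumerate}
\item Rademacher gives differentiability, not full rank.
\item Invariance of domain needs injectivity, which you do not have.
\item Your dimension-count contradiction presupposes a map that is bilipschitz on a $k$--dimensional set, which is exactly what is not yet available. Proposition~\ref{prop:counting} only counts points near a single basepoint and produces no bilipschitz subset of the cone. The same objection applies to your $k=1$ case.
\end{enumerate}
The paper instead obtains a full-rank point of the combined gate map from the degree argument of Lemma~\ref{lem:full_rank_h}. That argument combines the $k$ full-rank directions of $\mathbf h$ at $\mathbf z$ with the nonzero derivative of $\mathbf h'$ along the fibre arc $\alpha$. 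Only then does it rescale, via underspill, to obtain a bilipschitz map.
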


Before the proof, we give some applications.

\subsection{General HHS corollaries}\label{subsec:corollaries-HHS}
We first state some general consequences for HHSs.

\bushyQR*

\begin{proof}
Theorem \ref{thm:main} provides a standard $k$--flat $\bf F$ in $\hatcone$, so by Remark \ref{rem:hedge-flat-dimension}, $\mathfrak S-\mathfrak S_{ql}$ contains a set of pairwise orthogonal elements $U_1,\ldots,U_k$ with each $\mathcal CU_i$ bushy, as required.
\end{proof}

The following notation will be convenient below:

\begin{defn}\label{defn:bushy-QR}
Let $(X,\mathfrak S)$ be an HHS.  Given $\mathfrak T\subseteq\mathfrak S$, the \emph{orthogonality number} $\orthnum(\mathfrak T)$ of $\mathfrak T$ is the maximal cardinality of subsets of $\mathfrak T$ whose elements are pairwise orthogonal. Let $\bushy$ be the set of $U\in\mathfrak S$ such that $\mathcal CU$ is bushy.
\end{defn}

For example, the bound given by Corollary \ref{cor:bushy-QR} is $k\leq \orthnum(\bushy)$.

\begin{remark}\label{rem:standard-bushy-QR}
If $(X,\mathfrak S)$ is standard, then $\orthnum(\bushy)$ is at least the maximal cardinality of sets of pairwise orthogonal elements $U\in\mathfrak S$ such that $\mathcal CU$ has diameter more than $E$ and $U$ is not $\nest$--minimal.
\end{remark}

For HHGs, we get:

\begin{cor}\label{cor:HHG-version}
Let $(G,\mathfrak S)$ be a HHG such that $\stab_G(U)$ acts coboundedly on $\mathcal CU$ for all $U\in\mathfrak S$.  Let $k=\orthnum(\bushy)$. Then $k$ is the maximal $n$ such that $F_2^n$ coarsely embeds in $G$, and $F_2^k$ quasi-isometrically embeds in $G$.
\end{cor}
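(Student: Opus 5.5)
The plan has two halves: an upper bound obtained from Corollary \ref{cor:bushy-QR}, and a lower bound obtained by building a quasi-isometrically embedded copy of $F_2^k$ inside the product region of a maximal orthogonal family of bushy domains.

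\emph{Upper bound.} By Lemma \ref{lem:hhg_std}, $G$ carries a standard HHG structure $(G,\mathfrak S')$, where $\mathfrak S'$ is obtained from $\mathfrak S$ by deleting bounded domains. Bushy domains have unbounded $\mathcal CU$, so none of them is deleted. Hence the bushy domains of $\mathfrak S'$ coincide with those of $\mathfrak S$, and $\orthnum(\bushy)$ is unchanged. Suppose $F_2^n$ coarsely embeds in $G$. Then $T_3^n$ coarsely embeds as well, since it is quasi-isometric to $F_2^n$ (or contains it, whichever direction is needed). Corollary \ref{cor:bushy-QR} then gives $n\le k$.

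\emph{Lower bound.} Choose pairwise orthogonal $U_1,\dots,U_k\in\bushy$. For each $i$, I want a free subgroup $\langle a_i,b_i\rangle\le\stab_G(U_i)$ that acts on $\mathcal CU_i$ as a Schottky group, so that its orbit map to $\mathcal CU_i$ is a quasi-isometric embedding of the tree. Coboundedness of $\stab_G(U_i)$ on the bushy space $\mathcal CU_i$ yields at least two independent loxodromics: the action is non-elementary, because a cobounded action on a hyperbolic space with three quasigeodesic rays diverging in the sense of Definition \ref{defn:bushy} cannot fix a point or a pair of points at infinity. Ping-pong on sufficiently high powers then gives a free group $\Phi_i$ whose orbit map is a quasi-isometric embedding into $\mathcal CU_i$.

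Next I need these subgroups to commute, or at least to assemble into a quasi-isometric embedding of the product. The cleanest route avoids a subgroup claim altogether. Fix $p\in\bigcap_iP_{U_i}$ and use the quasi-isometric embedding $\prod_iF^p_{U_i}\to G$ from Remark \ref{rem:product-region-facts}(6). For each $i$, define $T_i\to F^p_{U_i}$ by sending $g\in\Phi_i$ to the gate $\fgate^p_{U_i}(gp)$. The composite with $\pi_{U_i}$ is a quasi-isometric embedding, so by the distance formula $T_i\to F^p_{U_i}$ is a quasi-isometric embedding as well: distances in $F^p_{U_i}$ are bounded below by $\dist_{U_i}$ and above by the coarse lipschitz property. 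Taking the product map $\prod_i T_i\to\prod_iF^p_{U_i}\to G$ gives a quasi-isometric embedding of $F_2^k$.

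\emph{Main obstacle.} The step I expect to be hardest is checking that $\pi_{U_i}$ restricted to $F^p_{U_i}$ coarsely agrees with the orbit map of $\Phi_i$, so that the lower bound on distances transfers. This requires that $\pi_{U_i}(\fgate^p_{U_i}(x))$ be uniformly close to $\pi_{U_i}(x)$, which is the standard gate property for $U_i\nest U_i$. I expect this to go through.
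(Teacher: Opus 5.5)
Your overall route is the paper's own. You pass to a standard structure via Lemma~\ref{lem:hhg_std}, which deletes only bounded domains, so $\orthnum(\bushy)$ is unchanged, and you get the upper bound from Corollary~\ref{cor:bushy-QR}. For the lower bound you take free subgroups of $\stab_G(U_i)$ with quasi-isometrically embedded orbits in $\mathcal CU_i$, gate them into $F^p_{U_i}$, and assemble them via Remark~\ref{rem:product-region-facts}. Your check of that final assembly is fine: the orbit map is lipschitz and the gate coarsely lipschitz, which gives the upper bound, while $\pi_{U_i}\circ\fgate^p_{U_i}\approx\pi_{U_i}$ together with equivariance gives the lower bound.

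The step that fails as written is your justification for the Schottky subgroup. It is false that a cobounded action on a bushy hyperbolic space cannot fix a point at infinity, because cobounded actions can be focal (quasi-parabolic). For example, $BS(1,2)$ acts cocompactly on its Bass--Serre tree, which is the $3$--regular tree and hence bushy, and it fixes an end. Since $BS(1,2)$ is solvable, it contains no free subgroup at all, let alone one with quasi-isometrically embedded orbits.

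Coboundedness plus bushiness does rule out the horocyclic and lineal cases, since a cobounded lineal action forces a quasiline. The focal case, however, survives and needs separate input. One standard route is acylindricity. By Osin's classification, an acylindrical action is elliptic, lineal, or of general type, so an acylindrical cobounded action on a bushy space is of general type and ping-pong on high powers applies. Acylindricity holds for $G$ acting on the top-level space of any HHG (\cite{HHS_II}) and for all domains in the motivating examples, such as mapping class groups and RAAGs. It is not, however, a formal consequence of the bare hypothesis that $\stab_G(U)$ acts coboundedly on $\mathcal CU$.

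The paper's proof simply asserts the existence of the free subgroup at this point without argument. So you are on the paper's route, but the reason you give for that assertion is invalid. It must be replaced by something that genuinely excludes a global fixed point at infinity.
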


\begin{proof}
By Lemma \ref{lem:hhg_std}, we can change the HHG structure on $G$ to ensure that it is standard, without affecting $k=\orthnum(\bushy)$. Corollary \ref{cor:bushy-QR} implies that $F_2^n$ can coarsely embed in $G$ only if $n\leq k$. We are left to argue that $F_2^k$ quasi-isometrically embeds into $G$. Let $U_1,\dots, U_k$ be bushy pairwise orthogonal domains. Since $\stab_G(U)$ acts coboundedly on $\mathcal CU$, we have that $\stab_G(U)$ contains a free subgroup of rank two with quasi-isometrically embedded orbits in $\mathcal CU_i$. This implies that $F_{U_i}$ contains a quasi-isometrically embedded copy of $F_2$.  Recalling from Remark \ref{rem:product-region-facts} that $\Pi_{i=1}^k F_{U_i}$ quasi-isometrically embeds into $G$, we are done.
\end{proof}

We are interested in coarse embeddings with image contained in certain subgroups of interest in particular HHGs, such as Torelli subgroups of mapping class groups. Sometimes the maximal number of factors of a product of free groups that can be coarsely embedded in the subgroup is lower than for the ambient group. This is because, roughly, certain domains cannot contribute, and these are captured in the following definition.

\begin{defn}\label{defn:incompatible}
    Let $(X,\mathfrak S)$ be an HHS and let $A\subseteq X$. A subset $\mathcal T$ of $\mathfrak S$ is \emph{incompatible with $A$} if there do not exist sequences $U_n\in\mathcal T$, $x_n,y_n\in P_{U_n}$,such that
    \begin{itemize}
        \item $\lim_n \dist_{U_n}(x_n,y_n)=+\infty$,
        \item $\lim_n \dist_{X}(x_n,y_n)/\dist_{U_n}(x_n,y_n)<+\infty$,
        \item $\lim_n \dist_{X}(x_n,A)/\dist_{U_n}(x_n,y_n)=0$, and same for $y_n$.
    \end{itemize}
    
\end{defn}

We then have the following refinement of Corollary \ref{cor:bushy-QR}.

\begin{cor}
\label{cor:incompatible}
    Let $(X,\mathfrak S)$ be a standard HHS, let $A\subseteq X$, and let $\mathcal T\subseteq \mathfrak S$ be incompatible with $A$.  If $f:T_3^k\to A$ is a coarse embedding, then $k\leq\orthnum(\bushy-\mathcal T)$.
\end{cor}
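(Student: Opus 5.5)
The plan is to apply Theorem \ref{thm:main} to $f$, viewed as a coarse embedding into $X$, and to show that none of the support domains of the resulting standard $k$--flat can lie in $\mathcal T$. Together with Remark \ref{rem:hedge-flat-dimension}, this gives $k$ pairwise orthogonal elements of $\bushy-\mathcal T$.

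Concretely, Theorem \ref{thm:main} provides boxes $C_n=\prod_i[a_i^n,b_i^n]$, a rescaling sequence $(r_n)$, and a standard $k$--flat $\mathbf F$ with support $\{\mathbf U_i\}$. It also shows that $\gate\circ\hat{\mathbf f}:\mathbf C\to\mathbf F$ is bilipschitz.

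Suppose for contradiction that $U_1^n\in\mathcal T$ for $\omega$--a.e. $n$. Since $\mathbf C$ is bilipschitz to $\reals^k$ and the gate is bilipschitz, we can pick points $\mathbf c,\mathbf c'\in\mathbf C$ that differ only in the first coordinate, at $\ctreedist$--distance $1$. Their images under $\gate\circ\hat{\mathbf f}$ are then at distance bounded below. I would arrange that these images differ essentially in the $T\mathbf U_1$ direction, so that $\dist_{\mathbf U_1}\geq \epsilon>0$. To do this, choose $\mathbf c'$ along a direction whose image moves the $\mathbf U_1$ coordinate of the flat; since $\mathbf F\cong\prod_i\boldsymbol\gamma_i$ and the map is bilipschitz onto $\mathbf F$, some segment in $\mathbf C$ maps to a path whose projection to $T\mathbf U_1$ moves linearly.

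Take representatives $c_n,c_n'\in T_3^k$ with $d(c_n,c_n')\approx r_n$. Set $x_n=\fgate$--type gates of $f(c_n),f(c_n')$ into $P_{U_1^n}$; more precisely, let $x_n,y_n$ be the gates of $f(c_n),f(c_n')$ to the product region $P_{U^n_1}$ in $X$. We then check the three conditions of Definition \ref{defn:incompatible}:
\begin{enumerate}
\item $\dist_{U_1^n}(x_n,y_n)\approx\dist_{U_1^n}(f(c_n),f(c_n'))\geq \epsilon r_n/2$, which tends to infinity. Here we use that $\pi_{U}$ coarsely commutes with the gate to $P_U$, and that $\hat\pi_U=\pi_U$ since $U_1^n\notin\mathfrak S_{ql}$.
\item $\dist_X(x_n,y_n)\lesssim\dist_X(f(c_n),f(c_n'))\leq\rho_+(r_n)$, where $\rho_+$ is the upper control function of the coarse embedding. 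So this distance is $O(r_n)$, and in particular the ratio with $\dist_{U_1^n}$ is bounded.
\item Since $f(c_n)\in A$, we need $\dist_X(f(c_n),P_{U_1^n})=o(r_n)$.
\end{enumerate}

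The third condition is the main obstacle. The flat $\mathbf F$ lies in the ultralimit of $\widehat P_{U_1^n}$, and $\hat{\mathbf f}(\mathbf C)$ need not lie on $\mathbf F$, only gate onto it. Moreover, the relevant statement is about $X$, not $\widehat X$. The first thing I would try is to strengthen the choice of points by replacing $\mathbf C$ with a sub-box. Lemma-level counting arguments (Proposition \ref{prop:counting}) let one pass to points where $\hat{\mathbf f}$ itself lands on $\mathbf F$. One then has to control the discrepancy between $\dist$ and $\hat\dist$: distance to $P_{U}$ in $X$ is governed by distance formula terms in domains transverse to or properly containing $U$. Only the quasiline terms are lost in $\widehat X$, and these can be made $o(r_n)$ by the same counting estimates.

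If that fails, a cleaner alternative is to set $x_n=f(c_n)$ and $y_n=f(c_n')$ directly. Then $\dist(x_n,A)=0$ automatically, and we only need to verify that these points lie in $P_{U_1^n}$ up to sublinear error. This might be arranged via the definition of incompatibility up to a sublinear perturbation, replacing $x_n$ by its gate and absorbing the error into the ratio condition.

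With all three conditions verified, we contradict the incompatibility of $\mathcal T$ with $A$. Hence every $U_i^n$ lies in $\bushy-\mathcal T$ for $\omega$--a.e. $n$, and these domains are pairwise orthogonal, which gives $k\leq\orthnum(\bushy-\mathcal T)$.
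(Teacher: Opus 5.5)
You reduce the problem the same way the paper does: apply Theorem \ref{thm:main}, assume $U_1^n\in\mathcal T$ for $\omega$--a.e. $n$, pick $c_n,c_n'$ with $d(c_n,c_n')\approx r_n$ and $\dist_{U_1^n}(f(c_n),f(c_n'))\gtrsim r_n$, and gate to $P_{U_1^n}$ in $X$. Your conditions (1) and (2) are fine. The gap is condition (3). It amounts to $\dist_X(f(c_n),P_{U_1^n})=o(r_n)$, because $\dist(x_n,A)\le\dist(x_n,f(c_n))$ and the latter is comparable to $\dist_X(f(c_n),P_{U_1^n})$. Neither of your routes proves this.
\begin{itemize}
\item Proposition \ref{prop:counting} only bounds how many points the factor map $q$ collapses. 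It does not give points where $\hat{\mathbf f}$ lands on $\mathbf F$; the flat $\hat{\mathbf f}(\mathbf C)$ need not meet $\mathbf F$ at all, it only gates bilipschitzly onto it.
\item Nor does it control terms $\dist_V(f(c_n),\rho^{U_1^n}_V)$ for quasiline domains $V\transverse U_1^n$, and these do contribute to $\dist_X(f(c_n),P_{U_1^n})$.
\item Your second route only restates the obstacle. Definition \ref{defn:incompatible} requires $x_n,y_n\in P_{U_n}$, so you still need the sublinear bound on the distance to $P_{U_1^n}$.
\end{itemize}

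The missing idea is local. You combine the fact that the $T\mathbf U_1$--projection of $\hat{\mathbf f}$ is not locally constant at $\mathbf c$ with the fact that $\pi_U$ is sublinearly contracting far from $P_U$ (\cite[Lemma 13.12]{HHS_I}).

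First, non-local-constancy holds at every point of $\mathbf C$. Since $\gate\circ\hat{\mathbf f}$ is a bilipschitz homeomorphism onto $\mathbf F\cong\prod_j\boldgamma_j$, every $\epsilon$--ball about $\mathbf c$ contains a point whose image differs from that of $\mathbf c$ in the $\boldgamma_1$--coordinate. Gating to $H_n$ coarsely does not increase $\mathcal CU_1^n$--distances, and along $\gamma_1^n$ the $\hat\dist$--distance is governed by $\dist_{U_1^n}$ up to $o(r_n)$ because $e_n\ll r_n$. So you get $c_n'$ with $d(c_n,c_n')<\epsilon r_n$ and $\dist_{U_1^n}(f(c_n),f(c_n'))\geq\eta r_n$ for some $\eta=\eta(\epsilon)>0$.

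Now suppose instead that $\dist_X(f(c_n),P_{U_1^n})\geq\kappa r_n$ for some $\kappa>0$. Take $\epsilon$ small in terms of $\kappa$ and the coarse lipschitz constant of $f\colon T_3^k\to X$. Then sublinear contraction forces $\dist_{U_1^n}(f(c_n),f(c_n'))=o(r_n)$, which is a contradiction.

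This argument runs entirely in $X$. The $\mathcal CU_1^n$ projections agree for $X$ and $\widehat X$ since $U_1^n\notin\mathfrak S_{ql}$. So the $\dist$ versus $\hat\dist$ discrepancy and the quasiline terms never need to be controlled. This is how the paper closes the argument; it is the role of its third bullet point.
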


\begin{proof}
    Let $f:T_3^k\to A$ be a coarse embedding, and consider the flat $\mathbf F$ as in Theorem \ref{thm:main}; we also fix the rest of the notation of Theorem \ref{thm:main}. Denote by $\{\mathbf U_i\}_{i=1}^k$ the support of $\mathbf F$. It suffices to show that for all $i=1,\dots,k$ and for $\omega$-a.e. $n$ we have $U^n_i\notin \mathcal T$. Suppose that this is not the case, that is, up to reordering the elements of the supports, there exists $i$ such that $U^n_i\in \mathcal T$ for $\omega$-a.e. $n$.

 Since the composition ${\bf \mathfrak g}\circ{\bf \hat f}$ is bilipschitz, there exist sequences $\mathbf a,\mathbf b$ in $T^k_3$ such that
\begin{itemize}
\item $\omega-\lim d(a_n,b_n)/r_n=1$,
    \item $\omega-\lim \dist_{\mathcal CU_i^n}(f(a_n),f(b_n))/r_n=\alpha\in (0,\infty)$,
    \item for all $\epsilon> 0$ there exists $a'_n\in T^3_k$ with $\omega-\lim d(a'_n,a_n)/r_n<\epsilon$ and $\omega-\lim \dist_{\mathcal CU_i^n}(f(a'_n),f(a_n))/r_n>0$, and same for $b_n$.
\end{itemize}

Now, set $x_n=\mathfrak g_{H_n}(f(a_n)), y_n=\mathfrak g_{H_n}(f(b_n))$, so that in particular $x_n,y_n\in P_{U^i_n}$. By construction of gates we have $\omega-\lim \dist_{\mathcal CU_i^n}(x_n,y_n)/r_n=\omega-\lim \dist_{\hat X}(f(a_n),f(b_n))/r_n=\alpha$. Keeping into account that $f$ is coarsely lipschitz, we see that subsequences of $(x_n)$ and $(y_n)$ contradict incompatibility provided that $\omega-\lim \dist_X(x_n, P_{U^i_n})/r_n=0$, and same for $y_n$. But this follows from the last bullet point and \cite[Lemma 13.12]{HHS_I}, saying that far from the product region the projection to $\mathcal CU_i^n$ is sublinearly contracting. 
\end{proof}

\subsection{Mapping class group applications}
The following can be proven using an Euler characteristic argument --- see \cite[Lemma 4.2]{Bowditch:WP} --- and is illustrated by Figure \ref{fig:complexity}.

\begin{lemma}
\label{lem:Euler}
    The maximal cardinality of a collection of disjoint subsurfaces in $\Sigma_{g,p}$, each of complexity at least 1, is $\left\lfloor \frac{3g+p-2}{2}\right\rfloor$.
\end{lemma}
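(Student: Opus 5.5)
Here "complexity" means $\xi(S)=3g'+p'-3$ for a connected subsurface $S$ of genus $g'$ with $p'$ boundary components and punctures. A subsurface has complexity at least $1$ exactly when it is not a pair of pants or an annulus, i.e. when $\chi(S)\le -1$ and $S$ is not $\Sigma_{0,3}$. The plan is to prove an upper bound by Euler characteristic counting and then exhibit a configuration attaining it.

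\emph{Upper bound.} Let $S_1,\dots,S_m$ be pairwise disjoint, pairwise non-isotopic, essential subsurfaces of complexity at least $1$. Realise them with disjoint geodesic boundaries and take the complementary pieces. Extend the union of all boundary curves to a pants decomposition $P$ of $\Sigma_{g,p}$. This uses $3g+p-3$ curves and produces $2g-2+p=-\chi(\Sigma_{g,p})$ pairs of pants. Each $S_i$ contains at least two pairs of pants of $P$, since $-\chi(S_i)\ge 2$ for every complexity-$\ge1$ surface: $\Sigma_{1,1}$ and $\Sigma_{0,4}$ have $\chi=-1$ but complexity $1$, so this claim needs adjusting. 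The correct count is instead to bound curves. Each $S_i$ contains at least $\xi(S_i)\ge 1$ interior pants curves. In addition, the boundary curves of the $S_i$ are distinct pants curves, and each is shared by at most two of the $S_i$. One then counts pairs of pants and boundary in the combined form \[ \sum_i(-\chi(S_i))\le 2g-2+p,\qquad -\chi(S_i)\ge 1, \] together with the observation that a piece with $-\chi=1$ is $\Sigma_{1,1}$ or $\Sigma_{0,4}$, hence has at least one boundary curve (or puncture).

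The clean route is to follow \cite[Lemma 4.2]{Bowditch:WP}. Double-count the pair (pants, curve): $2(3g+p-3)+p = 3(2g-2+p)$. Each $S_i$ uses $-\chi(S_i)$ pants, and its interior curves number $\xi(S_i)$. Summing $\xi(S_i)\ge1$ and using that boundary curves are shared gives $m\le (3g+p-3+1)/2$. The ``$+1$'' comes from the fact that the essential boundary curves of the union, viewed as edges of a dual graph whose vertices are the $S_i$ and the complementary components, form a connected graph. Hence there are at least $m-1$ of them, disjoint from the at least $m$ interior curves. This gives $2m-1\le 3g+p-3$, i.e. $m\le \lfloor(3g+p-2)/2\rfloor$. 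The main care needed is this connectivity count: a connected graph with at least $m$ vertices has at least $m-1$ edges, and each edge is a distinct pants curve not interior to any $S_i$.

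\emph{Lower bound.} Build the configuration explicitly, as in Figure~\ref{fig:complexity}. Take a chain decomposition into one-holed tori and four-holed spheres $\Sigma_{0,4}$ (or $\Sigma_{1,1}$ pieces glued along single curves), arranged so that every piece has complexity $1$ and the only non-interior curves are the $m-1$ gluing curves. For $g$ one-holed tori, with the punctures distributed two at a time on $\Sigma_{0,4}$ pieces, the count works out to $\lfloor(3g+p-2)/2\rfloor$. When parity forces it, one leftover piece is a pair of pants or absorbs an extra puncture; this is a routine case check on the parity of $3g+p$.
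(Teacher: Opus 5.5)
Your final upper-bound argument is correct, but it takes a different route from the paper. The paper, following Bowditch, counts Euler characteristic. One has $\sum_i -\chi(S_i)\le -\chi(\Sigma_{g,p})=2g-2+p$. Among surfaces of complexity at least $1$, only $\Sigma_{1,1}$ has $-\chi=1$; every other one, including $\Sigma_{0,4}$, has $-\chi\ge 2$. At most $g$ of the $S_i$ can be one-holed tori. Hence $2m-g\le 2g-2+p$, i.e.\ $2m\le 3g+p-2$.

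Your abandoned first attempt was exactly this argument. You dropped it only because of the miscomputation $\chi(\Sigma_{0,4})=-1$; in fact $\chi(\Sigma_{0,4})=-2$. For the same reason, your later remark that a piece with $-\chi=1$ is $\Sigma_{1,1}$ or $\Sigma_{0,4}$ is false. Those sentences, and the double-counting identity, play no role in your final inequality and should be deleted.

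Your replacement instead counts curves of a pants decomposition extending $\bigcup_i\partial S_i$. There is at least one interior curve per $S_i$, since $\xi(S_i)\ge 1$. There are at least $m-1$ further curves from the edges of the connected dual graph. Together these give $2m-1\le 3g+p-3$. This route avoids both the classification of surfaces with small $|\chi|$ and the genus bound on the number of tori, using only connectivity of $\Sigma_{g,p}$. It also exhibits the bound directly as $\lfloor(\xi+1)/2\rfloor$ with $\xi=3g+p-3$.

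One point needs stating carefully. Edges must correspond to distinct isotopy classes of pants curves. So collapse each complementary annulus first, since its two boundary curves are a single pants curve. Only then apply $E\ge V-1$; the resulting graph is still connected with at least $m$ vertices.

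Your lower bound is the same idea as the paper's, but stated loosely. One-holed tori are leaves of the dual graph, so the configuration is a tree rather than a chain once $g\ge 3$. Placing punctures ``two at a time'' also undercounts when $p$ is small. For example, with $p=0$ and $g=4$ you need a $\Sigma_{0,4}$ bounded entirely by torus boundaries. The paper's description is cleaner. Removing $g$ disjoint one-holed tori leaves $\Sigma_{0,g+p}$, which contains a chain of $\lfloor (g+p-2)/2\rfloor$ disjoint four-holed spheres. This gives $g+\lfloor (g+p-2)/2\rfloor=\lfloor (3g+p-2)/2\rfloor$ subsurfaces in total, with no separate parity case check needed.
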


\begin{figure}[h]
\includegraphics[scale=0.6]{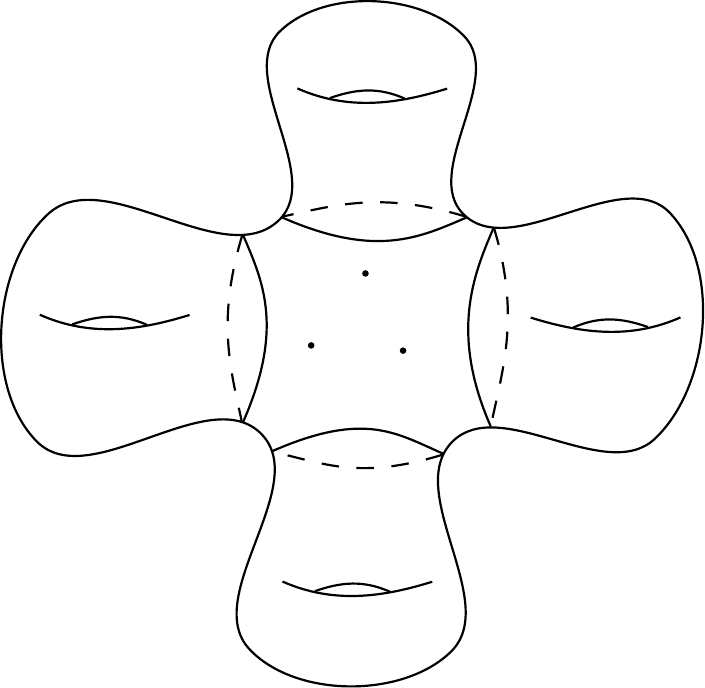}
\caption{Cutting out $g$ one-holed tori from $\Sigma_{g,p}$ yields a sphere with $g+p$ holes/punctures, which contains $\lfloor g+p-2\rfloor$ disjoint 4-holed-spheres, for a total of $\left\lfloor \frac{3g+p-2}{2}\right\rfloor=g+\lfloor \frac{g+p-2}{2}\rfloor$ subsurfaces.}
\label{fig:complexity}
\end{figure}

We now prove our first theorem from the introduction, which we recall for convenience.

\mcg*

\begin{proof}
    Let $k= \left\lfloor \frac{3g+p-2}{2}\right\rfloor$. It suffices to argue that $\MCG(\Sigma_{g,p})$ contains a quasi-isometrically embedded subgroup isomorphic to $F_2^k$, and that $F_2^{k+1}$ does not coarsely embed into $\MCG(\Sigma_{g,p})$.
    
    For the former, by Lemma \ref{lem:Euler} there are $k$ disjoint subsurfaces of complexity at least 1 in $\Sigma_{g,p}$, and we can consider quasi-isometrically embedded free groups supported in each. The product of these quasi-isometrically embeds into $\MCG(\Sigma_{g,p})$ by the distance formula.  The latter follows from Corollary \ref{cor:HHG-version} and Lemma \ref{lem:Euler}.
\end{proof}

We now consider Torelli groups $\mathcal I_g$, to which we will apply Corollary \ref{cor:incompatible}. In order to do so, we need the following lemma.

\begin{lemma}\label{lem:which-matrix-norm}
    Let $\mathcal T$ be the set of all one-holed tori in $\Sigma_g$. Then $\mathcal T$ is incompatible with $\mathcal I_g$.
\end{lemma}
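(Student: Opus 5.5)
The plan is to argue by contradiction using the symplectic representation $\Psi:\MCG(\Sigma_g)\to Sp(2g,\integers)$, whose kernel is $\mathcal I_g$. The point is that matrix norms of $\Psi$ grow at most exponentially in word length. Large progress in the curve graph of a one-holed torus, however, forces exponential growth of the action on the homology of that torus, and this cannot happen between two points that are sublinearly close to $\mathcal I_g$. Throughout, $X=\MCG(\Sigma_g)$ with a word metric, and $\|\cdot\|$ is any fixed matrix norm. Since $\|\Psi(gh)\|\le\|\Psi(g)\|\|\Psi(h)\|$, there is $c_0$ with $\|\Psi(g)\|\le e^{c_0|g|}$.

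Suppose $U_n\in\mathcal T$ and $x_n,y_n\in P_{U_n}$ violate incompatibility, and set $L_n=\dist_{U_n}(x_n,y_n)\to\infty$. By the third bullet of Definition \ref{defn:incompatible}, there are $t_n,t'_n\in\mathcal I_g$ with $x_n=t_nc_n$, $y_n=t'_nc'_n$ and $|c_n|,|c'_n|=o(L_n)$. Then
$$\Psi(x_n^{-1}y_n)=\Psi(c_n)^{-1}\Psi(c'_n),$$
so $\|\Psi(x_n^{-1}y_n)\|\le e^{o(L_n)}$.

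Next I exploit the product region structure. There are finitely many $\MCG$--orbits of one-holed tori, so after translating by $x_n^{-1}$ and passing to a subsequence, I may assume $x_n^{-1}U_n=U$ is a fixed one-holed torus. By equivariance, $x_n^{-1}y_n\in P_U$ and $\dist_U(1,x_n^{-1}y_n)=L_n$, up to bounded error. Using the description of $P_U$ as uniformly close to the stabiliser of $\partial U$, which is virtually $\MCG(U)\times\MCG(\Sigma_g- U)\times\langle T_{\partial U}\rangle$, I write $x_n^{-1}y_n=a_nb_nz_nk_n$. Here $a_n\in\MCG(U)$, $b_n$ is supported on the complement, $z_n$ is a power of the Dehn twist $T_{\partial U}$, and $k_n$ has uniformly bounded length. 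Since $\dist_U$ only sees $a_n$, the element $a_n$ moves the base curve of $U$ by Farey-graph distance $L_n-O(1)$.

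The homology splits as $H_1(\Sigma_g)=H_1(U)\oplus H_1(\Sigma_g-U)$, because $\partial U$ is separating. With respect to this splitting $\Psi(a_n)$ and $\Psi(b_n)$ are block diagonal, $\Psi(a_n)$ restricts to an element $A_n\in SL_2(\integers)$, and $\Psi(z_n)=I$ because twists about separating curves act trivially on homology. Hence $\|\Psi(x_n^{-1}y_n)\|\ge\|A_n\|/C$ for a constant $C$ absorbing $\Psi(k_n)$. The remaining input is that Farey distance controls matrix size: if $A_n$ sends slope $0/1$ to a slope at Farey distance $L$, then $\|A_n\|\ge\lambda^{L}$ for a uniform $\lambda>1$. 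I would prove this via continued fractions. A slope $p/q$ at Farey distance $L$ from $0/1$ requires roughly $L$ partial quotients (up to a factor 2), so $|p|+|q|$ is at least of Fibonacci size in $L$. Combining the two bounds gives $\lambda^{L_n-O(1)}\le Ce^{o(L_n)}$, which is a contradiction as $L_n\to\infty$. Note that the second bullet of Definition \ref{defn:incompatible} is not even needed.

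I expect the main obstacle to be the decomposition step, namely making rigorous, with uniform constants, that an element of $P_U$ differs by a bounded amount from a product $a_nb_nz_n$ in which $a_n$ carries all of the $U$--projection. This should follow from Remark \ref{rem:product-region-facts} together with the standard identification of $F_U$ with $\MCG(U)$ and $E_U$ with the stabiliser of $U$ in the complement. Even so, some care is needed to check that the bounded error and the twist factor do not affect the homology estimate. The Farey-distance versus Fibonacci-growth estimate is classical and should be straightforward.
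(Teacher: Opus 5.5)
Your proof is correct and follows the paper's argument. Both use the homology representation and play an exponential lower bound on $\|\Psi(x_n^{-1}y_n)\|$, coming from progress $L_n$ in the Farey graph of the torus, against an $e^{o(L_n)}$ upper bound from proximity to $\mathcal I_g$; the paper gets the lower bound from Hempel's intersection-number lemma, reading $i(\alpha,d_n\alpha)$ off as a matrix entry, where you use continued fractions. Your factorisation $\Psi(x_n^{-1}y_n)=\Psi(c_n)^{-1}\Psi(c_n')$ is actually the right way to get the upper bound: the paper's closing sentence instead asserts $\dist(1,d_n)/r_n\to 0$, which cannot hold since $\dist(1,d_n)=\dist(x_n,y_n)$ is bounded below linearly in $r_n$ because $\pi_{U_n}$ is coarsely lipschitz.
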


\begin{proof}
    Fix $T\in\mathcal T$, so that any other $T'\in\mathcal T$ can be written as $xT$ for some mapping class $x$. We identify $P_T$ with $\stab(T)$.
    Consider sequences $U_n=x_nT\in\mathcal T$, $x_n,y_n=x_nd_n\in P_{U_n}=x_n P_T$, so $d_n\in\stab(T)$, and suppose by contradiction that all bullet points in Definition \ref{defn:incompatible} are satisfied. Let $\phi:\MCG(\Sigma_g)\to GL_{2g}(\mathbb R)$ be given by the action on homology, where we endow $GL_{2g}(\mathbb R)$ with the $\ell^2$-norm $|| \cdot||$, and we choose a basis for the homology whose first two vectors give a basis for the homology of $T$ coming from two curves $\alpha,\beta$ in $T$.  Let $r_n=\dist_{U_n}(x_n,y_n)$.

\setcounter{claim}{0}
\begin{claim}\label{claim:intersection-number}
    There exists a constant $C>0$ such that $\log ||\phi(d_n)||\geq r_n/C-C$.
\end{claim}

\begin{proofofclaim}{\ref{claim:intersection-number}}
We have $d_n\in \stab(T)$, so $d_n\alpha$ is a curve on $T$. We have $i(\alpha,d_n\alpha)\geq 2^{r_n-C'}$ by \cite[Lem. 2.1]{Hempel}, where $i(\cdot,\cdot)$ is the intersection number and $C'$ is a constant. On the other hand, $i(\alpha,d_n\alpha)$ is the absolute value of a single entry of the matrix $\phi(d_n)$ (the entry corresponding to the component along $\beta$ of $\phi(d_n)([\alpha])$). Therefore $||\phi(d_n)||\geq 2^{r_n-C'}$, and we are done.
\end{proofofclaim}

    The second and third bullet points in Definition \ref{defn:incompatible}, and the fact that $\mathcal A=\mathcal I_g$, imply $\lim \dist_{\MCG(\Sigma_g)}(1,d_n)/r_n=0$, but this contradicts the claim since $\log ||\phi(d_n)||$ is linearly bounded from above by $\dist_{\MCG(\Sigma_g)}(1,d_n)$.
\end{proof}

We are now ready to prove our second main theorem.

\torellijohnsonthm*

\begin{proof}
It suffices to show that $F_2^{g-1}$ coarsely embeds into $\mathcal J_g$, and that $F_2^g$ cannot coarsely embed into $\mathcal I_g$.

For the former, we can find $g-1$ disjoint 4-holed spheres in $\Sigma_g$, each containing an essential curve which is separating in $\Sigma_g$, see Figure \ref{fig:spheres}.

\begin{figure}[h]
\includegraphics[]{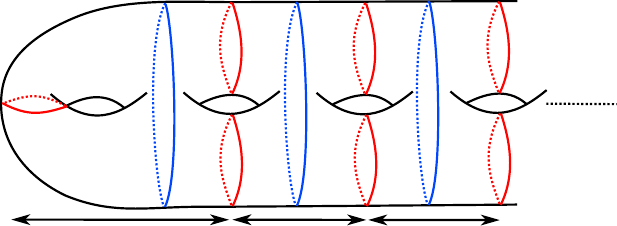}
\caption{The figure illustrates the pattern required to fit $g-1$ 4-holed spheres, each containing a separating curve, on $\Sigma_g$. Three 4-holed spheres are indicated with double arrows.}\label{fig:spheres}
\end{figure}

Dehn-twisting this curve gives another such curve inside each 4-holed sphere. Powers of the Dehn twists around these two curves generate a free group, and the free groups coming from different 4-holed spheres commute, yielding a $F_2^{g-1}$ subgroup, as required.

To show that $F_2^g$ cannot coarsely embed into $\mathcal I_g$ we will use Corollary \ref{cor:incompatible} with $X$ the mapping class group and $A=\mathcal I_g$ (note that the corollary applies to mapping class groups by Lemma \ref{lem:hhg_std}). For $\mathcal T$ we take the set of all the 1-holed tori, which is incompatible with $\mathcal I_g$ by Lemma \ref{lem:which-matrix-norm}. In view of Corollary \ref{cor:incompatible}, we are left to argue that $\Sigma_g$ does not contain $g$ pairwise disjoint subsurfaces of complexity a least $1$, none of which is a one-holed torus. Any such subsurface has Euler characteristic at most $-2$, so the conclusion easily follows, using that the Euler characteristic is additive under gluing subsurfaces along circles.
\end{proof}

We now move on to surface braid groups.

\begin{lemma}
\label{lem:braid_prod_free}
For $g\geq 3$ and $n\geq 5$,  there is  a subgroup of $B_n(\Sigma_g)$ isomorphic to $F_2^{k}$, where $k=2+\lceil (n-1)/2\rceil$. Moreover, $B_n(\Sigma_n)$ contains a subgroup isomorphic to $F_2^n$. 
\end{lemma}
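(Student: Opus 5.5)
Surface braid groups are realized inside mapping class groups of punctured surfaces through the Birman exact sequence, and I would construct the free products directly inside $B_n(\Sigma_g)=\pi_1(\mathrm{Conf}_n(\Sigma_g))$.

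\emph{Commuting pieces.} The basic principle is that if $\Sigma_g$ contains pairwise disjoint subsurfaces $S_1,\dots,S_m$, and we distribute the $n$ points among them (some $S_j$ possibly receiving no points), then braids supported in distinct $S_j$ commute. We therefore obtain a homomorphism $\prod_j B_{n_j}(S_j)\to B_n(\Sigma_g)$, where $B_{n_j}(S_j)$ is the braid group of $n_j$ points in $S_j$ with the boundary fixed. For injectivity I would pass to the punctured mapping class group. For $g\geq 2$ the Birman exact sequence identifies $B_n(\Sigma_g)$ with the kernel of $\MCG(\Sigma_{g,n})\to\MCG(\Sigma_g)$. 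Thus it suffices to exhibit free subgroups $F_2\leq B_n(\Sigma_g)$ generated by point-pushes or braid elements supported in the disjoint pieces $S_j\subset\Sigma_{g,n}$. Subgroups of $\MCG(\Sigma_{g,n})$ supported on disjoint essential non-annular subsurfaces generate a direct product provided each factor acts faithfully on its own piece, which holds since mapping classes supported on disjoint subsurfaces commute and the restriction to each piece recovers the factor. To get $F_2$ in each factor, I would take two elements acting as independent pseudo-Anosovs on the complexity-$\geq1$ piece, for instance two high powers of Dehn twists about filling curves, as in the Torelli argument above. The ping-pong lemma then gives $F_2$. These elements must lie in the kernel of the forgetful map, and this is automatic if the piece becomes a disk or an annulus after filling in the punctures. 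Concretely, I would use a disk with $2$ punctures (forgetting the punctures gives a disk, so twist curves in it lie in the point-pushing kernel), or a disk with more punctures, or a pair of pants.

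\emph{Counting for $B_n(\Sigma_g)$.} I would group the $n$ points into pairs. Each pair lies in a disk with two punctures, but that surface has complexity $0$ (it is $\Sigma_{0,3}$-like with boundary). Instead I would use an annulus, which is essential in $\Sigma_g$, containing $2$ punctures: a sphere with $4$ holes. For the genus, I would use two disjoint pieces of the form ``handle with a puncture removed'' whose point-push subgroups are free of rank $\geq2$. Point-pushing groups $\pi_1$ of a one-holed torus are free and lie in the kernel. Two disjoint one-holed tori, each containing one puncture, give $2$ factors using $2$ points, and with $g\geq 3$ there is room for a third handle acting as a chain. The remaining $n-2$ points are then placed in pairs in disjoint annuli, which are essential as long as they separate from the rest, using $\lceil (n-1)/2\rceil$ further pieces. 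This step requires care with the bookkeeping: arrange the pieces along a chain through the genus, as in Figure~\ref{fig:spheres}, giving $2+\lceil (n-1)/2\rceil$ disjoint pieces, each containing at least one puncture and having complexity at least $1$ in $\Sigma_{g,n}$, with a free point-push-type subgroup.

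\emph{The case $B_n(\Sigma_n)$.} With genus $n$, I would place one puncture in each of $n$ disjoint one-holed tori. The point-pushing subgroup of each, namely $\pi_1$ of the one-holed torus, is $F_2$, and these subgroups commute. Injectivity of $F_2^n$ follows because the point-pushes restrict faithfully on each piece. I expect the main obstacle to be the faithful-embedding and kernel-membership verification, together with exactly matching the count $2+\lceil (n-1)/2\rceil$ in the first part. I would first try the Birman exact sequence approach described above, checking that the configuration of punctured annuli and punctured tori fits in $\Sigma_g$ for $g\geq 3$.
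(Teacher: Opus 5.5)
Your framework matches the paper's: disjoint punctured subsurfaces of complexity at least $1$, each carrying a free subgroup of the point-pushing kernel, commuting because their supports are disjoint. Your treatment of $B_n(\Sigma_n)$ via $n$ once-punctured one-holed tori is also the paper's argument. The gap is the count in the first part.

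With only two once-punctured one-holed tori, $n-2$ points remain. Putting them in pairs into annuli, with a leftover point (if any) in a pair of pants, gives $\lceil (n-2)/2\rceil$ further pieces, not $\lceil (n-1)/2\rceil$ as you claim. The two agree for odd $n$. For $n=2m$, however, you get only $m+1$ factors instead of $k=m+2$; for $n=6$ that is four instead of five. Leaving the third handle as a ``chain'' is exactly where the missing factor is lost. An unpunctured handle cannot help: mapping classes supported on an unpunctured one-holed torus map injectively to $\MCG(\Sigma_g)$, so they meet $B_n(\Sigma_g)$ trivially.

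The fix, which is what the paper does in Figure~\ref{fig:braids}, has three steps:
\begin{enumerate}
\item Place one puncture in each of three disjoint one-holed tori; this is where $g\geq 3$ enters.
\item Pair the remaining $n-3$ points into annuli.
\item When $n-3$ is odd, put the last point in the complementary pair of pants.
\end{enumerate}
This gives $3+\lceil (n-3)/2\rceil=2+\lceil (n-1)/2\rceil$ pieces.

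There is also a smaller inaccuracy. Lying in the kernel of $\MCG(\Sigma_{g,n})\to\MCG(\Sigma_g)$ is not automatic for a twice-punctured annulus. A twist about a curve separating one boundary component and one puncture from the other two maps to the twist about the core of the annulus, which is nontrivial in $\MCG(\Sigma_g)$. Use genuine point-pushes instead: push one puncture around $\pi_1$ of the annulus minus the other puncture, which is free of rank $2$. Alternatively, pass to a rank-$2$ free subgroup of the kernel of the induced map $F_2\to\mathbb Z$. Finally, powers of Dehn twists are not pseudo-Anosov, although high powers of twists about two filling curves do generate $F_2$.
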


\begin{proof}
For the first part, Figure \ref{fig:braids} illustrates how to subdivide $\Sigma_g$ into $3+\lceil (n-3)/2\rceil =2+\lceil (n-1)/2\rceil$ disjoint subsurfaces each supporting a free subgroup of $B_n(\Sigma_g)$. As usual mapping classes supported on disjoint subsurfaces commute.
\begin{figure}[h]
    \includegraphics[scale=0.8]{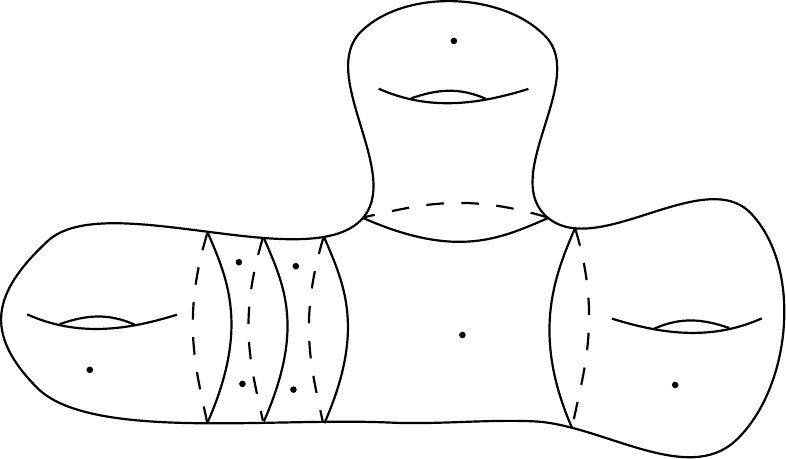}
    \caption{Three punctures get placed in a subsurface with non-zero genus, and the others get distributed in pairs into annuli, and if one remains it gets placed in the pair of pants in the middle of the figure.}\label{fig:braids}
\end{figure}

    For the second part, $\Sigma_{n,n}$ contains $n$ once-punctured and one-holed tori, and these are each the support of a free subgroup of $B_n(\Sigma_g)$.
\end{proof}

\braids*

\begin{proof}
By Lemma \ref{lem:braid_prod_free}, $B_n(\Sigma_n)$ contains a subgroup isomorphic to $F_2^n$, so we are left to argue that $B_n(\Sigma_2)$ does not contain a coarsely embedded copy of $F_2^n$. This is because, by Theorem \ref{thm:mcg_bushy_rank}, $\MCG(\Sigma_{n,n})$ does not, and $B_n(\Sigma_2)$ is a subgroup.
 \end{proof}

Recall that $\mathcal J_g$ is known to be finitely generated for $g\geq 4$ \cite{CEP:J_fin_gen} (see also \cite{EH:J_fin_gen}), but this is unknown for $g=3$.

\begin{proof}[Proof of Corollary \ref{cori:torelli_vs_braids}]
   The cohomological dimension of $\mathcal I_g$ is $3g-5$, so if it was quasi-isometric to a surface braid group $B_n(\Sigma_g)$ we would have $n=3g-5$ in view of \cite{dim_surface_braid}. But by Theorem \ref{thm:Torelli_Johnson} and Lemma \ref{lem:braid_prod_free} we would need to have
   $$\lfloor (3g-5-1) /2\rfloor -1 \leq g-1,$$
   which does not hold for $g\geq 6$.

   The cohomological dimension of $\mathcal J_g$ is $2g-3$ \cite{dimension_torelli}, while that of $B_n(\Sigma_h)$ is $n$ \cite{dim_surface_braid}. Hence, if those are quasi-isometric, we must have $n=2g-3$, which is at least 5. However, by Theorem \ref{thm:Torelli_Johnson} and Lemma \ref{lem:braid_prod_free}, we should also have 
$$ 2+\lceil (2g-4)/2 \rceil \leq g-1,$$
but the left-hand side is equal to $g$, a contradiction.
 \end{proof}

\subsection{Bestvina-Brady groups}

The following is a more precise version of Theorem \ref{thmintro:BB}.

\begin{thm}\label{thm:bestvina-brady}
For all $n\geq 2$ and arbitrarily large $m\gg n$, there exists a finite simplicial graph $\Gamma$ such that the Bestvina-Brady kernel $B(\Gamma)$ has the following properties:
\begin{itemize}
    \item $B(\Gamma)$ is of type $F_2$ but not $FP_3$,
    \item $B(\Gamma)$ has Dehn function $\asymp n^4$,
    \item $asdim(B(\Gamma))=m$,
    \item $F_2^k$ coarsely embeds in $B(\Gamma)$ if and only if $k\leq n$.
\end{itemize}
\end{thm}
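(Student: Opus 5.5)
Take $\Gamma$ to be a join $\Gamma=\Lambda_1*\Lambda_2*\cdots*\Lambda_n$, where each factor is chosen so that the Bestvina--Brady results, the Dehn function results for Bestvina--Brady groups, and asymptotic dimension can all be controlled. Recall two facts.
\begin{itemize}
\item $B(\Gamma)$ is of type $F_m$ exactly when the flag complex $L=L(\Gamma)$ is $(m-1)$--connected. It is of type $FP_{m}$ exactly when $L$ is homologically $(m-1)$--connected.
\item The join of $n$ graphs has flag complex equal to the join of the flag complexes.
\end{itemize}
Concretely, take $\Lambda_1,\Lambda_2$ with $L(\Lambda_1)*L(\Lambda_2)$ simply connected but with $H_2\neq 0$. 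For instance, let $L(\Lambda_1)$ and $L(\Lambda_2)$ each be a flag triangulation of a circle or of a wedge of circles. The join of two circles is $S^3$, so to get $F_2$ but not $FP_3$ I would rather take $L(\Lambda_1)$ a connected flag complex with nontrivial $H_1$, and the remaining factors discrete sets of at least two points. The precise tuning is routine bookkeeping with the join formula $\tilde H_*(A*B)\cong\bigoplus \tilde H_i(A)\otimes\tilde H_j(B)$ shifted by one. To make the asymptotic dimension equal to $m$, include one factor whose flag complex has dimension about $m-n$, for example a flag triangulation of a high-dimensional sphere or disc; this factor is a simplex-like piece and contributes no bushy domains. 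Since $B(\Gamma)$ has codimension one in $A_\Gamma$ and $\asdim A_\Gamma=\dim L+1$, we should get $\asdim B(\Gamma)=\dim L$, by a cohomological dimension argument in the $F$ case or by citing known asdim computations for Bestvina--Brady groups. The Dehn function $\asymp n^4$ should come from the known results: Bestvina--Brady groups of simply connected flag complexes have Dehn function at most quartic, with quartic realised in specific join examples (Abrams--Brady--Dani--Young). I would arrange $\Gamma$ to contain such an example as a join factor and the rest as cone-like pieces so that the Dehn function stays $\asymp n^4$. This matching of existing constructions is fiddly but not the core.

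For the $F_2^k$ statement, the upper bound uses Corollary~\ref{cor:incompatible}. Apply it with $X$ the universal cover of the Salvetti complex of $A_\Gamma$, which is a standard HHS: domains are subgraphs, and $\mathcal CU$ is bushy exactly when $U$ is non-join with at least two vertices, and a quasiline when $U$ is a single vertex. Take $A=B(\Gamma)$. By Corollary~\ref{cor:bushy-QR}, $k$ is at most the maximal number of pairwise orthogonal (joined) non-join subgraphs with at least two vertices. For a join of $n$ irreducible factors this number is $n$, provided each factor is a non-join. If some factor must be a vertex or a join, incompatibility is needed: the height function $A_\Gamma\to\integers$ vanishes on $B(\Gamma)$, and a domain on which the height is bounded by a linear function of $\dist_U$ is incompatible with $B(\Gamma)$. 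The lower bound comes from choosing, inside each of $n$ bushy factors, two vertices $u,v$ and the elements $uv^{-1}$ and $u^2v^{-2}$ (or similar) in the kernel. These generate an $F_2$ in $B(\Lambda_i)$, and the factors commute, giving $F_2^n\le B(\Gamma)$, which is coarsely embedded since subgroups are.

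The main obstacle is reconciling all four bullets simultaneously. The requirement that the orthogonality number of bushy domains equal exactly $n$ while $\asdim=m\gg n$ forces the high-dimensional part of $L$ to lie inside non-bushy (join or single-vertex) subgraphs. The structure $L=L_0*(\text{discrete}_1)*\cdots$ must then be checked against the finiteness and Dehn function constraints. I would first try $\Gamma=\Delta*D_1*\cdots*D_n$, where $\Delta$ is a large clique-like flag piece providing dimension and $D_i$ are non-join graphs providing bushy factors. I would then verify via incompatibility that $\Delta$ adds no orthogonal bushy direction, since it is a join and abelian-like.
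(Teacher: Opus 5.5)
Your decision to take $\Gamma$ to be a join is a genuine gap. It is incompatible with the first two bullets, and the ``routine bookkeeping'' you defer cannot succeed.

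\emph{Finiteness.} If $A$ is $a$--connected and $B$ is $b$--connected, then $A*B$ is $(a+b+2)$--connected. So a join of $j$ nonempty flag complexes is $(j-2)$--connected. Your $L=L(\Delta)*L(D_1)*\cdots*L(D_n)$ is therefore $(n-1)$--connected, hence $2$--connected for all $n\geq 3$. Bestvina--Brady then makes $B(\Gamma)$ of type $F_3$, in particular $FP_3$. The dimension-raising factor breaks the first bullet for every $n$: a clique or disc factor makes $L$ contractible (so $B(\Gamma)$ is of type $F$), and a factor $S^d$ with $d\geq 2$ makes $L$ at least $d$--connected.

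\emph{Dehn function.} The criterion the paper uses needs a \emph{maximal} join subgraph $\Lambda$ with $L(\Lambda)$ not simply connected, while $L(\Gamma)$ is simply connected. In a join graph the only maximal join is $\Gamma$ itself, so the criterion never applies. Concretely, if $\Gamma=\Gamma_0*\Gamma_1$ with $\Gamma_0$ a quartic example, its witnessing square $\Lambda$ stops being maximal, since $\Lambda*\Gamma_1$ is a larger join. ``Cone-like'' factors are outright fatal: for $\Gamma=\{v\}*\Gamma_0$ one has $A_\Gamma=\mathbb Z\times A_{\Gamma_0}$, and projection gives $B(\Gamma)\cong A_{\Gamma_0}$, a RAAG with at most quadratic Dehn function.

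The missing idea is to glue the pieces at a single vertex (a wedge) rather than join them. The flag complex of a wedge of graphs is the wedge of the flag complexes, so wedging on pieces with contractible flag complexes does not change the homotopy type of $L$. Also, an induced join of two subgraphs, each with at least two vertices, must lie in a single piece, so the orthogonality number of bushy domains is the maximum over the pieces.

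The paper takes $\Gamma=\Gamma'\vee\Delta_{m'}\vee\Lambda_n$, where:
\begin{itemize}
\item $\Gamma'$ is the $1$--skeleton of a flag triangulation of $S^2$ containing an induced square not in the star of any vertex;
\item $\Delta_{m'}$ is a large clique;
\item $\Lambda_n$ is the cone over the $n$--fold join of two-point discrete graphs.
\end{itemize}
Then the four bullets follow:
\begin{itemize}
\item $L(\Gamma)\simeq S^2$, giving $F_2$ but not $FP_3$.
\item The square remains a maximal join with flag complex $S^1$, giving a quartic Dehn function.
\item The clique raises $\asdim A_\Gamma$ arbitrarily without adding bushy domains, and the asymptotic Hurewicz theorem pins $\asdim B(\Gamma)$ to within one of it.
\item $\Lambda_n$ provides exactly $n$ orthogonal bushy domains, the $S^2$ piece contributes at most $2\leq n$, and $F_2^n\leq B(\Gamma)$ exactly as in your kernel construction.
\end{itemize}
The upper bound is then a direct application of Corollary~\ref{cor:bushy-QR} to the coarse embedding $B(\Gamma)\to A_\Gamma$. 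No incompatibility or height-function argument is needed, because single-vertex domains are quasilines and never count.

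Separately, your claim that a join of $n$ non-join factors has orthogonality number exactly $n$ is false in general: a non-join factor can contain an induced square.
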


\begin{proof}
By \cite{BestvinaBrady} the first property holds provided that the flag complex $L(\Gamma)$ associated to $\Gamma$ is simply connected but not homologically 2-connected. We will arrange $L(\Gamma)$ to be homotopically equivalent to a 2-sphere.

Dehn functions of Bestvina-Brady kernels with $L(\Gamma)$ simply connected are fully classified in \cite[Thm. 1]{BB_Dehn}. In particular, the Dehn function is quartic provided that $\Gamma$ contains a maximal join $\Lambda$ such that $L(\Lambda)$ is not simply connected. 

So, let $\Lambda$ be a square, and let $\Gamma'$ be the $1$--skeleton of a triangulation of $S^2$ such that $\Lambda$ embeds in $\Gamma'$ in such a way that  $\Lambda$ is not contained in the star of a vertex.  

Now let $\Delta_{m'}$ be an $(m'+1)$--clique, and let $\Lambda_n$ be the cone over the $n$--fold join of the discrete graph with two vertices.  Let $\Gamma$ be the wedge of $\Gamma',\Delta_{m'},\Lambda_n$.  

Then $L(\Gamma)$ is homotopy equivalent to $L(\Gamma')$, and hence to $S^2$, since $L(\Lambda_n)$ and $L(\Delta_{m'})$ are contractible.  Hence $B(\Gamma)$ is of type $F_2$ but not $FP_3$.  Since passing from $\Gamma$ to $\Gamma'$ did not introduce any vertex whose star contains $\Lambda$, the Dehn function of $B(\Gamma)$ is still quartic. Since the asymptotic dimension of $A_{\Gamma}$, for $m'$ large enough, is $m'$, the asymptotic dimension of $B(\Gamma)$ is either $m'$ or $m'-1$ by the Hurewicz theorem for asymptotic dimension \cite{Hurewicz_asdim}. 

Finally, the maximal $k$ such that $F_2^k$ coarsely embeds in $B(\Gamma)$ is $n$.  Indeed, the Bestvina-Brady map $\phi:A(\Gamma)\to\integers$ whose kernel is $B(\Gamma)$ restricts on $A(\Lambda_n)\subset A(\Gamma)$ to the corresponding map defined intrinsically on $A(\Lambda_n)$.  Now, $A(\Lambda_n)\cong \langle a\rangle \times \prod_{i=1}^n\langle x_i,y_i\rangle$, where $x_i,y_i,a$ are the vertices of $\Lambda_n$.  Note that $\prod_{i=1}^n\ker(\phi|_{\langle x_i,y_i\rangle})\subseteq \ker(\phi)$, so $\ker(\phi)=B(\Gamma)$ contains $F_2^n$ since each $\phi|_{\langle x_i,y_i\rangle}$ is free of rank at least $2$.

On the other hand, orthogonal bushy domains in the HHS structure of $A_\Gamma$ described in \cite{HHS_I} correspond to joins of subgraphs of diameter at least $2$. Therefore, by construction, there are at most $n$ pairwise orthogonal bushy domains for $A_\Gamma$, so Corollary \ref{cor:bushy-QR} applies.
\end{proof}

\section{Counting}\label{subsec:counting}
From now until the end of the paper, fix a standard HHS $(X,\mathfrak S)$ and let $q:(X,\mathfrak S)\to (\widehat X,\mathfrak S-\mathfrak S_{ql})$ be the factored space from Definition \ref{defn:standard-HHS}.  The metrics on $X$ and $\widehat X$ are respectively $\dist$ and $\hat\dist$.

\begin{assume}\label{assume:X}
By passing to a fixed net in $X$, we can and shall assume that $X$ is a discrete quasigeodesic space, and $X$ has bounded geometry, by Definition \ref{defn:standard-HHS}, so there is a function $\growth:[0,\infty)\to\integers_{\geq0}$ such that for all $R\geq 0$, any $R$--ball in $X$ has cardinality at most $\growth(R)$, and there exists $a_0>1$ such that $\growth(R)\leq a_0^R$ for $R\geq 0$.  By rescaling the metric, we can and shall assume that $\dist(x,y)\geq 1$ whenever $x,y\in X$ are distinct.  Hence the same holds for the metric $\hat\dist$ on $\widehat X$, by \cite[Prop. 2.2]{HHS_III}.
\end{assume}

Let $T_3$ be the $3$--regular simplicial tree, with the usual combinatorial path-metric, denoted $d$.  In this section, we abuse notation and conflate $T_3$ with its vertex set and view $(T_3,d)$ as a discrete space. Fix $k\geq 1$ and let $d$ also denote, say, the $\ell_1$ metric on $T_3^k$ (arguments in this section would be unchanged if $d$ was the $\ell_p$ metric for some other $p\in [1,\infty]$).

Fix a coarse embedding $f:(T_3^k,d)\to (X,\dist)$ and let $\hat f=q\circ f$.

\begin{defn}\label{defn:f-constants}
Let $N_0\in\naturals$ be such that $|f^{-1}(x)|\leq N_0$ for all $x\in X$.  Let $L\geq 1$ be a lipschitz constant for $f$.
\end{defn}

The goal of this subsection is to prove the following proposition:

\begin{prop}\label{prop:counting}
For all $a>1$, there exist $\epsilon,r_0>0$ such that the following holds. Let $o\in T_3^k$ and $r>r_0$. Then
$$\{v\in T_3^k: d(o,v)\leq r, \hat\dist(\hat f(o),\hat f(v))<\epsilon r\}$$
has cardinality at most $a^r$.
\end{prop}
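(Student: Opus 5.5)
The plan is to split the set in the statement according to how far $f(v)$ is from $f(o)$ in $X$, and to bound each piece separately. Since $f$ is $L$--lipschitz, $d(o,v)\le r$ gives $\dist(f(o),f(v))\le Lr$. Since $|f^{-1}(x)|\le N_0$, the number of $v$ in the set is at most $N_0$ times the number of points $x\in X$ such that $\dist(f(o),x)\le Lr$ and $\hat\dist(f(o),x)<\epsilon r$. The question therefore moves entirely into $X$. We must show that, for $x_0$ fixed, the number of $x\in X$ with $\dist(x_0,x)\le Lr$ and $\hat\dist(x_0,x)<\epsilon r$ is at most $a^{r}/N_0$ once $\epsilon$ is small and $r$ is large. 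Bounded geometry (Standing Assumption~\ref{assume:X}) by itself only gives $a_0^{Lr}$, so the constraint on $\hat\dist$ has to supply the saving.

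To count these points I will use the paths described in the outline. Join $x_0$ to $x$ by a hierarchy path. By the distance formula in $X$ and in $\widehat X$, together with Lemma~\ref{lem:initial-M}, the relevant domains for $(x_0,x)$ split into two groups. The bushy ones contribute in total at most $O(\epsilon r)$, because that is the factored distance. The quasiline ones (in $\mathfrak S_{ql}$, which are $\nest$--minimal) can contribute up to $O(Lr)$. I want to factor the path as $\alpha_0\beta_1\alpha_1\cdots\beta_m\alpha_m$. The total length of the $\alpha_i$ should be $O(\epsilon r)$. Each $\beta_i$ should join $x_i$ to $x_{i+1}$, where these two points differ essentially only in a single quasiline coordinate $\mathcal CU_i$, so that $x_{i+1}$ lies in a coarse parallel copy of $F_{U_i}$ through $x_i$, which is a quasiline. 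Since such points are uniformly linearly many, this is the content of the lemmas on linear growth and on sequences referred to in the outline, and I will prove them via Remark~\ref{rem:product-region-facts} and the distance formula. The number $m$ of quasiline segments should be $O(\epsilon r)$. To get this I will merge segments and ignore quasiline domains with small projections by setting a threshold: only domains whose projection exceeds the threshold $s$ get their own $\beta_i$, and the rest are absorbed into the $\alpha$'s at bounded cost. I will then bound the number of such domains by comparing with the bushy terms $\hat\sigma$, using orthogonality and consistency among the minimal domains involved. This is the step I find least certain.

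With the factorization in hand, fix the sequence of lengths $(|\alpha_0|,|\beta_1|,\dots,|\alpha_m|)$. The number of endpoints is at most
\[\prod_i \growth(|\alpha_i|+c)\cdot\prod_i (c\,|\beta_i|+c)\le a_0^{O(\epsilon r)}\cdot c^{\,m}\cdot \prod_i(|\beta_i|+1).\]
Since $\sum|\beta_i|\le Lr$ and $m=O(\epsilon r)$, the product $\prod(|\beta_i|+1)$ is at most $(Lr/m+1)^m$, which is $e^{O(\epsilon\log(1/\epsilon)) r}$. It then remains to count the possible length sequences. These are compositions of an integer at most $Lr+O(\epsilon r)$ into about $2m=O(\epsilon r)$ parts, so there are at most $\binom{Lr+2m}{2m}\le e^{O(\epsilon\log(1/\epsilon))r}$ of them. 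This is the partition estimate of Lemma~\ref{lem:partition-counting}. Every factor therefore has the form $e^{\eta(\epsilon)r}$ with $\eta(\epsilon)\to0$ as $\epsilon\to 0$. Choosing $\epsilon$ so that the total exponent is below $\log a$, and then $r_0$ large enough to absorb $N_0$ and the additive constants, proves the proposition.

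The main obstacle is the path decomposition, and in particular showing that the number $m$ of quasiline pieces is sublinear. If $m$ were comparable to $r$, the factor $\prod(|\beta_i|+1)$ would already destroy the bound. I would first try to bound $m$ by taking the hierarchy path's quasiline pieces to be maximal. Between consecutive maximal pieces the path must make bushy progress or pass through a $\nest$--larger domain, and the bounded geodesic image theorem should then charge each switch to a definite amount of $\hat\dist$.
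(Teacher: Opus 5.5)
Your architecture matches the paper's. You reduce to counting points of $X$ via $N_0$ and $L$. You then join $x_0$ to $x$ through points whose odd steps change a single quasiline coordinate and whose even steps have total length $O(\epsilon r)$. Odd steps are counted linearly, even steps by bounded geometry, and the sum over compositions is handled by Lemma~\ref{lem:partition-counting}. Skipping the paper's split according to whether $\dist(f(o),f(v))<\epsilon_2 r$ is harmless if you state your bounds as $O(\hat\dist(x_0,x))+O(1)$.

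The gap is exactly the step you flag, and the mechanism you suggest for it fails. Maximal pieces of a hierarchy path cannot be what you count. Take $\integers^2$ with $\mathfrak S=\{S,U_1,U_2\}$, where $\mathcal CS$ is a point and $U_1\orth U_2$ are quasilines. This is a standard HHS with $\widehat X$ bounded. The alternating staircase from $(0,0)$ to $(n,n)$ is a hierarchy path. Along any subpath the changes in the two coordinates differ by at most $1$. Hence any subdivision into pieces on which at most one quasiline coordinate moves by more than a fixed threshold $M$ has at least roughly $n/M$ pieces, although $\hat\dist=O(1)$. The switches here are between orthogonal domains, so there is no bushy progress to charge them to.

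Transverse switches $U\prec U'$ fare no better. Bounded geodesic image in a common bushy $W$ only says that the $\mathcal CW$--geodesic passes near $\rho^U_W$ and $\rho^{U'}_W$. These two points may be at bounded, sub-threshold distance, and $P_U$, $P_{U'}$ may coarsely intersect. So a single switch need not be visible in $\hat\dist$ at all, and no per-switch charge exists.

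The missing idea is to count relevant domains rather than path pieces, and to bound their number by an aggregate argument. The paper builds the sequence from scratch:
\begin{itemize}
\item It lists $\mathfrak S_{ql}\cap\relevant_K(x,y)$ in successive layers of $\prec$--minimal elements for the partial order of \cite[Prop.~2.8]{HHS_II}, in which orthogonal domains are incomparable.
\item Each odd step is obtained by gating to $P_{U_i}$ and applying realisation.
\item Lemma~\ref{lem:initial-M} shows the even steps have no large quasiline projections, and their bushy terms sum to at most $2\dist_V(x,y)$ for each $V$.
\end{itemize}
So there is exactly one odd step per relevant quasiline domain. Sublinearity then becomes $|\mathfrak S_{ql}\cap\relevant_{100E}(x,y)|\le C\,\hat\dist(x,y)$, which is Claim~\ref{claim:large-link-application} in the proof of Lemma~\ref{lem:sequence}. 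Its proof has three steps:
\begin{itemize}
\item By the passing-up lemma \cite[Lem.~2.5]{HHS_II}, among any $N'$ such domains one is properly nested in some $V$ with $\dist_V(x,y)>100E$. This $V$ is bushy, since $\mathfrak S_{ql}$ consists of $\nest$--minimal elements.
\item By the large link axiom, the domains associated in this way to a fixed $V$ number $O(\dist_V(x,y))$.
\item Summing over $V$ and applying the distance formula in $\widehat X$ gives the bound.
\end{itemize}
Your proposed tools (bounded geodesic image, consistency, orthogonality) only give information about one pair of domains at a time. They do not yield this linear count without something playing the role of passing-up plus large links.

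There is also a smaller omission. In your count $\prod_i(c|\beta_i|+c)$ the domain $U_i$ is not given. You therefore also need that only boundedly many $U\in\mathfrak S_{ql}$ can be the sole relevant domain for a point far from $x_i$. The paper gets this from bounded geometry in the proof of Lemma~\ref{lem:linear}.
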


We prove the proposition at the end of the section, after obtaining the needed lemmas.  In the next lemma, note the difference with Proposition \ref{prop:counting}: we count $v\in T_3^k$ that get sent $\epsilon_2r$--close to $f(o)$ as measured in $X$, not $\widehat X$.  

\begin{lem}\label{lem:easy-count}
For all $b>1$ there exist $\epsilon_2,r_0>0$ such that the following holds. Let $o\in T_3^k$ and $r>r_0$. Then
$$\{v\in T_3^k: d(o,v)\leq r, \dist(f(o),f(v))<\epsilon_2 r\}$$
has cardinality at most $b^r$.
\end{lem}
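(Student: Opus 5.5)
The plan is a direct counting argument that uses only that $f$ is uniformly finite-to-one together with the bounded geometry of $X$ (Assumption \ref{assume:X} and Definition \ref{defn:f-constants}). Fix $o\in T_3^k$ and $r>0$, and let $S$ be the set in the statement. For every $v\in S$ we have $\dist(f(o),f(v))<\epsilon_2 r$, so $f(S)$ lies in the ball of radius $\epsilon_2 r$ about $f(o)$ in $X$. This ball contains at most $\growth(\epsilon_2 r)\le a_0^{\epsilon_2 r}$ points. Each point of $X$ has at most $N_0$ preimages under $f$, so
$$|S|\le N_0\,|f(S)|\le N_0\, a_0^{\epsilon_2 r}.$$

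The constraint $d(o,v)\le r$ is not even needed here. It only reflects that $T_3^k$ has exponential growth, so $b^r$ is a genuine saving over the trivial count. It remains to choose the constants. Given $b>1$, take $\epsilon_2>0$ with $a_0^{\epsilon_2}<b$, for instance $\epsilon_2=\frac{\log b}{2\log a_0}$, so that $a_0^{\epsilon_2}=b^{1/2}$. Then $|S|\le N_0 b^{r/2}$, and this is at most $b^r$ as soon as $b^{r/2}\ge N_0$. So it suffices to take $r_0=2\log N_0/\log b$ (or $r_0=1$ if $N_0=1$).

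There is one small technical point. Assumption \ref{assume:X} replaces $X$ by a net, and $f$ should then be composed with a closest-point map to that net. This composition is still a coarse embedding, with a possibly larger but still uniform bound $N_0$ on fibre sizes: a coarse embedding sends only boundedly many points of the uniformly discrete space $T_3^k$ into any bounded set, since its lower control function tends to infinity. The constants $N_0$ and $a_0$ do not depend on $o$ or $r$, so the bound is uniform.

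I do not expect a real obstacle. The only care needed is to check that $N_0$ is finite and uniform. This follows from the coarse embedding's lower control function $\rho_-$: points in the same fibre, or in the same bounded ball, lie within distance $\rho_-^{-1}$ of that radius of one another in $T_3^k$, and balls in $T_3^k$ are finite with uniformly bounded size.
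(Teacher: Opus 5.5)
Your proposal is correct and is essentially the paper's proof. You bound the set by $N_0\growth(\epsilon_2 r)\le N_0a_0^{\epsilon_2 r}$ using the uniform fibre bound and bounded geometry, then choose $\epsilon_2$ with $a_0^{\epsilon_2}<b$ and $r_0$ large enough to absorb $N_0$; your explicit choices $a_0^{\epsilon_2}=b^{1/2}$ and $r_0=2\log N_0/\log b$ are a concrete instance of the paper's choices.
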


\begin{proof}
For any $\epsilon>0$ and $r\geq 0$, the set of $v\in T_3^k$ such that $\dist(f(o),f(v))\leq \epsilon r$ has cardinality at most $N_0\growth(\epsilon r)\leq N_0 a_0^{\epsilon r}$.  Choose $\epsilon\in(0,1)$ so that $a_0^\epsilon<b$ and choose $r_0$ so that $N_0<(b/a_0^\epsilon)^r$ for $r>r_0$.
\end{proof}

Below, we consider a pair of points $x,y\in X$ whose images in $\widehat X$ are much closer than the original distance $d(x,y)$ in $X$, as is the situation, say, with $x=f(o)$ and $y=f(v)$ for $o,v$ from Proposition \ref{prop:counting}.  Fixing $x$, we are interested in the number of possibilities for $y$.  Lemma \ref{lem:sequence} below will give us a template for counting them, by producing a sequence $x=x_0,\ldots,x_n=y$ such that, for odd $i$, the points $x_i,x_{i+1}$ differ significantly on at most one projection, to some $U\in\mathfrak S_{ql}$, and the total of these $\dist(x_i,x_{i+1})$ is linear in $\dist(x,y)$; meanwhile, the total of the $\dist(x_i,x_{i+1})$ for even $i$ is a tiny proportion of $\dist(x,y)$.  For $i$ odd, Lemma \ref{lem:linear} will constrain the number of choices for $x_{i+1}$ given $x_i$.  So, roughly speaking, this will give us a bound on how many sequences $x_0,\ldots x_n$ (and hence how many choices for $y$) are possible, given the sequence of distances $d(x_i,x_{i+1})$ for $i$ odd, which sum to at most a fixed multiple of $d(x,y)$.  The remaining thing to consider is how many such sequences of distances there are, which motivates the following definition and lemma.

\begin{defn}\label{defn:partitions}
Let $s\in\integers_+, n\in \reals_+$ and let $\partition(s,n)$ be the set of tuples $(a_1,\ldots,a_m)$, $m\leq n$, of positive integers such that $\sum_{i=1}^m a_i= s$.  Let $P(s,n)=|\partition(s,n)|$.
\end{defn}

\begin{lem}\label{lem:partition-counting}
    For all $c>1$ and all sufficiently small $\epsilon>0$,  $P(s,\epsilon s)\leq c^s$ for all  $s>0$.
\end{lem}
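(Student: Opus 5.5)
We need to bound the number $P(s,\epsilon s)$ of compositions of $s$ into at most $\epsilon s$ positive parts. The plan is a direct binomial count followed by an entropy estimate; we do not need the sharp partition asymptotics.

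First, count the compositions exactly. A tuple $(a_1,\ldots,a_m)$ of positive integers with $\sum_i a_i=s$ is determined by choosing $m-1$ cut points among the $s-1$ gaps. Hence
$$P(s,\epsilon s)=\sum_{m=1}^{\lfloor \epsilon s\rfloor}\binom{s-1}{m-1}\leq \sum_{j=0}^{\lfloor \epsilon s\rfloor}\binom{s}{j}.$$
Note that $P(s,\epsilon s)=0$ when $\epsilon s<1$, so those $s$ are trivial.

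Second, for $\epsilon\leq 1/2$ use the standard bound
$$\sum_{j\leq \epsilon s}\binom{s}{j}\leq 2^{H(\epsilon)s},\qquad H(\epsilon)=-\epsilon\log_2\epsilon-(1-\epsilon)\log_2(1-\epsilon).$$
This follows from the inequality
$$1=(\epsilon+(1-\epsilon))^s\geq \sum_{j\leq\epsilon s}\binom sj\epsilon^j(1-\epsilon)^{s-j}\geq \sum_{j\leq \epsilon s}\binom sj\epsilon^{\epsilon s}(1-\epsilon)^{(1-\epsilon)s},$$
where the second step uses that $\epsilon/(1-\epsilon)\leq 1$, so $\epsilon^j(1-\epsilon)^{s-j}$ is decreasing in $j$.

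Third, since $H(\epsilon)\to 0$ as $\epsilon\to 0$, for a given $c>1$ choose $\epsilon_c>0$ so that $2^{H(\epsilon)}\leq c$ for all $0<\epsilon\leq\epsilon_c$. This uses that $H$ is increasing on $(0,1/2]$, so all smaller $\epsilon$ also work. The result is $P(s,\epsilon s)\leq c^s$ for all $s>0$, uniformly in $s$, with no $r_0$-type threshold needed.

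The only step needing care is the tail-sum estimate, and it is classical. If one prefers a cruder route, the bound $\binom{s}{j}\leq (es/j)^j$, together with monotonicity of $(es/j)^j$ for $j\leq s$ and the crude factor $\epsilon s+1$ for the number of terms, gives $P\leq (\epsilon s+1)(e/\epsilon)^{\epsilon s}$. The polynomial factor $(\epsilon s+1)$ must then be absorbed: since $(e/\epsilon)^{\epsilon}\to 1$, take it at most $\sqrt c$, and bound $\epsilon s+1\leq 2^{\epsilon s}\cdot 2\leq \ldots$. This works for small $s$ only with extra fiddling, so the entropy bound is the cleaner choice.
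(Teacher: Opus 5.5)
Your proof is correct and follows the paper's argument. You count compositions with $m$ parts by $\binom{s-1}{m-1}$, bound the total by $\sum_{j\le\epsilon s}\binom{s}{j}\le 2^{H(\epsilon)s}$, and choose $\epsilon$ so that $2^{H(\epsilon)}\le c$; the only difference is that you prove the entropy tail bound directly from $(\epsilon+(1-\epsilon))^s=1$ instead of citing it, and your closing aside on the cruder route is unnecessary and can be dropped.
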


\begin{proof} 
Let $s\in\integers_+$ and $n\in\reals_+$.  For each $m\geq 1$, the number of tuples $(a_1,\ldots,a_m)$ with all $a_i\geq 1$ and $\sum_{i=1}^ma_i=s$ is at most ${s-1\choose m-1}$.  Indeed, consider a string $w\in\{0,1\}^*$ consisting of $a_1$ $1$s, then a single $0$, then $a_2$ $1$s, then a single $0$, etc., with a total of $s$ $1$s, a total of $m-1$ $0$s, and no substring $00$.  So $w$ is obtained from the string of $s$ $1$s by replacing $m-1$ of the $s-1$ substrings $11$ with $101$; there are ${s-1\choose m-1}$ ways to do this.  Hence
$$P(s,n)\leq \sum_{m=0}^{\lfloor n\rfloor}{s\choose m}.$$
Let $H:(0,1)\to\reals$ be the binary entropy function, $H(\epsilon)=-\epsilon\log_2\epsilon-(1-\epsilon)\log_2(1-\epsilon)$.  Then $2^{H(\epsilon)s}\leq c^s$ for all $s>0$ provided $\epsilon\in(0,\frac12)$ is sufficiently small in terms of $c$.  For such $\epsilon$, a standard estimate for sums of binomial coefficients (\cite[Lem. 16.19]{FlumGrohe}) gives 
$$P(s,\epsilon s)\leq \sum_{m=0}^{\lfloor\epsilon s\rfloor}{s\choose m}\leq 2^{H(\epsilon)s}<c^s,$$
as required.
\end{proof}

Next, we produce the points $x_0,\ldots,x_n$ mentioned above.

\begin{lem}\label{lem:sequence}
There exist constants $M,K$, with $1\leq K\leq M$, and $\epsilon_0>0$, such that for all $\epsilon\in(0,\epsilon_0)$ the following holds.
    Let $x,y\in X$ be such that $\hat\dist(x,y)<\epsilon \dist(x,y)$. Then there exist $n\leq M\epsilon \dist(x,y)$ and a sequence $x=x_0,\dots,x_n=y$ such that
    \begin{enumerate}[(A)]
        \item $$\sum_{i \text{ even}} \dist(x_i,x_{i+1})\leq M\epsilon \dist(x,y),$$\label{item:even-sum-small}
        \item $$\sum_{i \text{ odd}} \dist(x_i,x_{i+1})\leq M \dist(x,y),$$\label{item:odd-sum-linear}
        \item for $i$ odd, $\relevant_K(x_i,x_{i+1})$ consists of a single $U_i\in\mathfrak S_{ql}$, so $\relevant_M(x_i,x_{i+1})\subseteq \{U_i\}$.\label{item:odd-one-relevant}
    \end{enumerate}
\end{lem}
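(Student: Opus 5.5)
We will build the sequence along a hierarchy path from $x$ to $y$, separating the quasiline contributions from the rest. The starting point is the distance formula, applied in both $X$ and $\widehat X$ with a large common threshold $s$. In $\widehat X$, the sum $\hat\sigma^{s}(x,y)$ runs only over $U\in\mathfrak S-\mathfrak S_{ql}$, so the non-quasiline part of $\sigma^s(x,y)$ is at most $\DF(s)\hat\dist(x,y)+\DF(s)<2\DF(s)\epsilon\dist(x,y)$, provided $\dist(x,y)$ is large in terms of $\epsilon$. The small case can be absorbed by taking $n=1$ with a single even step, or by adjusting constants.

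The quasiline part is then at least $\dist(x,y)/\DF(s)-O(\epsilon\dist(x,y))$ and at most $\DF(s)\dist(x,y)+\DF(s)$. Denote by $U_1,\dots,U_m$ the elements of $\relevant_s(x,y)\cap\mathfrak S_{ql}$. Each has $\dist_{U_j}(x,y)\geq s$, and these distances sum to $O(\dist(x,y))$. We need $m$ itself to be $O(\epsilon\dist(x,y))$. This does not follow from the sum alone, since there could be many quasilines each of size about $s$. To handle this, we will only \emph{isolate} the quasilines $U$ with $\dist_U(x,y)\geq 1/\epsilon$. There are at most $\epsilon\DF(s)\dist(x,y)+\epsilon\DF(s)$ of these. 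The remaining relevant quasilines, each with projection distance below $1/\epsilon$, will be lumped into the even steps. The catch is that their total contribution need not be small. So instead we choose the threshold adaptively: we apply the distance formula with $\lambda$ and a threshold $\tau=\tau(\epsilon)$ large, and use the strengthened distance formula in $X$ with threshold $1/\epsilon$. I expect controlling the small quasilines to be the main obstacle. A cleaner route may be to use the distance formula with a threshold $T$ (independent of $\epsilon$) for the $x_i\to x_{i+1}$ pieces, noting that terms below threshold do not count. Then $m\leq\sigma^{T}/T$, and we pick $T$ of order $1/\epsilon$, accepting that $\DF(T)$ depends on $\epsilon$. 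I will try this first and, if the constants fail to be uniform, fall back to an ordering argument along the path.

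With the isolated quasilines $U_1,\dots,U_m$ fixed, we order them using a $D$--hierarchy path $\gamma$ from $x$ to $y$. Distinct relevant quasilines are pairwise orthogonal or transverse (they are $\nest$--minimal). Transverse ones are ordered along $\gamma$ via the behaviour-of-projections/consistency results of \cite{HHS_II}, and orthogonal ones can be ordered arbitrarily. For each $U_j$ in this order, we use the product region $P_{U_j}$ and gate maps to define $x_{2j-1}$ and $x_{2j}$. Concretely, take $x_{2j-1}=\fgate_{P_{U_j}}(z_{j})$ for the current point $z_j$, and let $x_{2j}$ be obtained from $x_{2j-1}$ by moving only in the $F_{U_j}$ factor, changing its $U_j$-coordinate to that of $y$. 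Then the pair $(x_{2j-1},x_{2j})$ differs only in $\pi_{U_j}$, up to $K$ depending on $E$, which gives property (\ref{item:odd-one-relevant}) together with the inclusion $\relevant_M\subseteq\{U_j\}$. Their distance is comparable to $\dist_{U_j}(x,y)$, so the odd sum is at most $M\dist(x,y)$, giving (\ref{item:odd-sum-linear}).

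For (\ref{item:even-sum-small}), all the points lie coarsely on hierarchy paths in every coordinate. Lemma \ref{lem:initial-M} then says that the even steps introduce no new relevant domains. In each even step the isolated quasilines contribute coarsely nothing, so the distance formula bounds each even step by the non-quasiline part of the sum plus $O(1)$ per step. Summed over $n=O(\epsilon\dist(x,y))$ steps, this gives $M\epsilon\dist(x,y)$. For this final summation, the key point is that the projections of the even steps to each $\mathcal CV$ are coarsely disjoint subsegments of a geodesic, so the sum over $i$ is controlled by $\dist_V(x,y)$ plus an additive error per step.
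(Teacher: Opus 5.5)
There is a genuine gap exactly where you flag it. Your overall architecture matches the paper's:
\begin{itemize}
\item ordering the relevant quasiline domains;
\item for the odd steps, gating into $P_{U_j}$ and then moving only in the $F_{U_j}$ factor;
\item handling the even steps via Lemma~\ref{lem:initial-M} and coarse disjointness of subsegments of geodesics in each $\mathcal CV$.
\end{itemize}
But you never show that the number $m$ of isolated quasiline domains is $O(\epsilon\dist(x,y))$ with constants independent of $\epsilon$, and neither workaround fixes this. Isolating only those $U\in\mathfrak S_{ql}$ with $\dist_U(x,y)\geq 1/\epsilon$ leaves the other relevant quasilines inside the even steps. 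There, a priori, they can carry a definite fraction of $\dist(x,y)$, so (A) fails. Running the distance formula at a threshold $T\sim 1/\epsilon$ only bounds each even step by $\DF(T)$ times the non-quasiline sum plus $\DF(T)$. That gives (A) with $M$ of order $\DF(1/\epsilon)^2$. This breaks the quantifiers of the lemma, since $M$ must not depend on $\epsilon$. That uniformity is essential in the proof of Proposition~\ref{prop:counting}, where factors like $(2/\epsilon)^{M\epsilon Lr}$ and $\lambda^{M\epsilon s}$ must become subexponential as $\epsilon\to 0$. The fallback ordering argument along the path does not address the counting problem at all.

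The missing idea is this: when $\hat\dist(x,y)<\epsilon\dist(x,y)$, the number of $U\in\mathfrak S_{ql}$ with $\dist_U(x,y)\geq 100E$ (a fixed threshold) is at most a uniform multiple of $\hat\dist(x,y)$. So the scenario of many moderately sized quasilines never arises. The paper proves this in three steps.
\begin{itemize}
\item By the passing-up lemma, all but boundedly many such $U$ are properly nested in some $V$ with $\dist_V(x,y)>100E$. This $V$ can be taken $\nest$-minimal with that property, and it lies in $\mathfrak S-\mathfrak S_{ql}$ because quasiline domains are $\nest$-minimal.
\item The large link axiom gives domains $W_1,\dots,W_\ell\propnest V$. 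A second application of passing-up inside these $W_i$ shows that at most $\kappa\dist_V(x,y)$ of the $U$ are attached to each such $V$.
\item Summing, and using the distance formula in $\widehat X$, bounds the count by a constant times $\sum_{V\in\mathfrak S-\mathfrak S_{ql}}\ignore{\dist_V(x,y)}{100E}\lesssim\hat\dist(x,y)<\epsilon\dist(x,y)$.
\end{itemize}
With this bound you can isolate \emph{all} quasilines relevant at a fixed threshold $K$. Then Lemma~\ref{lem:initial-M} shows the even steps have no relevant quasiline domains at threshold $2K+1$. The distance formula at that fixed threshold then gives (A) with $M$ independent of $\epsilon$.
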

\begin{proof}
By Standing Assumption \ref{assume:X}, if $\epsilon>0$ and $x,y\in X$ satisfy $\hat\dist(x,y)< \epsilon\dist(x,y)$, then $x\neq y$ and $\dist(x,y)>1/\epsilon$; we use this freely below.

\textbf{Constants.}  Below, we will use a constant $C\ge 1$ that is chosen sufficiently large in terms of the HHS parameters only.  As we go, we will point out the conditions that $C$ must satisfy, and note that these only depend on the input HHS structures, and could have been imposed at the beginning of this proof. To begin, we require that $C\geq \max\{s_{Id},10E\}$. Let $K=100(C+E+\theta+1)$.  Let $A=\DF(K)$.  Choose $\epsilon_0\in(0,1)$ so that $2A^2\epsilon_0<1$ and $\epsilon_0<\min_{s_{Id}\leq K\leq K_1}\DF(K)^{-1}$.
\setcounter{claim}{0}

\begin{claim}\label{claim:nonempty-relevant}
Let $\epsilon\in(0,\epsilon_0)$. Let $x,y\in X$ satisfy $\hat\dist(x,y)<\epsilon d(x,y)$.  Then 
$$\mathfrak S_{ql}\cap \relevant_T(x,y)\neq\emptyset$$
for all $T\leq K$.
\end{claim}

\begin{proofofclaim}{\ref{claim:nonempty-relevant}}
Let $P:=\sum_{U\in\mathfrak S_{ql}}\ignore{\dist_U(x,y)}{K}$ and $Q:=\sum_{U\in\mathfrak S-\mathfrak S_{ql}}\ignore{\dist_U(x,y)}{K}$. 
If $P=0$ then the distance formula and the assumption $\hat\dist(x,y)<\epsilon\dist(x,y)$ imply $A^{-1}\dist(x,y)-A\leq Q<\epsilon\dist(x,y)$, so since our choice of $\epsilon_0$ implies $\dist(x,y)>1/\epsilon>2A^2$, we have $\dist(x,y)/2A\leq Q<\epsilon\dist(x,y)$, so $2A\epsilon>1$, contradicting our choice of $\epsilon$.
 
Hence $P>0$, so $\mathfrak S_{ql}\cap\relevant_{K}(x,y)\neq\emptyset$, proving the claim. 
\end{proofofclaim}

\textbf{Ordering relevant domains.} 
Fix $\epsilon$ and $x,y\in X$ as in Claim \ref{claim:nonempty-relevant}.  Let $\mathfrak L_T(x,y)=\mathfrak S_{ql}\cap\relevant_K(x,y)$ for $T\leq K$, so Claim \ref{claim:nonempty-relevant} implies $\mathfrak L_T(x,y)\neq\emptyset$.  

By Definition \ref{defn:standard-HHS}, any two distinct elements of $\mathcal L_C(x,y)$ are orthogonal or transverse.  Hence, by \cite[Prop. 2.8]{HHS_II} and the fact that $C>E$, there is a partial order $\prec$ on $\mathcal L_C(x,y)$ such that $U\precneq V$ if and only if $U\transverse V$ and $\dist_U(\rho^V_U,y)\leq E$ (equivalently, by our choice of $M_0$ and \cite[Defn. 1.1.(4)]{HHS_II}, $\dist_V(\rho^U_V,x)\leq E$), and $U,V\in\mathcal L_C(x,y)$ are $\prec$--incomparable if and only if $U\orth V$.  

Write $\mathfrak L_K(x,y)=\{U_{2t-1}\}_{t=1}^m$, where the $U_i$ are labelled (by odd $i$) according to the following inductive construction.  First, let $U_1,\ldots,U_{2m_1-1}$ be all of the $\prec$--minimal elements of $\mathfrak L_K(x,y)$.  Then let $U_{2m_1+1},\ldots,U_{2m_2-1}$ be the $\prec$--minimal elements of $\mathfrak L_K(x,y)-\{U_1,\ldots,U_{2m_1-1}\}$.  Continuing in this way, we have ordered the elements of $\mathfrak L_K(x,y)$ so that:
\begin{itemize}
    \item if $1\leq i<j\leq 2m-1$, then $U_i\precneq U_j$ or $U_i\orth U_j$, 
    \item if $i>1$ is odd, then there exists odd $j<i$ such that $U_j\precneq U_i$ (in particular, $U_i\transverse U_j$) and $j\leq i+2\chi$, were $\chi$ is bounded by the maximal cardinality of a pairwise orthogonal subset of $\mathfrak S$.
\end{itemize}
For each odd $i\leq 2m-1$, let $\gate_i:X\to P_{U_i}$ be the gate map.

\textbf{Constructing points $x_i$.}  Construct a set $\mathcal X_K$ of points $x_i$ as follows.  First, let $x_0=x$ and let $x_1=\gate_1(x)$.  If $i\geq 1$ is odd and $x_i$ has been constructed already, then let $x_{i+1}'=\gate_{i}(y)$.  

By the realisation theorem, \cite[Thm. 3.1]{HHS_II}, since $C$ is sufficiently large there exists $x_{i+1}\in P_{U_{i}}$ such that $\dist_V(x_{i+1},x'_{i+1})\leq C$ for $V\not\perp U_{i}$ and $\dist_V(x_{i+1},x_{j(i)})\leq C$ for $V\orth U_{i}$, where $j(i)-1<i$ is the maximal odd integer with the property that $U_{j(i)}\transverse V$ or $U_{j(i)}\propnest V$.  If no such $j(i)$ exists, use $x_{j(i)}=x_i$.

\begin{figure}[h]
  \begin{overpic}[width=0.65\linewidth]{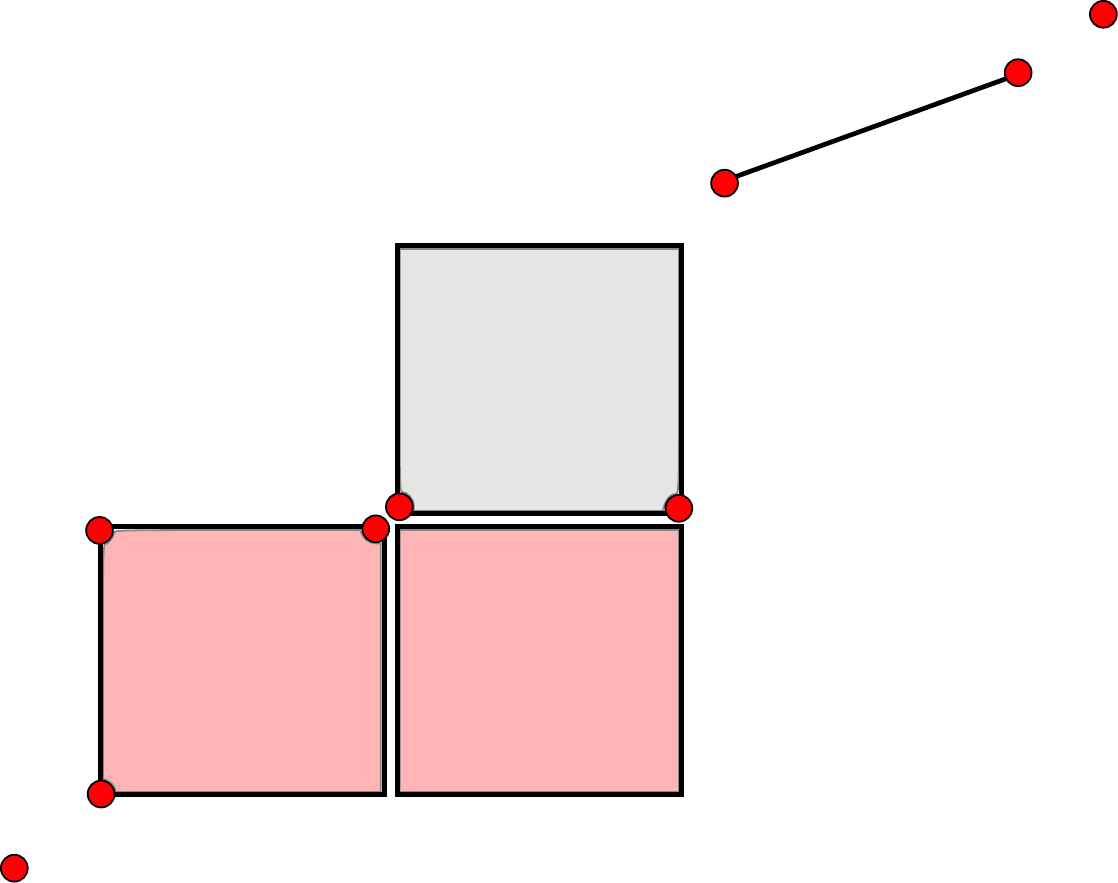}
  \put(-12,1){$x=x_0$}
  \put(3,8){$x_1$}
  \put(2,18){$U_1$}
  \put(3,34){$x_2\asymp x_3$}
  \put(22,27){$U_3$}
  \put(45,27){$U_5$}
  \put(29,34){$x_4$}
  \put(37,36){$x_5$}
  \put(63,34){$x_6$}
  \put(63,65){$x_7$}
  \put(76,70){$U_7$}
  \put(91,68){$x_8$}
  \put(63,45){$V$}
  \put(101,77){$x_9=y$}
  \end{overpic}
    \caption{Constructing $\mathcal X_K$.  The picture takes place in (a neighbourhood of) $\hull(x,y)$.  The red squares lie in standard product regions $F_{U_i}\times F_{U_j}$ for various orthogonal pairs $U_i,U_j$, and the grey square is in $F_{U_5}\times F_V$ for some $V\orth U_5$ with $V\in\mathfrak S-\mathfrak S_{ql}$. The $\prec$--partial order goes from left to right and bottom to top, and $U_1\orth U_3,U_1\orth U_5, U_3\prec U_5$, and $U_1,U_3,U_5\prec U_7$.  We construct $x_i$ as follows: start with $x=x_0$ and gate to $P_{U_1}$ to get $x_1$.  The gate of $y$ to $P_{U_1}$ is coarsely the same as the point marked $x_6$, and we move in the orthogonal complement of $U_1$ to get $x_2$.  Since $U_1\orth U_3$, $x_2$ is already (coarsely) in $P_{U_3}$, so doesn't move (much) when we gate, to get $x_3$.  Continuing in this way produces the red points $x_i,\ 0\leq i\leq 9$.}
    \label{fig:X_K}
\end{figure}

Now suppose that $i\geq 2$ is even and $x_i$ has been constructed already and let $x_{i+1}=\gate_{i+1}(x_i)$. This produces $x_0,x_1,\ldots, x_{2m}$, and we take $x_{2m+1}=y$, and let $\mathcal X_K=\{x_0,\ldots,x_{2m+1}\}$.  

Note that for each odd $i$, we have $x_i,x_{i+1}\in P_{U_i}$, so for $i'\in\{i,i+1\}$ and $V\in\mathfrak S$ with $U_i\propnest V$ or $U_i\transverse V$, we have $\dist_V(\rho^{U_i}_V,x_{i'})\leq E$.

\begin{claim}\label{claim:odd-x_i-at-start}
Let $i\in\{1,\ldots,2m-1\}$ be odd.  Then $\dist_{U_i}(x_i,x_{i'})\leq 10(C+E)$ for $i'\leq i$ and $\dist_{U_i}(x_{i+1},x_{i'})\leq 10(C+E)$ for $i'\geq i+1$.
\end{claim}

\begin{proofofclaim}{\ref{claim:odd-x_i-at-start}}
If $i=1$, then $\dist_{U_1}(x_0,x_1)\leq C$ and $\dist_{U_1}(x_2,y)\leq C$ by construction, provided $C$ is sufficiently large in terms of the HHS parameters.

Assume that $i>1$ is odd.  For any odd $j<i$ such that $U_i\precneq U_i$, and $i'\in \{j,j+1\}$, we have $\dist_{U_i}(\rho^{U_j}_{U_i},x_{i'})\leq E$, while $\dist_{U_i}(\rho^{U_j}_{U_i},x)\leq E$ since $U_j\precneq U_i$.  Hence $\dist_{U_i}(x_{i'},x_i)\leq 3E$.   

If $j<i$ and $U_i\orth U_j$, then by construction we have $\dist_{U_i}(x_j,x_{j(j)})\leq C$.  Also by construction, $\dist_{U_i}(x_i,\rho^{U_{j(j)}}_{U_i})\leq C$, since $U_{j(j)}\precneq U_i$.  Hence $\dist_{U_i}(x_i,x_j)\leq 5(C+E)$, as needed.

This shows $\dist_{U_i}(x_i,x_{i'})$ is uniformly bounded for $i'\leq i$.  A very similar argument proves the part of the claim about $x_{i+1}$.
\end{proofofclaim}

\textbf{Properties of $\mathcal X_K$.}  We now establish some more facts about $\mathcal X_K$ and $\{U_1,\ldots,U_{2m-1}\}$.

\begin{claim}\label{claim:odd-relevant}
Suppose that $2C+1\leq T\leq K-20(C+E)$.  Then $\relevant_T(x_i,x_{i+1})=\{U_i\}$ for all odd $i\in\{1,\ldots,2m-1\}$. Hence
$$\DF(2C+1)^{-1}\cdot \dist_{U_i}(x_i,x_{i+1})\leq \dist(x_i,x_{i+1})\leq \DF(2C+1)\cdot \dist_{U_i}(x_i,x_{i+1}).$$
\end{claim}

\begin{proofofclaim}{\ref{claim:odd-relevant}}
By Claim \ref{claim:odd-x_i-at-start}, $\dist_{U_i}(x_i,x_{i+1})\geq \dist_{U_i}(x,y)-20(C+E)$, which implies $U_i\in\relevant_{T}(x_i,x_{i+1})$.  Now suppose that $V\neq U_i$.  First, Since $U_i$ is $\nest$--minimal, we cannot have $V\propnest U_i$.  Second, by construction, $x_i,x_{i+1}\in P_{U_i}$, so $\dist_V(x_i,x_{i+1})\leq 2C$ if $U_i\propnest V$ or $U_i\transverse V$.  The remaining possibility is $V\orth U_i$, but $\dist_V(x_i,x_{i+1})\leq C$ for such $V$, by construction.  Thus $\dist_V(x_i,x_{i+1})\leq 2C$ unless $V=U_i$.  We have shown $\relevant_{2C+1}(x_i,x_{i+1})=\{U_i\}$, so the last assertion follows from the distance formula.
\end{proofofclaim}

\begin{claim}\label{claim:large-link-application}
There exists $N$, depending only on the HHS parameters, such that $|\mathfrak L_{100E}(x,y)|\leq \frac{(N-1)}{2}\epsilon\dist(x,y)$, and hence $|\mathcal X_K|\leq N\epsilon\dist(x,y)$.
\end{claim}

\begin{proofofclaim}{\ref{claim:large-link-application}}
Let $\mathcal U=\mathfrak L_{100E}(x,y)$.  Given $V\in \mathfrak S$, say that $U\in\mathcal U$ is \emph{associated to} $V$ if $U\propnest V$, and $\dist_V(x,y)>100E$, and $U\propnest W\propnest V$ implies $\dist_W(x,y)\leq 100E$.  Let $\mathcal W$ be the set of all $V$ such that some $U\in\mathcal U$ is associated to $V$, and let $\mathcal U_V$ be the set of $U$ associated to $V$. Since $\mathfrak S_{ql}$ consists of $\nest$--minimal elements, $\mathcal W\subseteq \mathfrak S-\mathfrak S_{ql}$.  

By the ``passing up large projections'' lemma, which is \cite[Lem. 2.5]{HHS_II}, there exists $N'$, depending only on the HHS parameters, such that for any subset $\mathcal V$ of $\mathcal U$ of size $N'$, there exists $V\in\mathcal W$ such that $\mathcal U_V\cap\mathcal V\neq\emptyset$ (that is, some element of $\mathcal V$ is associated to some element of $\mathcal W$).  Hence there exists $\mathcal U'\leq \mathcal U$ such that $|\mathcal U|\leq N'|\mathcal U'|$ and $\mathcal U'\subseteq \bigcup_{V\in\mathcal W}\mathcal U_V$.

Now fix $V\in \mathcal W$.  By the large link axiom \cite[Defn. 1.1]{HHS_II}, there exists $\kappa$, just depending on the HHS parameters, and $\ell\leq \kappa \dist_V(x,y)$ and $W_1,\ldots W_\ell\propnest V$ such that any $U\in\mathcal U$ with $U\propnest V$ must be nested in some $W_i$.  If $U\in\mathcal U_V$, then either $U=W_i$ for some $i$, or $U\propnest W_i$ for some $i$.  Now, $\dist_{W_i}(x,y)\leq 100E$, or else we contradict that $U\in\mathcal U_V$.  Hence another application of the ``passing up'' lemma bounds the number of elements of $\mathcal U_V$ nested in each $W_i$ in terms of the HHS parameters, so up to uniformly increasing $\kappa$, we have $|\mathcal U_V|\leq \kappa\cdot\dist_V(x,y)$.  Hence
\begin{eqnarray*}
    |\mathcal U|\leq N'\sum_{V\in\mathcal W}|\mathcal U_V|
    &\leq&\kappa N'\sum_{V\in\mathfrak S-\mathfrak S_{ql}}\ignore{\dist_V(x,y)}{100E}\\
    &\leq&2\kappa N'\DF(100E)^2\hat\dist(x,y)<2\kappa N'\DF(100E)^2\cdot\epsilon\dist(x,y),
\end{eqnarray*}
using the distance formula and the fact that $\hat\dist(x,y)\geq 1$. This bound implies the claim.
\end{proofofclaim}

\begin{claim}\label{claim:even-i-sum}
Suppose that $T\geq 80(E+C)+4\theta+1$.  Then for all $V\in\mathfrak S$, the following holds:
\begin{itemize}
    \item For all $i$, we have $\relevant_{2T}(x_i,x_{i+1})\subseteq \relevant_T(x,y)$.  In particular, $\relevant_{2K+1}(x_i,x_{i+1})\cap\mathfrak S_{ql}=\emptyset$ for $i$ even, provided $K\geq T$.

    \item $\sum_{i\ \text{even}}\ignore{\dist_V(x_i,x_{i+1})}{2T}\leq 2\dist_V(x,y).$
\end{itemize}
\end{claim}

\begin{proofofclaim}{\ref{claim:even-i-sum}}
For each $V\in\mathfrak S$, let $\gamma_V$ be a geodesic from $\pi_V(x)$ to $\pi_V(y)$.  Let $i$ be even.  We first bound $\dist_V(x_i,\gamma_V),\dist_V(x_{i+1},\gamma_V)$ by $20(E+C)$, via the following steps.

\begin{itemize}
      
    \item If $V=U_{i-1}$, then by Claim \ref{claim:odd-x_i-at-start}, $\dist_V(x_i,y)\leq 10(C+E)$, and, by construction, $\dist_V(x_{i+1},y)\leq 10(C+E)$.

    \item If $U_{i-1}\transverse V$, then $\dist_V(\rho^{{U_{i-1}}}_V,x_i)\leq C$ since $x_i\in P_{U_{i-1}}$.  Hence if $\dist_V(x_i,\{x,y\})>2E+C$, then by consistency, $\dist_{U_{i-1}}(x,\rho^V_{U_{i-1}}),\dist_{U_{i-1}}(y,\rho^V_{U_{i-1}})\leq E$, contradicting that $\dist_{U_{i-1}}(x,y)\geq K$.  Hence $\dist_V(x_i,\{x,y\})\leq 2E+C$. If $U_{i+1}\transverse V$ then, similarly, $\dist_V(x_{i+1},\{x,y\})\leq 2E+C$.

    \item If $U_{i-1}\propnest V$, then $\dist_V(\rho^{U_{i-1}}_V,x_i)\leq C$, and hence consistency and bounded geodesic image, and Claim \ref{claim:odd-relevant}, imply $\dist_V(x_i,\gamma_V)\leq C+2E$.  Similarly, if $U_{i+1}\propnest V$, then $\dist_V(x_{i+1},\gamma)\leq C+2E$. 

    \item If $U_{i-1}\not\perp V$ and $U_{i+1}\orth V$, then $\dist_V(x_{i+1},x_i)\leq C$ since $x_{i+1}=\gate_{i+1}(x_i)$.  Hence $\dist_V(\gamma_V,x_{i+1})\leq 20(E+C)$.

    \item If $U_{i+1}\not\perp V$ and $U_{i-1}\orth V$, then from above we have $\dist_V(\gamma_V,x_{i+1})\leq 10(E+C)$.  Also, by construction, $\dist_V(x_i,x_j)\leq C$ for all $j_0\leq j<i$, where either $j_0=0$ or $j_0$ is the minimal odd integer with $U_{j_0}\orth V$.  Hence either $\dist_V(x,x_i)\leq C$, or $U_{j_0-2}$ exists and is not orthogonal to $V$, so $\dist_V(x_{j_0-1},\gamma_V)\leq C+2E$, so $\dist_V(x_i,\gamma_V)\leq 3C+2E$.
\end{itemize}
Hence, as long as $T\geq 80(E+C)+4\theta+1$, Lemma \ref{lem:initial-M} implies $\relevant_{2T}(x_i,x_{i+1})\subseteq \relevant_T(x,y)$.  In particular,  our choice of $K$ is large enough that we can conclude that $\relevant_{2K+1}(x_i,x_{i+1})\subseteq \relevant_{K}(x,y)$, so $\relevant_{2K+1}(x_i,x_{i+1})\cap\mathfrak S_{ql}\subseteq \{U_1,\ldots,U_{2m-1}\}$.  But for all odd $j\leq 2m-1$, we have $\dist_{U_j}(x_i,x_{i+1})\leq 10(C+E)$ by Claim \ref{claim:odd-relevant}, so $\relevant_{2K+1}(x_i,x_{i+1})\cap\mathfrak S_{ql}=\emptyset$ because of our choice of $K$.

Finally, we have shown that for each $V\in\mathfrak S$, each $\dist_V(x_i,\gamma_V)\leq 20(E+C)$.  Let $p_i^V$ be a closest point in $\gamma_V$ to $x_i$.  For even $i$, let $\gamma^i_V$ be the sub-geodesic joining $p_i^V,p^{i+1}_V$.  Examining the above possibilities shows that for all even $i$ with $\dist_V(x_i,x_{i_1})\geq 2T$, the geodesic $\gamma^i_V$ has a subpath $\alpha^i_V$ with $|\alpha^i_V|\geq |\dist_V(x_i,x_{i+1})|/2$ and all the $\alpha^i_V$ are disjoint, which proves the claim.
\end{proofofclaim}

Claim \ref{claim:even-i-sum}, Claim \ref{claim:large-link-application}, and the distance formula give:
\begin{eqnarray*}
    \sum_{i\ \text{even}}\dist(x_i,x_{i+1})&\leq & \DF(2K+1)\sum_{i\ \text{even}}\left[\sum_{V\in \mathfrak S-\mathfrak S_{ql}}\ignore{\dist_V(x_i,x_{i+1})}{2K+1}+\DF(2K+1)\right]\\
    &\leq&N\DF(2K+1)^2\epsilon\cdot \dist(x,y)+2\DF(2K+1)\sum_{V\in\mathfrak S-\mathfrak S_{ql}}\ignore{\dist_V(x,y)}{K}\\
    &\leq&N\DF(2K+1)^2\epsilon\cdot \dist(x,y)+4\DF(2K+1)\DF(K)\cdot\hat\dist(x,y)\\
    &\leq&\DF(2K+1)(N\DF(2K+1)+4\DF(K))\cdot\epsilon\cdot\dist(x,y),
\end{eqnarray*}
where we used $\hat\dist(x,y)\geq 1/\epsilon>\DF(K)$ by our choice of $\epsilon_0$.

Next, using Claim \ref{claim:odd-x_i-at-start} and the distance formula, we get:
\begin{eqnarray*}
    \sum_{i\ \text{odd}}\dist(x_i,x_{i+1})&\leq&2\DF(2C+1)\sum_{i\ \text{odd}}\dist_{U_i}(x_i,x_{i+1})\\
    &\leq& 2\DF(2C+1)\sum_{i\ \text{odd}}\left[\dist_{U_i}(x,y)+4C+2E\right]\\
    &\leq& 2\DF(2C+1)\sum_{U\in\relevant_{s_\lambda}(x,y)}\left[\dist_U(x,y)+4C+2E\right]\\
    &\leq&2\DF(2C+1)\DF(s_\lambda,\lambda)\cdot \dist(x,y).
\end{eqnarray*}
where the affine map $\lambda$ is $t\mapsto t+4C+2E$.

\textbf{Conclusion.}  Item \eqref{item:odd-one-relevant} holds by Claim \ref{claim:odd-relevant}.  Next, we choose $M\geq K$ as follows.  First, as long as $M\geq 2\DF(2C+1)\DF(\tau,\lambda)$, the preceding computations give item \eqref{item:odd-sum-linear}.  Second, by taking $M\geq \DF(2K+1)(N\DF(2K+1)+4\DF(K))$, we get item \eqref{item:even-sum-small}.  Finally, as long as $M\geq N$, the uniform constant from Claim \ref{claim:large-link-application}, we get $n\leq M\epsilon\cdot\dist(x,y)$, as required.
\end{proof}

Now we count the points $y$ in a given ball $B^X_r(x)$ whose projections differ significantly from those of $x$ in only one coordinate space $\mathcal CU$, with $U\in\mathfrak S_{ql}$. 

\begin{defn}\label{defn:A-xr}
Let $M\geq 1,r>1$, and let $x\in X$.  Let $A^M_{x,r}$ be the set of $y\in X$ such that:
\begin{itemize}
    \item $\dist(x,y)\leq r$,
    \item $\relevant_M(x,y)\subseteq \mathfrak S_{ql}$,
    \item $|\relevant_M(x,y)|\leq 1$.
\end{itemize}
\end{defn}

The next lemma bounds the size of $A^M_{x,r}$ by a linear function of $r$.  To do this, we divide $A^M_{x,r}$ into two disjoint subsets $\mathcal A,\mathcal B$, with $\mathcal A$ consisting of those $y\in A^M_{x,r}$ that are $2s$--close to $x$, for some suitably chosen $s$, depending only on $M$ and the HHS parameters.  The cardinality of $\mathcal A$ is therefore bounded independently of $r$, so we can ignore $\mathcal A$.  In the first two claims in the proof, we show that any $y\in\mathcal B$ must have the property that $\relevant_M(x,y)\subseteq\{U\}$, where $U$ is one of a uniformly bounded set (independent of $r$) of elements of $\mathfrak S_{ql}$, whose standard product regions are in the immediate vicinity of $x$.  We then argue, using that $\mathcal CU$ is a quasiline, that there are only linearly many possibilities for $y$.

\begin{lem}\label{lem:linear}
    For all $M\geq 1$ there exists $\lambda\geq 1$ such that $$|A^M_{x,r}|\leq \lambda r$$ for all $x\in X$ and $r>1$.
\end{lem}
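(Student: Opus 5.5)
We follow the outline given just before the statement. Fix $M\geq 1$ and choose $s\geq \max\{s_{Id},M\}$ large in terms of $M$ and the HHS parameters. Split $A^M_{x,r}=\mathcal A\sqcup\mathcal B$, where $\mathcal A$ consists of those $y$ with $\dist(x,y)\leq 2s'$ for a suitable constant $s'$ (depending only on $M$ and the HHS constants), and $\mathcal B$ is the rest. By bounded geometry (Standing Assumption \ref{assume:X}), $|\mathcal A|\leq\growth(2s')$, which is independent of $r$. It remains to bound $|\mathcal B|$ linearly in $r$.

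\emph{Step 1: each $y\in\mathcal B$ has exactly one relevant domain.} If $\relevant_M(x,y)=\emptyset$, the distance formula with threshold $M$ (after enlarging $M$ to at least $s_{Id}$, which only enlarges $A^M_{x,r}$ up to replacing the threshold) gives $\dist(x,y)\leq \DF(M)^2$. Choosing $s'\geq\DF(M)^2$ rules this out. So $\relevant_M(x,y)=\{U_y\}$ with $U_y\in\mathfrak S_{ql}$, and the distance formula gives $\dist(x,y)\asymp \dist_{U_y}(x,y)$ up to constants depending on $M$.

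\emph{Step 2: $U_y$ ranges over a uniformly bounded set $\mathcal U_x\subseteq\mathfrak S_{ql}$.} Since all projections other than to $U_y$ move by at most $M$, in particular $\dist_V(x,y)\leq M$ for every $V$ with $U_y\propnest V$ or $U_y\transverse V$. We want to show that $x$ lies within a uniform distance $R=R(M)$ of $P_{U_y}$. Take $z=\gate_{P_{U_y}}(x)$. For $W\nest U_y$ or $W\orth U_y$ we have $\dist_W(x,z)$ uniformly bounded. For $V$ with $U_y\propnest V$ or $U_y\transverse V$, the bound $\dist_{U_y}(x,y)$ large (at least $M$, and in fact large since $\dist(x,y)>2s'$) together with consistency and bounded geodesic image forces $\pi_V(x)$ and $\pi_V(y)$ to be near $\rho^{U_y}_V$, unless $\dist_V(x,y)$ is large, which is excluded. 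Hence $\dist_V(x,\rho^{U_y}_V)$ is bounded, so $\dist(x,z)\leq R$ by the distance formula. Then the domain $U_y$ is determined by a point of $P_{U_y}$ in $B_R(x)$ together with the fact that $\mathcal CU_y$ is unbounded. The key finiteness input is that only boundedly many $U\in\mathfrak S_{ql}$ have $\dist(x,P_U)\leq R$, depending only on $R$. I would derive this from bounded geometry and the distance formula: for each such $U$, coarse surjectivity of $\pi_U$ on $F_U$ gives a point $w_U\in B_{R'}(x)$ with $\dist_U(x,w_U)$ equal to a fixed large constant $D$ and all other projections close. Distinct $U$ then give points $w_U$ that are pairwise separated in the corresponding $\mathcal CU$. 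This yields an injection-up-to-bounded-multiplicity into $B_{R'}(x)$, whose size is bounded by $\growth(R')$. I expect this to be the \textbf{main obstacle}: making the map $U\mapsto w_U$ boundedly-finite-to-one needs care, since $w_U=w_{U'}$ is possible only if both projections are large, and this bounds the multiplicity via the standard fact that $\relevant_D(x,w)$ has size bounded by $\dist(x,w)$.

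\emph{Step 3: linear count for fixed $U\in\mathcal U_x$.} Fix $U$ and consider the $y\in\mathcal B$ with $U_y=U$. Two such $y,y'$ have $\dist_V(y,y')\leq 2M$ for all $V\neq U$. So if additionally $\dist_U(y,y')\leq c$, then the distance formula with threshold $\max\{2M,c\}+1$ gives $\dist(y,y')\leq c'$. Since $\mathcal CU$ is an $(E,E)$--quasiline and $\dist_U(x,y)\leq \DF\cdot r+\DF$, the projections $\pi_U(y)$ lie in a subset of $\mathcal CU$ covered by $O(r)$ balls of radius $c$. The $y$ with $\pi_U(y)$ in a given ball lie in a single $c'$--ball of $X$, hence number at most $\growth(c')$. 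So there are at most $\lambda_0 r$ such $y$. Summing over $|\mathcal U_x|$ domains and adding $|\mathcal A|$ gives $|A^M_{x,r}|\leq\lambda r$, using $r>1$ to absorb the constants.
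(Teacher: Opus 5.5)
Your proposal is correct. Steps 1 and 3 are essentially the paper's argument. Step 1 is the paper's use of the uniqueness axiom. Step 3 is a more explicit version of the paper's Claim 3: you bound the fibres of $\pi_U$ on $\mathcal B_U$ directly via the distance formula, instead of appealing to $F_U^{\bar x}$ being a bounded-geometry quasiline. The quasi-ray case from Definition \ref{defn:standard-HHS} goes through identically.

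The genuine difference is in Step 2.

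\textbf{The paper's route.} The paper never locates $P_U$ relative to $x$. It defines $\mathcal Q_s$, the set of $U\in\mathfrak S_{ql}$ witnessed by some $y$ with $\dist(x,y)\in[s,2s]$ and $\relevant_{2M}(x,y)=\{U\}$. It bounds $|\mathcal Q_s|\leq\growth(2s)$ using the injection $U\mapsto y$. It then shows that any $U$ arising from a far-away $y$ lies in $\mathcal Q_s$. To do this it truncates a hierarchy path from $x$ to $y$ at distance about $3s/2$, using Lemma \ref{lem:initial-M} to see that no new $2M$-relevant domains appear along the path.

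\textbf{Your route.} You first prove, via consistency and bounded geodesic image, that $x$ is uniformly close to $P_{U_y}$. You then build a witness $w_U$ inside $F_U$ near the gate of $x$.

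\textbf{Comparison.} Your approach makes rigorous the heuristic stated before the lemma, that the relevant $U$ have product regions in the immediate vicinity of $x$. The cost is a transverse/nested case analysis and the use of gates to $P_U$. The paper's approach needs only hierarchy paths and Lemma \ref{lem:initial-M}.

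\textbf{The flagged obstacle.} The obstacle you flag is not really there. By construction, $w_U$ differs from $x$ by at most a constant $c(R)$ in every domain other than $U$, and by about $D$ in $U$. Choosing $D$ larger than $2c(R)$ plus a uniform constant makes $U\mapsto w_U$ injective outright, exactly as in the paper's Claim 1. So no multiplicity bound is needed, although your fallback via the size of $\relevant_D(x,w)$ also works.
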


\begin{proof}
We may assume that $M\geq M_1$, where $M_1$ is as in Lemma \ref{lem:initial-M}, so $\relevant_{2M}(x,y')\subseteq \relevant_M(x,y)$ for all $x,y\in X$ and $y'\in\hull(x,y)$.  We may also assume that $2M\geq s_{Id}$.

Now fix $x\in X$.  Given $s\geq 1$, let $\mathcal Q_s$ be the set of all $U\in\mathfrak S_{ql}$ such that there exists $y\in X$ for which $\dist(x,y)\in [s,2s]$ and $\relevant_{2M}(x,y)=\{U\}$.  

\setcounter{claim}{0}
\begin{claim}\label{claim:bounded-Q}
For any sufficiently large $s$, there exists $N_1\in\naturals$ such that $|\mathcal Q_s|\leq N_1$.
\end{claim}

\begin{proofofclaim}{\ref{claim:bounded-Q}}
Let $s\geq \DF(2M)(4M+1)+\DF(2M)$. Recall that $|\neb_{2s}(x)|\leq \growth(2s)$, which is independent of $x$.  Let $U\in\mathcal Q_s$.  Choose $y\in X$ such that $\relevant_{2M}(x,y)=\{U\}$, which exists by the definition of $\mathcal Q_s$.  Then the distance formula implies that $\dist(x,y)\asymp_{\DF(2M)}\dist_U(x,y)$.  Moreover, the definition of $\mathcal Q_s$ allows us to choose this $y$ with $\dist(x,y)\in[s,2s]$.  Hence $\dist_U(x,y)\geq (s-\DF(2M))/\DF(2M)\geq 4M+1$.

Suppose that $U'\in\mathcal Q_s-\{U\}$ and let $y'\in X$ be such that $\relevant_{2M}(x,y')=\{U'\}$ and $\dist(x,y')\in[s,2s]$. Then $\dist_U(y,y')\geq \dist_U(y,x)-\dist_U(x,y')\geq 2M$, and similarly $\dist_{U'}(y,y')\geq 2M$, so $y\neq y'$. Thus the assignments $U\mapsto y$ define an injective map $\mathcal Q_s\to \neb_{2s}(x)$ and this implies $|\mathcal Q_s|\leq \growth(2s)$.
\end{proofofclaim}

\begin{claim}\label{claim:U-in-Q-s}
For all sufficiently large $s$, the following holds.  Let $s'>2s$ and $U\in\mathfrak S_{ql}$.  Suppose that there exists $y\in X$ such that $\relevant_M(x,y)=\{U\}$ and $\dist(x,y)\geq s'$.  Then $U\in\mathcal Q_s$.
\end{claim}

\begin{proofofclaim}{\ref{claim:U-in-Q-s}}
Let $U\in\mathfrak S_{ql}$ and suppose that there exists $y$ as in the statement.  Let $\gamma:[0,K]\to X$ be a $\kappa$--hierarchy path from $x$ to $y$, where $\kappa$ depends only on the HHS parameters and $\image(\gamma)\subseteq \hull(x,y)$.  Our choice of $M$ ensures that $\relevant_{2M}(x,\gamma(t))\subseteq \relevant_M(x,y)=\{U\}$.  Choose $t'\in[0,K]$ such that $\dist(x,\gamma(t'))\in[3s/2-2\kappa,3s/2+2\kappa]$, which is possible since $\dist(x,y)\geq s'>2s$, and let $y'=\gamma(t')$.  Then, provided $s$ was chosen sufficiently large in terms of $\kappa$ and the distance formula constants (with threshold $2M$), we have $\dist_U(x,y')\geq 2M$ and $\dist(x,y')\in[s,2s]$, so $U\in\mathcal Q_s$, as required.
\end{proofofclaim}

By the preceding two claims, there exist $s\geq 1$ and $N_1\in\naturals$, depending only on the HHS parameters and $M$, such that the following holds.  Let $\mathcal Q$ be the set of $U\in\mathfrak S_{ql}$ such that there exists $y\in X$ with $\dist(x,y)\geq 2s$ and $\relevant_M(x,y)=\{U\}$.  Then $|\mathcal Q|\leq N_1$.  

Moreover, if $y\in X$ satisfies $\relevant_M(x,y)=\emptyset$, then $\dist(x,y)\leq \theta_u(M)$, where $\theta_u$ is the function from the uniqueness axiom, \cite[Defn. 1.1.(9)]{HHS_II}, and we can and shall assume that $s\geq \theta_u(M)$.

To conclude, let $r>0$ be given.  Write
$$A^M_{x,r}=\mathcal A\sqcup\mathcal B,$$
where $\mathcal A=A^M_{x,r}\cap \neb_{2s}(x)$, and $\mathcal B$ therefore consists of all $y\in X$ such that $\relevant_M(x,y)$ has exactly one element, and that element is in $\mathfrak S_{ql}$, and $\dist(x,y)\in[2s+1,r]$.

Observe that $|\mathcal A|\leq \growth(2s)$, which is bounded independently of $r$.  Next, note that if $y\in\mathcal B$, then $\relevant_M(x,y)=\{U\}$ for some $U\in\mathcal Q$, so there are at most $N_1$ possibilities for $U$. Fix $U\in\mathcal Q$ and let $\mathcal B_U=\big\{y\in X:\dist(x,y)\leq r,\ \relevant_M(x,y)=\{U\}\big\}$.  

\begin{claim}\label{claim:linearly-many-per-line}
There exists $\lambda_0$, depending only on the HHS parameters, such that $|\mathcal B_U|\leq \lambda_0r$ for all $U\in\mathcal Q$.
\end{claim}

\begin{proofofclaim}{\ref{claim:linearly-many-per-line}}
Since $(X,\mathfrak S)$ is standard and $U$ is $\nest$--minimal, letting $\bar x$ be the gate of $x$ in $P_U$, the subspace $F_U^{\bar x}$ is a bounded-geometry quasiline, and $\mathcal B_U$ lies in a uniformly bounded neighbourhood of a ball in $F_U^{\bar x}$ of radius bounded uniformly linearly in terms of $r$.  
\end{proofofclaim}

Combining Claim \ref{claim:linearly-many-per-line} with our estimate for $|\mathcal A|$, we get $|A^M_{x,r}|\leq \growth(2s)+N_1\lambda_0r$, and since $r>1$ and $s$ is independent of $r$, we can take $\lambda =\growth(2s)+N_1\lambda_0$ to conclude.
\end{proof}

Now we can prove the main proposition of this section.

\begin{proof}[Proof of Proposition \ref{prop:counting}]
Let $a>1$ be given.  Let $b,c\in (1,a)$ be quantitities to be chosen. Let $M\geq 1,\epsilon_0>0$ be the constants provided by Lemma \ref{lem:sequence}.  For the given $b$, Lemma \ref{lem:easy-count} provides a constant $\epsilon_2>0$ and a constant $r_0>0$ such that 
$$\left|\{v\in T^k_3:d(o,v)\leq r,\ \dist(f(o),f(v))\leq \epsilon_2r\}\right|\leq b^r$$
for all $r>r_0$.

For the given $c>1$, Lemma \ref{lem:partition-counting} provides a constant $\epsilon\in(0,\epsilon_0)$ such that $P(s,M\epsilon s)\leq c^s$ for all $s>0$.  Finally, set $$\epsilon_1=\epsilon\cdot\epsilon_2.$$ We can and shall assume that all $\epsilon_i,\ i\in\{0,1,2\}$, and $\epsilon$, are in $(0,1)$. Let $r>r_0$ and let 
$$B_{o,r}=\{v\in T_3^k: d(o,v)\leq r,\  \hat\dist(\hat f(o),\hat f(v))<\epsilon_1 r,\  \dist(f(o), f(v))\geq \epsilon_2 r\},$$
and let 
$$B'_{o,r}=\{v\in T_3^k: d(o,v)\leq r,\  \hat\dist(\hat f(o),\hat f(v))<\epsilon_1 r,\  \dist(f(o), f(v))<\epsilon_2 r\},$$
so that our ultimate goal is to show that $|B_{o,r}\sqcup B'_{o,r}|\leq a^r$ for sufficiently large $r$, provided $\epsilon$, and hence $\epsilon_1=\epsilon\cdot\epsilon_2$, is sufficiently small. First, $|B'_{o,r}|\leq b^r$ for all $r>r_0$, by Lemma \ref{lem:easy-count} and our choice of $\epsilon_2,r_0$.

\textbf{Bound on $|f(B_{o,r})|$.} Set $x=f(o)$.  Note that if $p\in B_{o,r}$ and $f(p)=:y$, then $\epsilon_2r\leq \dist(x,y)\leq Lr$, and $\hat\dist(x,y)<\epsilon_1 r=\epsilon\cdot\epsilon_2r$, so Lemma \ref{lem:sequence} applies to $x,y$.  The lemma therefore provides $n\leq M\epsilon\dist(x,y)\leq M\epsilon Lr$ and a sequence of points $x=x_0,x_1,\cdots,x_n=y$ in $X$ with the properties enumerated in the lemma.  Let $s=\sum_{i=0}^{n-1}\dist(x_i,x_{i+1})$, so that $\dist(x,y)\leq s\leq M(1+\epsilon)Lr$.  Then the quantities $\dist(x_i,x_{i+1})$ partition $s$ into $n$ parts.

Now fix $i$ and suppose that the point $x_i$ is given.  Let $\eta_i$ be the number of possible choices for $x_{i+1}$, given $x_i$, that satisfy the conclusions of Lemma \ref{lem:sequence}. Then:
\begin{itemize}
    \item if $i$ is even, then $\eta_i\leq \growth(\dist(x_i,x_{i+1}))\leq a_0^{\dist(x_i,x_{i+1})}$, and
    \item if $i$ is odd, then $\eta_i\leq\lambda \dist(x_i,x_{i+1})$, where $\lambda$ is the constant from Lemma \ref{lem:linear} (here we used Lemma \ref{lem:sequence}.\eqref{item:odd-one-relevant}).
\end{itemize}
Note that $n\leq M\epsilon \dist(x,y)\leq M\epsilon s$. Hence, using Lemma \ref{lem:sequence} and the arithmetic/geometric mean inequality, 
\begin{eqnarray*}
\prod_{i=0}^{n-1}\eta_i&\leq&a_0^{\sum_{i\ \text{even}}\dist(x_i,x_{i+1})}\cdot\prod_{i\ \text{odd}}\eta_i\\
&\leq&a_0^{M\epsilon Lr}\cdot \lambda^{n/2}\left(\frac{2MLr}{n}\right)^n\leq a_0^{M\epsilon Lr}\cdot\lambda^{M\epsilon s}\left(\frac{2}{\epsilon}\right)^{\epsilon MLr}\cdot e^{\epsilon MLr/e}.
\end{eqnarray*}
Indeed, $$\left(\frac{2MLr}{n}\right)^n\leq \left(\frac{2}{\epsilon}\right)^{\epsilon MLr}\cdot \max_{z\geq 1}\left(\frac{\epsilon MLr}{z}\right)^z.$$
Hence, letting $a_1=a_0e^{1/e}$, the total number of possibilities for $y$ is bounded above by 
$$a_1^{M\epsilon Lr}\cdot\left(\frac{2}{\epsilon}\right)^{\epsilon MLr}\cdot\sum_{s\leq M(1+\epsilon)Lr}P(s,M\epsilon s)\lambda^{M\epsilon s}\leq \left[\left(\frac{2a_1}{\epsilon}\right)^{ML\epsilon}\right]^r\cdot\sum_{s\leq M(1+\epsilon)Lr}(c\lambda^{M\epsilon})^s.$$
Hence, for any $b_1>1$, by choosing $c$ and $\epsilon$ sufficiently small, we can ensure that there exists $r_1$ such that $|f(B_{o,r})|\leq b_1^r$ for all $r>r_1$.   

Recall that there is a constant $N_0$ such that $|f^{-1}(x)|\leq N_0$ for all $x\in X$.  So, for $r>r_1$, we have $|B_{o,r}|\leq N_0b_1^r$.  Hence $|B_{o,r}\cup B'_{o,r}|\leq b^r+N_0b_1^r$ for $r>\max\{r_0,r_1\}$, and since this can be arranged for arbitrary $b,b_1$ by making $\epsilon$ sufficiently small and $r_0,r_1$ sufficiently large, we get $|B_{o,r}\cup B'_{o,r}|\leq a^r$, as required.
\end{proof}

\section{Proof of Theorem \ref{thm:main}}\label{sec:induction}
The goal of this section is to prove Theorem \ref{thm:main}, which we now recall.

\main*

\subsection{Arcs in fibres}\label{subsec:gmt-facts}
We will need some general facts, assembled from the results in \cite{Simon:GMT} and \cite{EilenbergHarrold:continua}. Throughout, if $f:\reals^n\to\reals^m$ is a lipschitz map, given by $f((x_i)_{i=1}^n)=(f_j((x_i)_{i=1}^n))_{j=1}^m$, then Rademacher's theorem (see e.g. \cite[Thm. 2.1.4]{Simon:GMT}) implies that the gradients $\nabla f_j$ are defined at almost every point in $\reals^n$, so we denote by $\jacobian f$ the Jacobian of $f$, i.e. the matrix with $(i,j)$ entry $\frac{\partial f_j}{\partial x_i}$.  We also let $\leb^n$ denote the Lebesgue measure on $\reals^n$.  We denote by $\haus^n$ the $n$--dimensional Hausdorff measure on subsets of whichever metric space we are working in.  We refer to embedded (rectifiable) paths in metric spaces as \emph{(rectifiable) arcs}. 

The next several lemmas are to support Corollary \ref{cor:arc}, which is one of the main ingredients in the proof of the theorem.  The input to the corollary is a lipschitz map $f:\reals^k\times [-1,1]\to \reals^k$ with suitable properties, and the output is a rectifiable arc, in the preimage of a point, that joins $\reals^k\times \{\pm 1\}$ and on which the Jacobian of $f$ is almost always of full rank.

\begin{lem}
\label{lem:geometric-analysis-magic}
    Let $f:\mathbb R^k\times [-1,1]\to \mathbb R^k$ be a proper lipschitz function. Then for $\haus^k$-almost every point $x$ of $\mathbb R^k$, the set $f^{-1}(x)$ has finite $\haus^1$-measure, and at
    $\haus^1$-almost-every point of $f^{-1}(x)$, the matrix $\jacobian f$ has rank $k$.
\end{lem}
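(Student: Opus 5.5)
The plan is to combine the coarea formula with Eilenberg's inequality. Write $\Omega=\reals^k\times[-1,1]\subset\reals^{k+1}$, and for almost every $z\in\Omega$ let $J_kf(z)=\sqrt{\det(\jacobian f(z)^T\jacobian f(z))}$ be the $k$--dimensional Jacobian; $J_kf(z)>0$ exactly when $\jacobian f(z)$ has rank $k$. The statement then splits into two independent claims: finiteness of $\haus^1(f^{-1}(x))$ for almost every $x$, and $\haus^1$--negligibility of the degenerate set $Z=\{z: J_kf(z)=0\}\cup N$ inside almost every fibre. Here $N$ is the Lebesgue--null set where $f$ is not differentiable.

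For finiteness, I would use properness together with Eilenberg's inequality (see \cite{Simon:GMT}, or \cite{EilenbergHarrold:continua}). For any compact $B\subset\reals^k$, the preimage $f^{-1}(B)$ is compact, hence has finite $\leb^{k+1}$--measure. Eilenberg's inequality for a $\mathrm{Lip}(f)$--lipschitz map gives
$$\int^*_{B}\haus^1(f^{-1}(x)\cap f^{-1}(B))\,d\haus^k(x)\leq c_k\,\mathrm{Lip}(f)^k\,\haus^{k+1}(f^{-1}(B))<\infty.$$
So $\haus^1(f^{-1}(x))<\infty$ for $\haus^k$--a.e. $x\in B$. Exhausting $\reals^k$ by countably many balls gives the first assertion.

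For the rank statement, I would apply the coarea formula for lipschitz maps $\reals^{k+1}\supset\Omega\to\reals^k$ (\cite[§3.2]{Simon:GMT}) to the indicator of $Z\cap f^{-1}(B)$:
$$\int_{Z\cap f^{-1}(B)}J_kf\,d\leb^{k+1}=\int_{B}\haus^1(Z\cap f^{-1}(x))\,d\haus^k(x).$$
On $Z\setminus N$ the integrand on the left vanishes, and $N$ is $\leb^{k+1}$--null, so the left side is $0$. Hence $\haus^1(Z\cap f^{-1}(x))=0$ for a.e. $x$. The non-differentiability set $N$ needs care, since coarea is applied to an $\leb^{k+1}$--null set. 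The right formulation is the coarea formula with an arbitrary measurable $A$, where the left integrand is defined a.e.: for $\leb^{k+1}$--null $A$ one gets $\haus^1(A\cap f^{-1}(x))=0$ for a.e. $x$. This also follows directly from Eilenberg's inequality applied to $A$. Measurability of $Z$ can be arranged by replacing it with a Borel superset differing by a null set.

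The boundary of $\Omega$ is not an issue. One option is to extend $f$ to a lipschitz map on $\reals^{k+1}$ (by McShane componentwise) and work on $\Omega$. The other is to note that $\reals^k\times\{\pm1\}$ is $\leb^{k+1}$--null, so it is handled by the same null-set argument.

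The main obstacle is making sure the version of the coarea formula being invoked is the one valid for merely lipschitz maps with $n>m$, and that null sets in the domain give null sets in almost every fibre. I would cite \cite[Thm. 3.2.22]{Simon:GMT}-style statements, or Federer's coarea, for this. Intersecting the two full-measure sets of $x$ then gives the lemma.
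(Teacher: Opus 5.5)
Your proposal is correct and follows essentially the paper's route. The coarea formula applied to the degenerate set (the paper's $C\cap A$, where $\Delta=0$) shows that this set is $\haus^1$--null in almost every fibre, and finiteness of $\haus^1(f^{-1}(x))$ comes from bounding fibre measures over a compact piece and using properness; your Eilenberg inequality on $f^{-1}(B)$ is just the inequality form of the paper's coarea computation on a ball. Your explicit handling of the non-differentiability set and of $\reals^k\times\{\pm 1\}$ as null sets fills in a point the paper leaves implicit, since it only considers points where $\jacobian{g}$ is defined.
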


\begin{proof}
Let $A=\reals^k\times[-1,1]\subseteq \reals^{k+1}$, and let $g:\reals^{k+1}\to \reals^k$ be a lipschitz map with $g|_A=f$.  Let $C=\{x\in\reals^{k+1}:\rank(\jacobian{g}(x))<k\}$.  For each $x\in\reals^{k+1}$, let $\Delta(x)=\sqrt{\det\left(\jacobian{g}(x)\jacobian{g}(x)^t\right)}$.  Then the co-area formula (\cite[Thm. 2.7.3]{Simon:GMT}) says that
$$\int_{\reals^k}\haus^1(B \cap f^{-1}(y))\ d\leb^k(y)=\int_{B}\Delta(x)\ d\haus^{k+1}(x)$$
for any $\haus^{k+1}$--measurable $B\subseteq A$.

Considering the case where $B\subset A$ is an open ball in $\reals^{k+1}$ shows that $\haus^1(B\cap f^{-1}(y))<\infty$ for $\leb^k$--a.e. $y\in\reals^k$, and hence for $\haus^k$--almost all $y$.  Since $f$ is proper, it follows that $f^{-1}(y)$ has finite $\haus^1$--measure for almost all $y$.

Next, consider the case where $B=C\cap A$.  Now, $x\in B$ if and only if $x\in A$ and $\rank(J)<k$, where $J=\jacobian{f}(x)$.  Since $\rank(J)=\rank(JJ^t)$, we have $x\in B$ if and only if $\Delta(x)=0$.  Hence, for $\leb^k$--almost every $y$, we have $\haus^1(C\cap f^{-1}(y))=0$, which completes the proof.
\end{proof}

\begin{lem}
\label{lem:path-top-to-bottom}
    Let $h:\mathbb R^k\times [-1,1]\to\mathbb R^k$ be a proper lipschitz map such that $h|_{\mathbb R^k\times \{\delta\}}$ is bilipschitz for some $\delta\in[-1,1]$. Then for all $p\in\mathbb R^k$ there exists a connected subset $\alpha'$ of $h^{-1}(p)$ that intersects both $\mathbb R^k\times \{-1\}$ and $\mathbb R^k\times \{1\}$.
\end{lem}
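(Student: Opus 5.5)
We prove the lemma by a degree argument combined with a standard fact from continuum theory. Fix $p\in\reals^k$. The fibre $K=h^{-1}(p)$ is compact, since $h$ is proper. Write $K_\pm=K\cap(\reals^k\times\{\pm1\})$. Both are closed and disjoint.

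Suppose, for contradiction, that no connected subset of $K$ meets both $K_-$ and $K_+$. By the standard separation lemma for compacta (a consequence of the fact that components and quasi-components coincide in compact Hausdorff spaces; this is in \cite{EilenbergHarrold:continua}), $K$ then splits as a disjoint union $K=K^-\sqcup K^+$ of closed sets with $K_-\subseteq K^-$ and $K_+\subseteq K^+$. If $K_-=\emptyset$, take $K^-=\emptyset$, and similarly for $K_+$. Since $K^\pm$ are compact and disjoint, we can choose a continuous function $\phi:\reals^k\times[-1,1]\to[-1,1]$ with $\phi\equiv-1$ on a neighbourhood of $K^-\cup(\reals^k\times\{-1\})$ near $K$ and $\phi\equiv 1$ near $K^+$. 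More concretely, we choose an open set $W\supseteq K^+$ avoiding $K^-$ and $\reals^k\times\{-1\}$, and define a path of levels $t\mapsto$ a surface that passes above $K^-$ and below $K^+$. The cleanest form of this is to build a continuous section $\sigma:\reals^k\to\reals^k\times[-1,1]$, $\sigma(z)=(z,s(z))$, whose image $S$ is disjoint from $K$. To do this, take $s=\delta$ far away, and near the compact set $\mathrm{pr}(K)$ choose $s$ to lie strictly between the two pieces. Such a separating graph exists because $K^-$ and $K^+$ are disjoint compacta in the slab and $K^-$ contains the bottom part while $K^+$ contains the top part. One can, for instance, take $s(z)=\sup\{t: (z,t)\in \overline{N}\}$ for a small closed neighbourhood $N$ of $K^-\cup(\reals^k\times\{-1\})$ missing $K^+$, and then smooth it slightly to make $s$ continuous.

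Next we compare degrees. The map $h_\delta=h|_{\reals^k\times\{\delta\}}$ is a bilipschitz homeomorphism $\reals^k\to\reals^k$, so it is proper of local degree $\pm1$ at $p$. The straight-line homotopy $H_u(z)=h(z,(1-u)\delta+u\,s(z))$, for $u\in[0,1]$, joins $h_\delta$ to $h\circ\sigma$. Properness of $h$ makes this a proper homotopy. We arrange $s=\delta$ outside a large ball, which is possible because $K$ is compact. With that choice the homotopy is constant outside a compact set. Hence the degree of $h\circ\sigma$ at $p$ equals $\pm1$, computed via compactly supported cohomology or by passing to the one-point compactification, which gives maps $S^k\to S^k$. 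A map of nonzero degree over $p$ must hit $p$, so $h(\sigma(z))=p$ for some $z$. This contradicts $S\cap K=\emptyset$.

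\textbf{Subtlety.} During the homotopy, the intermediate graphs can hit $K$; this is harmless, because degree is invariant under proper homotopy regardless of preimages. What does need checking is that $p$ is not hit at infinity, and properness of $h$ guarantees this.

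\textbf{Main obstacle.} The main obstacle is the construction of the continuous separating graph $s$ lying strictly between $K^-$ and $K^+$ and avoiding $K$ entirely. The approach we would try first is the following. Pick $\eta>0$ smaller than the distance between $K^-\cup(\reals^k\times\{-1\})$ and $K^+$ (locally, near $\mathrm{pr}(K)$). Let $N$ be the closed $\eta/2$-neighbourhood of $K^-\cup(\reals^k\times\{-1\})$ intersected with the slab, and define $s(z)=\max\{t:(z,t)\in N\}+\eta/4$. This function is upper semicontinuous. We would replace it by the continuous function $z\mapsto\sup_w\bigl(\max\{t:(w,t)\in N\}-|z-w|\bigr)+\eta/4$, whose inf-convolution-style regularisation keeps the graph within $\eta$ of $N$. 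We would then interpolate to $\delta$ far away, and in the degenerate cases where $K^+$ or $K^-$ is empty we use the constant graph at $\pm1$.
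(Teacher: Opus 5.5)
Your overall strategy is the paper's: split the fibre into a closed piece containing its bottom points and a closed piece containing its top points, find a hypersurface avoiding the fibre, and contradict a degree computation. Your separation step, your degenerate cases, and your degree argument for maps $z\mapsto h(z,s(z))$ with $s\equiv\delta$ off a compact set are all fine.

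The gap is exactly the step you flag as the main obstacle. A continuous $s$ whose graph avoids $K$ need not exist, given only what you use about $K$: a decomposition $K=K^-\sqcup K^+$ into disjoint compacta, with $K^-$ containing the bottom points and $K^+$ the top ones. For $k=1$ take
\[
K^-=\bigl(\{0\}\times[-1,\tfrac12]\bigr)\cup\bigl([0,3]\times\{\tfrac12\}\bigr),\qquad K^+=\bigl(\{4\}\times[-\tfrac12,1]\bigr)\cup\bigl([1,4]\times\{-\tfrac12\}\bigr).
\]
Suppose the graph of $s$ avoids $K^-\cup K^+$. Then $s(0)>\tfrac12$, since $\{0\}\times[-1,\tfrac12]\subseteq K^-$. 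Also $s\neq\tfrac12$ on $[0,3]$, so $s>\tfrac12$ on $[0,3]$. Likewise $s(4)<-\tfrac12$ and $s\neq-\tfrac12$ on $[1,4]$, so $s<-\tfrac12$ on $[1,4]$. This is absurd on $[1,3]$.

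Whenever $K^-$ overhangs part of $K^+$, no graph can pass between them. Concretely, your $1$--lipschitz envelope does not stay within $\eta$ of $N$: it descends from the corner near $(3,\tfrac12)$ with slope $-1$ and meets $\{4\}\times[-\tfrac12,1]\subseteq K^+$. Such a $K$ is of course not a fibre of an $h$ as in the lemma. But excluding it is precisely the content of the lemma, so your construction of $s$ cannot rely on that.

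The missing idea is to let the separating hypersurface fold back under overhangs, like the boundary of a thin neighbourhood of $K^-\cup(\reals^k\times\{-1\})$. The paper realises this as a regular level set $M=\lambda^{-1}(t_0)$ of a smooth function $\lambda$. Its zero set is the clopen piece containing the bottom, and it equals $t+1$ outside a ball $B_0$. Then $M\cap K=\emptyset$, and $M$ coincides with $\reals^k\times\{t_0-1\}$ off $B_0$.

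Since $M$ is no longer a graph (nor, in general, homeomorphic to $\reals^k$), your vertical straight-line homotopy is unavailable. The degree of $h|_M$ must instead be read off at infinity. Passing to one-point compactifications, $\bar h$ is trivial on $H_k(\bar M;\integers/2)$ because $h|_M$ misses $p$. Hence it is trivial on $H_k(\bar M,M;\integers/2)$. Yet by excision and Lemma~\ref{lem:local_hom}, it is injective there. This uses that on the slice $\reals^k\times\{t_0-1\}$ the map $h$ is uniformly close to the bilipschitz map $h|_{\reals^k\times\{\delta\}}$, hence is a quasi-isometric embedding. This contradiction is what replaces your degree step.
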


\begin{proof}
Fix $p\in\reals^k$ and let $A=f^{-1}(p)$. It will be useful several times below that, since $h$ is proper, $A$ is compact.  Let $R^-=\reals^k\times \{-1\}$ and $R^+=\reals^k\times \{+1\}$. If some connected component of $A$ intersects both $R^-$ and $R^+$, then we are done.  Therefore, suppose that every connected component of $A$ is disjoint from either $R^-$ or $R^+$. 

\setcounter{claim}{0}

\begin{claim}\label{claim:component}
 There exists a clopen $C\subset R^-\cup A\cup R^+$ with $R^-\subseteq C$ and $C\cap R^+=\emptyset$.
\end{claim}

\begin{proof}
The connected component $C'$ of $R^-\cup A\cup R^+$ that contains $R^-$ is disjoint from $R^+$ because we are assuming that no connected component of $A$ intersects both $R^-$ and $R^+$. In compact Hausdorff spaces it is well-known that connected components are intersections of clopen sets. Therefore, for any closed ball $B$ in $\reals^k\times [-1,1]$, there exists a clopen set $D$ of $B\cap(R^-\cup A\cup R^+)$ containing $C'\cap B$. Take $B$ to be a ball that contains the compact set $A$ and note that $D\cup R^-$ is clopen, and  set $C=D\cup R^-$.
\end{proof}

 Fix $C$ as in Claim \ref{claim:component}.  The goal of the next claim is to produce a codimension--$1$ submanifold $M\subset \reals^k\times [-1,1]$ which agrees with $\reals^k\times \{\star\}$ outside of some ball, and is disjoint from $A$.  This submanifold will be a level set of the function $\lambda$ from the claim.
 
\begin{claim}\label{claim:smooth}
    There exists a ball $B_0$ in $\reals^k\times[-1,1]$ and a smooth function $\lambda:\reals^k\times[-1,1]\to \reals$ such that $\lambda(x,t)=t+1$ on $\reals^k\times[-1,1]-B_0$, and $\lambda^{-1}(0)=C$.
\end{claim}

\begin{proof}
    Let $B_2\subset B_1\subset B_0$ be open balls $\reals^k\times[-1,1]$ containing $A$, so that $C\cap B_2^c=R^-\cap B_2^c$. Using a bump function, we can construct a smooth function $\lambda_1:\reals^k\times[-1,1]\to[0,2]$ such that $\lambda_1=0$ on $B_2$ and $\lambda_1(x,t)=t+1$ on the complement of $B_1$. Consider also a smooth function $\lambda_2:\reals^k\times[-1,1]\to[0,1]$ such that $\lambda_2^{-1}(0)=C\cup B_0^c$. Then we can take $\lambda=\lambda_1+\lambda_2$, since $\lambda=\lambda_2$ on $B_2$, $\lambda(x,t)=\lambda_1(x,t)=t+1$ on the complement of $B_0$, and on $B_0-B_2$ we have that $\lambda$ is the sum of two non-negative functions that both vanish exactly on $(B_0-B_2)\cap C= (B_0-B_2)\cap R^-$.
\end{proof}

Note that there exists $\epsilon_0>0$ such that $\lambda(x,t)>\epsilon_0$ for all $(x,t)\in A-C$, using that $A$ is compact and $C\cap A$ clopen in $A$. Consider a regular value $t_0\in (0,\epsilon_0)$ for $\lambda$. Then $M:=\lambda^{-1}(t_0)$ has the following properties:
\begin{itemize}
\item $M$ is a manifold.
    \item $M\cap A=\emptyset$, so $h|_M$ is not surjective.
    \item $M-B_0=\reals^k\times \{t_0-1\}-B_0$.

\end{itemize}

We can extend $h:M\to \reals^k$ to a map $\bar h$ between the respective one-point compactifications $\bar M$ and $\bar\reals^k\cong S^k$, which are closed $k$-manifolds, and therefore have non-trivial $k$-th homology with $\mathbb Z/2$-coefficients. We consider the following commutative diagram involving local homologies at the compactification point, with $\mathbb{Z}/2$-coefficients.

\begin{center}
\begin{tikzcd}
H_k(\bar M) \arrow[r]\arrow[d] & H_k(\bar M,M)\arrow[d]\\
H_k(\bar \reals^k) \arrow[r] & H_k(\bar\reals^k,\reals^k)
\end{tikzcd}
\end{center}

The horizontal maps come from long exact sequences for pairs, and they are surjective since $\bar M$ and $\bar \reals^k$ are closed manifolds. The vertical arrows are induced by $\bar h$.

Since $\bar h$ is not surjective, it induces the trivial map at the level of $H_k$, which forces the right vertical map to be the trivial. However, excision and the topological Lemma \ref{lem:local_hom} below show that it is injective, a contradiction.
\end{proof}

\begin{lem}
\label{lem:local_hom}
    Let $X$ be the complements in $\reals^k$ of an open ball $B$, and let $f:X\to \reals^k$ be a continuous quasi-isometric embedding. Let $\bar X,\bar \reals^k$ be the one-point compactifications, obtained adding the point $p$, and let $\bar f$ be the extension of $f$ with $\bar f(p)=p$.  Then $\bar f_*:H_k(\bar X, X)\to H_k(\bar \reals^k, \reals^k)$ (where we use $\mathbb Z/2$ coefficients) is an isomorphism.
\end{lem}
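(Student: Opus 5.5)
Both local homology groups are $\mathbb Z/2$, generated by the fundamental class of a small sphere around $p$. So the plan is to show that $\bar f$ has local degree $1$ mod $2$ at $p$, by comparing $f$ near infinity with a homeomorphism.

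\textbf{Reduction to large spheres.} By excision, $H_k(\bar X,X)\cong H_k(\bar X,\bar X-\{p\})$. Since $\bar X-\{p\}$ deformation retracts onto a large sphere $S_R=\{|x|=R\}$ (for $R$ large, $B\subset B_R$), the boundary map identifies this group with $\tilde H_{k-1}(S_R)\cong\mathbb Z/2$. The same holds for $H_k(\bar\reals^k,\reals^k)$, which is identified with $\tilde H_{k-1}(\reals^k-B_{R'})\cong\tilde H_{k-1}(S_{R'})$. The map $\bar f_*$ is well defined once we know $\bar f$ is continuous at $p$. This holds because $f$ is a quasi-isometric embedding, hence proper: $|f(x)|\to\infty$ as $|x|\to\infty$. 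In particular, for $R$ large, $f(S_R)\subset\reals^k-B_{R'}$. Under these identifications, $\bar f_*$ corresponds to $(f|_{S_R})_*:H_{k-1}(S_R)\to H_{k-1}(\reals^k-B_{R'})\cong H_{k-1}(S_{R'})$. So we must show this map is nonzero mod $2$, i.e. that $f|_{S_R}$ has odd degree around the origin.

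\textbf{Degree computation.} The natural approach is to show that, for points $y$ far from $f(B_R)$ (or rather from $f(X\cap B_R)$), the mod $2$ degree of $f|_{S_R}$ around $y$ equals the number of preimages of $y$ in $X-B_R$, computed by a generic-point argument. To do this, I would use the quasi-isometric embedding property: $f$ is coarsely surjective onto $\reals^k$ outside a bounded set. This is the key geometric input, and the main obstacle. For $k\ge 2$, a standard fact says a continuous quasi-isometric embedding $\reals^k\to\reals^k$ (or from the complement of a ball) is coarsely surjective near infinity. This follows from a Brouwer invariance-of-domain type argument: otherwise one finds arbitrarily large balls in the complement of the image, and a degree argument gives a contradiction.

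To avoid circularity, I would instead argue directly. First, homotope $f|_{S_R}$, through maps avoiding $B_{R'}$, to the radial map $x\mapsto f(x)/|f(x)|\cdot R'$. This is legitimate since $|f|\ge R'$ on $S_R$. The result is a map $g_R:S_R\to S_{R'}$.

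\textbf{Showing $\deg g_R$ is odd.} Consider antipodal points. Since $f$ is a QI embedding, $|f(x)-f(-x)|\ge 2R/K-K$. At the same time $|f(x)|\le KR+C$, so the directions $g_R(x)$ and $g_R(-x)$ cannot coincide when $R$ is large. More robustly, I would use that $f$ is uniformly close, at scale $R$, to a map that is bilipschitz on $S_R$ after rescaling. A rescaled limit along a sequence $R_n\to\infty$ (an asymptotic cone argument) yields a bilipschitz map $\reals^k-\{0\}\to\reals^k$, hence an injective continuous map $S^{k-1}\to\reals^k-\{0\}$ that is homogeneous. By Borsuk–Ulam / invariance of domain, an injective map of the sphere that extends to a homogeneous injective map of $\reals^k$ has degree $\pm1$, because the extension is a homeomorphism onto an open set. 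This is odd, and degree is stable under small perturbations for large $n$, so $\deg g_{R_n}$ is odd. Since the degree is independent of $R$ by homotopy, $\bar f_*$ is an isomorphism. I expect the rescaling and stability step to be the delicate part. A cleaner alternative is to apply Borsuk–Ulam directly: show $g_R(-x)\ne g_R(x)$ always, which gives an odd degree. I would try that route first.
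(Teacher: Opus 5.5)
Your reduction to showing that $f|_{S_R}\colon S_R\to\reals^k-\{0\}$ has odd degree for large $R$ is fine. (Properness of $f$ gives a map of pairs $(\bar X-B_R,X-B_R)\to(\bar\reals^k-B_{R'},\reals^k-B_{R'})$, and naturality of excision and of the connecting maps does the rest.)

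\textbf{The gap.} The problem is the degree computation. The route you say you would try first fails. The bounds $|f(x)-f(-x)|\geq 2R/K-K$ and $|f(x)|\leq KR+C$ do not stop $f(x)$ and $f(-x)$ from lying on the same ray from the origin, at distances of order $R/K$ and $KR$, and this really happens. In $\reals^2$, compose the stretch $(r,\theta)\mapsto(r\rho(\theta),\theta)$, with $\rho>0$ smooth and not $\pi$--periodic, with the twist $(r,\theta)\mapsto(r,\theta+c\log r)$. Both are bilipschitz homeomorphisms, and the image of $(R,\theta)$ has direction $\theta+c\log R+c\log\rho(\theta)$. The function $\delta(\theta)=c(\log\rho(\theta+\pi)-\log\rho(\theta))$ satisfies $\delta(\theta+\pi)=-\delta(\theta)$, so for $c$ large it takes the value $\pi$. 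Then $g_R(x)=g_R(-x)$ for some antipodal pair, for every $R$, and there is no Borsuk--Ulam shortcut.

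\textbf{The rescaling route works, with two fixes.} First, the limit $F(x)=\omega\text{-}\lim_n f(R_nx)/R_n$ is a $K$--bilipschitz map $\reals^k\to\reals^k$ with $F(0)=0$, hence $|F|\geq 1/K$ on $S^{k-1}$. It is not homogeneous in general: the twist above is its own limit up to a rotation. You do not need homogeneity. By invariance of domain, $F$ is a homeomorphism onto an open set, so its local degree at $0$ is $\pm1$. Since $F$ is injective on the closed unit ball, this equals the degree of $F|_{S^{k-1}}$ about $0$.

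Second, the stability step you flagged goes as follows. The map $f_n(x):=f(R_nx)/R_n$ satisfies $|f_n(x)-f_n(y)|\leq K|x-y|+C/R_n$. Let $N$ be a finite $\delta$--net in $S^{k-1}$. For $\omega$--a.e. $n$ you have $|f_n-F|<\delta$ on $N$ and $C/R_n<\delta$, hence $|f_n-F|\leq(2K+2)\delta$ on all of $S^{k-1}$. Choose $(2K+2)\delta<1/K$. Then the straight-line homotopy from $f_n|_{S^{k-1}}$ to $F|_{S^{k-1}}$ avoids $0$, so $\deg(f|_{S_{R_n}})=\pm1$ for such $n$, and hence for all large $R$. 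Only the coarse lipschitz bound is used, not any equicontinuity of $f$.

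\textbf{Comparison with the paper.} With these repairs you have a correct proof by a genuinely different route. The paper first upgrades $f$ to a quasi-isometry (coarse surjectivity, citing Drutu--Kapovich) and fixes a quasi-inverse $g$. It then argues with chains: if $f_*[\partial c]$ bounded a fine chain $d$ far from the origin, coarsely pulling $d$ back through $g$ would give a chain outside $B$ with boundary $\partial c$, contradicting $[\partial c]\neq 0$ in $H_{k-1}(X)$. Your argument avoids both the coarse surjectivity input and the coarse chain construction. It replaces them with invariance of domain on the bilipschitz limit and an elementary approximation estimate, and it even gives degree $\pm1$ over $\mathbb Z$. The paper's argument, by contrast, stays at finite scale and uses no ultralimits.
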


\begin{proof}
The map $f$ is a quasi-isometry by \cite[Lem. 10.84]{DrutuKapovich:book}, so we can fix a quasi-inverse $g$. Choose some ball $B'\supset B$, such that $0\notin f(\reals^k-B')$, and large enough to be determined by the following argument. 

There is a chain $c$ representing a generator of $[c]\in H_k(\bar X,X)$, whose boundary $\partial c=\sum \sigma_i$ is supported on $\partial B'$, with the image of each $\sigma_i$ having diameter at most $1$. For $B''$ a  ball in $\reals^k$, there are isomorphisms 
\begin{center}
\begin{tikzcd}
H_k(\bar \reals^k,\reals^k) &\arrow[l] H_k(\bar \reals^k -B'',\reals^k -B'') \arrow[r] & H_{k-1}(\reals^k-B'')
\end{tikzcd}
\end{center}

\noindent respectively given by excision and by the connecting homomorphism (the latter is an isomorphism since $\bar \reals^k -B''$ is contractible). 

In view of this, and by construction of the connecting homomorphism, $\bar f_*([c])$ represents a non-trivial element of $H_k(\bar \reals^k,\reals^k)$ provided that $\bar f_*([\partial c])$ is a non-trivial element of $H_{k-1}(\reals^k-B'')$ for some ball $B''$ containing $0$ and not intersecting $f(\boundary B')$. 

Suppose instead $\bar f_*([\partial c])=[\partial d]$ for some chain $d$ supported outside a sufficiently large ball $B''$. Using barycentric subdivisions, we can assume that $d$ consists of singular simplices whose images have diameter at most $2$. We can form a chain $d'$ in $\reals^k$ with $\partial d'=\partial c$ by coarsely pulling back $d$ under the quasi-isometry $g$. More precisely, we can consider singular simplices which are affine maps, and whose vertices each get mapped to either the image of a vertex of some $\sigma_i$, or of the form $g(\sigma(v))$ for some vertex $v$ of a singular simplex $\sigma$ of $d$. If $B''$ is sufficiently large, we can arrange $d'$ to be supported outside $B$. But this is a contradiction because $\partial c$ does not represent the trivial class in $H_{k-1}(X)$ because the connecting homomorphism $H_k(\bar X, X)\to H_{k-1}(X)$ is an isomorphism.
\end{proof}

We now state and prove the last remaining lemma needed to produce our arc.

\begin{lem}
\label{lem:arc-in-preimage}
    Let $X\subseteq \reals^k\times[-1,1]$ be a compact connected set with $\haus^1(X)<\infty$. Then any two distinct points of $X$ are connected by a nontrivial rectifiable arc in $X$.
\end{lem}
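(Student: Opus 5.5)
The plan is to show that $X$ is a Peano continuum (compact, connected, locally connected, metrisable). The classical theorem that Peano continua are arcwise connected then gives an arc between any two distinct points. A separate argument will then show that this arc can be chosen rectifiable, using $\haus^1(X)<\infty$. This is exactly the classical theory of continua of finite linear measure (Eilenberg--Harrold), so I would cite \cite{EilenbergHarrold:continua} for the key step and only sketch it.

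\emph{Step 1 (local connectedness).} I would show that a compact connected metric space with $\haus^1(X)<\infty$ is locally connected. Suppose not. Then there is a point $x$, a radius $\rho>0$, and infinitely many components of $X\cap \overline{B}(x,\rho)$ meeting $B(x,\rho/2)$. By connectedness of $X$ and the boundary bumping theorem, each such component also meets $\partial B(x,\rho)$. Hence each has diameter at least $\rho/2$.

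The basic estimate for continua is that a connected set $K$ has $\haus^1(K)\geq \diam K$. Applying this to infinitely many disjoint components, each contributing at least $\rho/2$, gives $\haus^1(X)=\infty$, a contradiction. So $X$ is a Peano continuum, and by the Hahn--Mazurkiewicz--Moore theorem any two distinct points $p,q$ are joined by an arc $\alpha\subseteq X$.

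\emph{Step 2 (rectifiability).} Any arc $\alpha$ in $X$ satisfies $\haus^1(\alpha)\leq\haus^1(X)<\infty$. I would use the fact that an arc with finite $\haus^1$-measure is rectifiable and its length equals $\haus^1(\alpha)$. To see this, parametrise $\alpha$ by $\gamma:[0,1]\to\alpha$. For any partition $0=t_0<\dots<t_m=1$, the subarcs $\gamma([t_{j-1},t_j])$ overlap only at endpoints. Each is connected, so its $\haus^1$-measure is at least the distance between its endpoints. Summing over the subarcs gives $\sum_j|\gamma(t_j)-\gamma(t_{j-1})|\leq\haus^1(\alpha)$. The length is the supremum of these sums, so it is finite. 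Reparametrising by arc length gives a rectifiable arc, and it is nontrivial because $p\neq q$.

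The main obstacle is the estimate $\haus^1(K)\geq\diam K$ for connected $K$, which both steps use. I would prove it by fixing $a\in K$ and considering the $1$-Lipschitz map $y\mapsto d(a,y)$. Its image is a connected subset of $\reals$ containing $[0,\sup_{y} d(a,y)]$, and Lipschitz maps do not increase $\haus^1$. This gives $\haus^1(K)\geq\sup_{y\in K} d(a,y)$, and choosing $a$ appropriately yields at least $\diam K/2$. The constant $1/2$ suffices for Step 1. For Step 2 I would use the sharp version directly: project the subarc $\gamma([t_{j-1},t_j])$ onto the line through its endpoints, whose image is a segment of length equal to the distance between them. The ambient $\reals^k\times[-1,1]$ plays no role beyond making $X$ a compact metric space.
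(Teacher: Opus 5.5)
Your proof is correct, but it takes a different route from the paper's. The paper cites \cite[Thm. 2]{EilenbergHarrold:continua} for the existence of a rectifiable path in $X$ between the two points. It then uses compactness and \cite[Prop. 2.5.19]{BBI} to take a shortest such path, which is necessarily injective and hence an arc.

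You instead reprove the continuum-theoretic input yourself, in three steps.
\begin{enumerate}
\item The estimate $\haus^1(K)\geq\diam K$ for connected $K$, together with boundary bumping, shows $X$ is locally connected.
\item Arcwise connectedness of Peano continua then gives an arc between the two points.
\item The same estimate applied to subarcs shows that \emph{every} arc in $X$ has length at most $\haus^1(X)$, so no minimisation step is needed.
\end{enumerate}
Your version is self-contained, works in any compact metric space, and gives the stronger statement that any arc in $X$ is rectifiable. The paper's version is shorter because the hard part is outsourced to the citation.

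Two minor points.
\begin{enumerate}
\item The reduction from failure of local connectedness to ``infinitely many components of $X\cap\overline{B}(x,\rho)$ meet $B(x,\rho/2)$'' deserves a line of justification. If there are only finitely many such components, those not containing $x$ are closed and miss a smaller ball about $x$. So $X$ is connected im kleinen at $x$, and this holding at every point implies local connectedness.
\item Your distance-function argument already gives $\haus^1(K)\geq\sup_{y\in K}d(a,y)$ for every $a\in K$. Taking the supremum over $a$ yields the full bound $\diam K$. Taking $a=\gamma(t_{j-1})$ gives exactly the chord bound you need in Step 2. So neither the factor $1/2$ nor the projection onto a line is needed.
\end{enumerate}
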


\begin{proof}
By \cite[Thm. 2]{EilenbergHarrold:continua}, there is a  rectifiable path in $X$ joining the given points.  By \cite[Prop. 2.5.19]{BBI}, there is a shortest such rectifiable path, which must be an arc.
\end{proof}

\begin{cor}\label{cor:arc}
Let $f:\reals^k\times [-1,1]\to\reals^k$ be a proper lipschitz map such that $f|_{\reals^k\times\{\delta\}}$ is bilipschitz for some $\delta$.  Then for $\haus^k$--almost all $p\in \reals^k$, there is a rectifiable arc $\alpha:[-1,1]\to \reals^k\times[-1,1]$ such that
\begin{itemize}
    \item $f\circ \alpha$ is the constant map to $p$;
    \item $\alpha^{-1}(\reals^k\times\{1\})=\{1\}$ and $\alpha^{-1}(\reals^k\times\{-1\})=\{-1\}$;
    \item for $\haus^1$--almost all $t\in[-1,1]$, the Jacobian $\jacobian{f}(\alpha(t))$ has rank $k$.
\end{itemize}
\end{cor}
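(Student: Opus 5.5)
The plan is to combine Lemma \ref{lem:geometric-analysis-magic}, Lemma \ref{lem:path-top-to-bottom} and Lemma \ref{lem:arc-in-preimage}. By Lemma \ref{lem:geometric-analysis-magic}, there is a full-measure set $G\subseteq\reals^k$ such that for $p\in G$ the fibre $A_p:=f^{-1}(p)$ has $\haus^1(A_p)<\infty$, and $\jacobian f$ has rank $k$ at $\haus^1$--almost every point of $A_p$. Fix $p\in G$. Since $f$ is proper, $A_p$ is compact. By Lemma \ref{lem:path-top-to-bottom}, applied with $h=f$, there is a connected subset $\alpha'\subseteq A_p$ meeting both $R^-=\reals^k\times\{-1\}$ and $R^+=\reals^k\times\{1\}$. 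Its closure is still connected, lies in $A_p$, and meets both. So we may take $\alpha'$ to be a compact connected set with $\haus^1(\alpha')\le \haus^1(A_p)<\infty$.

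Next, pick $u\in\alpha'\cap R^-$ and $v\in\alpha'\cap R^+$; these are distinct. Lemma \ref{lem:arc-in-preimage} gives a rectifiable arc $\beta\subseteq\alpha'$ from $u$ to $v$. Since $\beta\subseteq A_p$, $f$ is constant equal to $p$ on $\beta$.

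To get the endpoint condition, note that $\beta^{-1}(R^+)$ and $\beta^{-1}(R^-)$ are disjoint closed subsets of the parameter interval. Let $t_-$ be the last time in $\beta^{-1}(R^-)$ before the first time $t_+$ in $\beta^{-1}(R^+)$ after it. More precisely, take $t_+=\min\beta^{-1}(R^+)$ and $t_-=\max\left(\beta^{-1}(R^-)\cap[0,t_+]\right)$. The subarc $\beta|_{[t_-,t_+]}$ meets $R^\pm$ only at its endpoints. It is nondegenerate because the two sets are disjoint. Reparametrising it linearly to $[-1,1]$ gives $\alpha$ with $f\circ\alpha\equiv p$, $\alpha^{-1}(R^+)=\{1\}$ and $\alpha^{-1}(R^-)=\{-1\}$.

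The final condition needs a small measure comparison. The bad set $Z=\{z\in A_p:\rank \jacobian f(z)<k\}$ has $\haus^1(Z)=0$. I will parametrise $\alpha$ by a constant multiple of arclength, which is possible since $\alpha$ is rectifiable. Then $\alpha$ is bilipschitz onto its image, because arclength parametrisation of an arc is $1$--lipschitz and injective, and $\haus^1$ of the image equals length. Hence $\alpha^{-1}(Z)$ has $\haus^1$--measure $0$ in $[-1,1]$: its measure equals, up to the scaling constant, $\haus^1(\alpha([-1,1])\cap Z)=0$, using the area formula for injective lipschitz curves. This establishes the third bullet.

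I expect the main care point to be this last identification of arclength measure on $[-1,1]$ with $\haus^1$ on the image. For injective rectifiable curves this is the standard fact that length equals $\haus^1$ of the image. The other steps are direct consequences of the cited lemmas.
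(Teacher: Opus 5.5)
Your proposal is correct and is the same argument as the paper's, which simply states that the corollary follows from Lemmas \ref{lem:geometric-analysis-magic}, \ref{lem:path-top-to-bottom} and \ref{lem:arc-in-preimage}; your closure, trimming and reparametrisation steps correctly fill in details the paper leaves implicit. One slip: an arc parametrised by arclength need not be bilipschitz onto its image (an arc with a cusp already fails this), but you never use it --- the area formula for the injective arclength parametrisation, which pushes $\leb^1$ forward to $\haus^1$ on the image, is exactly what gives the third bullet.
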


\begin{proof}
This follows from Lemma \ref{lem:geometric-analysis-magic}, Lemma \ref{lem:path-top-to-bottom}, and Lemma \ref{lem:arc-in-preimage}.
\end{proof}

We will also need the following criterion for a lipschitz map $f:\reals^k\times [-1,1]\to \reals^{k+1}$ to admit a point where the Jacobian full rank. Roughly, the conditions required are that there exist a hyperplane and a curve that crosses it at a point where the Jacobians of components of $f$ have maximal rank.  This lemma is used to prove Corollary \ref{cor:full_rank}, where $f$ comes from the gate map to a standard box.

\begin{lemma}
\label{lem:full_rank_h}
    Let $h:\reals^k\times [-1,1]\to \reals^{k+1}$ be a lipschitz map with the following properties, where $\pi_1:\reals^{k+1}\to \reals^k$ is the projection to the first $k$ factors, $\pi_2:\reals^{k+1}\to \reals$ is the projection to the last factor, and we denote by $0_k$ and $0_{k+1}$ the origins of $\reals^k$ and $\reals^{k+1}$.
\begin{enumerate}
    \item $\jacobian{\pi_1\circ h}$ has rank $k$ at $0_k$.
    \item There is a lipschitz curve $\alpha:[-1,1]\to \reals^k\times [-1,1]$ with $\alpha(0)=0_{k+1}$ and such that $\jacobian{\pi_2\circ h\circ \alpha}$ has rank 1.
    \item $\pi_1\circ h\circ \alpha$ is a constant map.
\end{enumerate}
Then $\jacobian{h}$ has rank $k+1$ at some point of $\reals^k\times [-1,1]$.
\end{lemma}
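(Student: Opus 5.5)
The plan is to read the hypotheses pointwise at the origin and do linear algebra with the chain rule there. Concretely, I interpret hypothesis (1) as saying that $h$ is differentiable at $0_{k+1}$ (which is how the Jacobian of $\pi_1\circ h$ at that point makes sense) and that $D(\pi_1\circ h)(0_{k+1})=\pi_1\circ Dh(0_{k+1})$ has rank $k$. I interpret hypothesis (2) as saying that $\alpha$ is differentiable at $0$ and that $(\pi_2\circ h\circ\alpha)'(0)=c\neq 0$. Write $L=Dh(0_{k+1})$, a linear map $\reals^{k+1}\to\reals^{k+1}$, and $w=\alpha'(0)$.

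\textbf{Step 1 (chain rule).} Since $h$ is differentiable at $\alpha(0)=0_{k+1}$ and $\alpha$ is differentiable at $0$, we have $(h\circ\alpha)'(0)=Lw$; only pointwise differentiability of both maps at the relevant points is needed. Hypothesis (3) gives $\pi_1(Lw)=(\pi_1\circ h\circ\alpha)'(0)=0$. Hypothesis (2) gives $\pi_2(Lw)=c\neq0$. Hence $Lw=(0_k,c)$, and in particular $w\neq 0$.

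\textbf{Step 2 (rank count).} Since $\pi_1\circ L$ has rank $k$, it is surjective onto $\reals^k$. Choose $u_1,\dots,u_k$ with $\pi_1(Lu_i)=e_i$, so that $Lu_i=(e_i,t_i)$ for some reals $t_i$. Then the $k+1$ vectors $Lu_1,\dots,Lu_k,Lw$ are linearly independent. Indeed, in the matrix with these as columns, the last column is $(0_k,c)$ and the first $k$ columns restrict to the identity on the first $k$ coordinates, so the determinant is $c\neq 0$ (block triangular form). Hence $\rank L=k+1$, i.e.\ $\jacobian{h}$ has rank $k+1$ at $0_{k+1}$.

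\textbf{Main obstacle.} The delicate point is whether the hypotheses really grant differentiability of $h$ at the specific point $0_{k+1}$ and of $\alpha$ at $0$. A lipschitz $h$ is only differentiable almost everywhere, and the curve $\alpha$ is a null set, so the chain rule along $\alpha$ is not automatic. If (1) only holds in the sense that $0_{k+1}$ is a point of differentiability with full rank, the argument above goes through as written. If instead the hypotheses are meant almost everywhere along the curve, I would first reduce to a time $t_0$ at which three things hold simultaneously: $\alpha$ is differentiable with $(\pi_2\circ h\circ\alpha)'(t_0)\neq0$, $h$ is differentiable at $\alpha(t_0)$, and $\pi_1\circ Dh(\alpha(t_0))$ has rank $k$. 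Then I would rerun Steps 1–2 at that point. Producing such a $t_0$ is exactly where the arcs from Corollary~\ref{cor:arc}, which are chosen so that the Jacobian has full rank at $\haus^1$--almost every point, would be used. I expect the lemma itself to be stated so that these pointwise conditions are available, making the proof essentially the linear algebra above.
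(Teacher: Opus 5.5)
Your linear algebra is correct, and under your reading it even gives full rank at the origin. But that reading assumes something the lemma does not grant, and which fails in exactly the situation the lemma is built for. So there is a genuine gap.

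Hypothesis (1) concerns only $\pi_1\circ h$. It says the first $k$ coordinates of $h$ are differentiable at the origin with rank $k$, and assumes nothing about differentiability of $\pi_2\circ h$ there. Likewise, (2) is a statement about the composite $\pi_2\circ h\circ\alpha$, not about $Dh$ along $\alpha$ or about $\alpha'(0)$. Your inference that $h$ is differentiable because the Jacobian of $\pi_1\circ h$ makes sense is a non sequitur. Without $Dh(0_{k+1})$ there is no chain rule in Step 1.

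Your fallback does not rescue this. You cannot in general find $t_0$ with $h$ differentiable at $\alpha(t_0)$, because $\alpha$ is a $\leb^{k+1}$--null set and Rademacher says nothing about it. Concretely, for $k=1$ take $h(b,t)=(b,t+|b|)$ and $\alpha(s)=(0,s)$. All three hypotheses hold, yet $h$ is differentiable at no point of $\alpha$. The conclusion holds, but only at points with $b\neq 0$, so neither your Step 1 nor your reduction to a good $t_0$ makes sense.

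This is precisely the situation in the application (Corollary~\ref{cor:full_rank}). Corollary~\ref{cor:arc} gives full rank only for $\mathbf h=\pi_1\circ\mathbf h''$. The last coordinate $\gate'\circ\fcone$ is controlled only along $\alpha$, through $\mathbf h'$, and $\alpha$ is chosen before $\mathbf F'$ even exists.

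The missing idea is that you must find a full-rank point elsewhere, by a non-pointwise argument. The paper shows that $h(D)$ has nonempty interior and then uses the area formula $\int_D|\jacobian{h}|\,d\leb^{k+1}\geq\haus^{k+1}(h(D))$. The interior comes from the following steps.

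\begin{enumerate}
\item Normalize $h(0_{k+1})=0_{k+1}$ and change coordinates so that $\pi_1\circ h(b,t)=b+o(|(b,t)|)$.
\item Since $\pi_1\circ h\circ\alpha$ is constant and $\alpha$ is lipschitz, $\alpha$ stays $o(t)$--close to $\{0_k\}\times[-1,1]$.
\item Since $\pi_2\circ h\circ\alpha$ has nonzero derivative, for small $t>0$ the values $\pi_2\circ h(0_k,t)$ and $\pi_2\circ h(0_k,-t)$ have opposite signs and absolute value at least $\epsilon t$.
\item On a thin cylinder $N=B\times[-s,s]$, the straight-line homotopy from $h|_{\partial N}$ to the inclusion of $\partial N$, or to its reflection $(x,t)\mapsto(x,-t)$, avoids a small box around the origin.
\item Hence $h(\partial N)$ is homologically nontrivial in $\reals^{k+1}-\{p\}$ for every $p$ in that box, which forces $p\in h(N)$.
\end{enumerate}
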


\begin{proof}
Let $D=\reals^k\times [-1,1]$. Since $h$ is lipschitz, Rademacher's theorem \cite[Thm. 1.4]{Simon:GMT} implies that $\jacobian{h}$ is defined $\leb^{k+1}$--a.e. in $D$, and then \cite[Thm. 3.3]{Simon:GMT} says that $\int_{D}|\jacobian{h}|d\leb^{k+1}\geq \haus^{k+1}(h(D))$, so it suffices to show that $h(D)$ has nonempty interior.

Let $h_i=\pi_i\circ h$ and $\alpha_i=\pi_i\circ \alpha$,  for $i=1,2$. Note that $h(x)=(h_1(x),h_2(x))$ for all $x\in D$, and similarly $\alpha=(\alpha_1,\alpha_2)$. By (1), up to changing coordinates on $D$ and $\reals^{k+1}$ we have $\dist(h_1(b,t),b)=o(\max\{|b|,|t|\})$.

Let $\sigma$ be the map $\reals^{k+1}=\reals^k\times\reals$ to itself given by $(x,t)\mapsto (x,-t)$. Given a ball $B$ and $\epsilon>0$, we denote by $\epsilon B$ the ball of the same centre and radius $\epsilon r$ if the radius of $B$ is $r$.

\begin{claim}\label{claim:straight_line}
There exists $\epsilon_0>0$ such that for all $M>0$ there exists $r_0>0$ such that for any ball $B\subseteq \reals^k$ centred at $\vec 0$ of radius $r<r_0$ the following holds. For all $u\in(\partial B)\times [-Mr,Mr]$ the line segment in $\reals^{k+1}$ from $u$ to $h(u)$ is disjoint from $\epsilon_0 B\times [-\epsilon_0Mr,\epsilon_0Mr]$, as is the one from $\sigma(u)$ to $h(u)$.
\end{claim}

\begin{proofofclaim}{\ref{claim:straight_line}}
    This is because this holds even for the line segment joining $\pi_1(u)=\pi_1(\sigma(u))$ to $\pi_1(h(u))=h_1(u)$ since these two points are much closer to each other than they are to $0_k$.
\end{proofofclaim}

\begin{claim}\label{claim:signs}
    There exists $\epsilon>0$ such that for all sufficiently small $t>0$ we have one of the following:

    \begin{itemize}
        \item $h_2(0,t)>\epsilon t$ and $h_2(0,-t)<-\epsilon t$, or
        \item $h_2(0,t)<-\epsilon t$ and $h_2(0,-t)>\epsilon t$.
    \end{itemize}
     
\end{claim}

\begin{proofofclaim}{\ref{claim:signs}}
In this proof, we say that a function $f(t)$ is $\Theta(t)$ if there exists $\epsilon>0$ such that whenever $|t|$ is sufficiently small, we have $|\Theta(t)|>\epsilon|t|$.

    Since $\dist(h_1(b,t),b)=o(\max\{|b|,|t|\})$ and $\alpha$ is lipschitz, we also have $\dist(h_1(\alpha(t)),\alpha_1(t))=o(|t|)$. Since $h_1\circ\alpha$ is constant, and in fact in our coordinates constant equal to $0_k$, we have $|\alpha_1(t)|=o(t)$ (that is, $\alpha$ stays sublinearly close to $\{0_k\}\times [-1,1]$).

Since $\jacobian{h_2\circ \alpha}$ has rank 1 for the lipschitz function $h_2$, we have $|h_2\circ\alpha(t)|=\Theta(t)$, which since $h_2$ is lipschitz also implies $|\alpha(t)|=\Theta(|t|)$, and in turn we must also have $|\alpha_2(t)|=\Theta(t)$ since $|\alpha_1(t)|=o(t)$.

Consider the two curves $\alpha^{+}=\alpha|_{[0,1]}$ and $\alpha^{-}$ which we take to be the inverse of $\alpha|_{[-1,0]}$. Due to $|\alpha_2(t)|=\Theta(t)$ and $|\alpha_1(t)|=o(t)$, for each sufficiently small $t>0$ we have that there exists $s^+=\Theta(t)$ with $d(\alpha^+(s^+),(0_k,t))=o(t)$ or $d(\alpha^+(s^+),(0_k,-t))=o(t)$, and similarly for $\alpha^-$. But $\alpha^+(s^+)$ and $\alpha^-(s^-)$ cannot be $o(t)$-close because, after applying the lipschitz function $h_2$, these get mapped to points $\Theta(t)$ away from each other since $\jacobian{h_2\circ\alpha}$ has rank 1. Therefore, keeping into account that $h_2$ is lipschitz, we have that $h_2(0,t)$ is $o(t)$-close to, say, $h_2(\alpha^+(s^+))$, and then $h_2(0,t)$ is $o(t)$-close to $h_2(\alpha^-(s^-))$. Since $h_2\circ\alpha$ has nonzero differential and $s^{\pm}$ are $\Theta(t)$, we get the required conclusion.
\end{proofofclaim}

Figure \ref{fig:cyl} illustrates the following claim.

\begin{claim}\label{claim:straight_line_plus}
There exists $\epsilon>0$, a ball $B\subseteq \reals^k$ centred at $0_k$, and $s>0$ such that one of the following holds.
\begin{enumerate}
    \item For all $u\in\partial (B\times [-s,s])$ the line segment in $\reals^{k+1}$ from $u$ to $h(u)$ is disjoint from $\epsilon B\times [-\epsilon s,\epsilon s]$.
    \item For all $u\in\partial (B\times [-s,s])$ the line segment in $\reals^{k+1}$ from $\sigma(u)$ to $h(u)$ is disjoint from $\epsilon B\times [-\epsilon s,\epsilon s]$.
\end{enumerate}
\end{claim}

\begin{proofofclaim}{\ref{claim:straight_line_plus}}
    Let $\epsilon>0$ as in Claim \ref{claim:signs}. Since $h_2$ is lipschitz, there exists $M$ such that for $t>0$
    \begin{enumerate}
        \item if the first case of Claim \ref{claim:signs} holds for $t$, then we have $h_2(b,t)>\epsilon t/2$ and $h_2(b,-t)<\epsilon t/2$ whenever $|b|<t/M$,

    \item if instead the second case holds for $t$, then $h_2(b,t)<-\epsilon t/2$ and $h_2(b,-t)>\epsilon t/2$ whenever $|b|<|t|/M$.
    \end{enumerate}

     In particular, in the first case for such $b$ and $t$ the line segment from $h(b,t)$ to $(b,t)$  does not intersect $0_{k+1}$ (just by looking at the last coordinate), while in the second case this holds for the line segment from from $h(b,t)$ to $(b,-t)=\sigma(b,t)$.

     By Claim \ref{claim:straight_line}, we can then choose a ball of some radius $r$ and set $s=Mr$ such that the conclusion of the claim holds.
\end{proofofclaim}

\begin{figure}[h]
\medskip
   \begin{overpic}[width=0.5\linewidth]{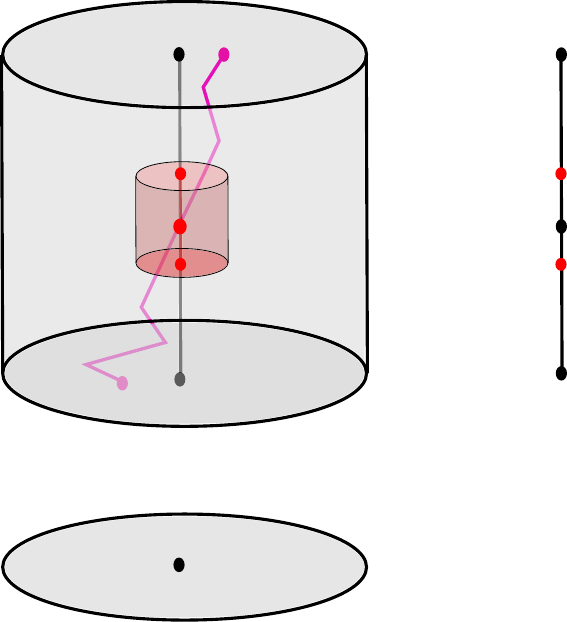}
   \put(70,62){$\stackrel{\pi_2}
   {\longrightarrow}$}
   \put(28,24){$\big\downarrow\ \pi_1$}
   \put(86,90){$s$}
   \put(84,71){$\epsilon s$}
   \put(86,63){$0$}
   \put(81,55){$-\epsilon s$}
   \put(83,40){$-s$}
   \put(31,8){$0_k$}
   \put(31,62){$0_{k+1}$}
   \put(10,40){$\alpha$}
   \put(36,77){$\alpha$}
   \put(50,8){$B$}
   \end{overpic}
   \medskip
    \caption{The cylinder $B\times [-s,s]$ produced in Claim \ref{claim:straight_line_plus}.}
    \label{fig:cyl}
\end{figure}

\textbf{Conclusion.} Let $\epsilon$ and $N=B\times [-s,s]$ be as in Claim \ref{claim:straight_line_plus}. We claim that $\epsilon B\times [-\epsilon s,\epsilon s]$ is contained in $h(D)$.

Let $p\in \epsilon B\times [-\epsilon s,\epsilon s]$, and let us argue that $p$ in fact lies in $h(N)$. We have that $\partial N$ represents a non-trivial element of $H_k(\reals^{k+1}-\{p\})$, as does $\partial N=\sigma(\partial N)$ (where we fix a triangulation of $\partial N$ in order to regard it as a cycle). By Claim \ref{claim:straight_line_plus}, $h(\partial N)$ is homotopic to $\partial N$ in $\reals^{k+1}-\{p\}$, so it also represents a non-trivial element of $H_k(\reals^{k+1}-\{p\})$. This forces $h(N)$ to contain $p$, as required.
\end{proof}

\subsection{Inductive setup}\label{subsec:inductive-setup}
We will prove Theorem \ref{thm:main} by induction on $k$.  In the base case, $k=0$, and the statement is immediate.  Therefore, suppose that $k\geq 0$ and that the theorem holds for $k$.  Fix a  coarse embedding $f:T_3^{k+1}\to X$.  

We retain the notation from Section \ref{subsec:counting} so, for instance, $d$ is the $\ell_\infty$ metric on $T_3^{k+1}$ and on $T_3^k$.  We also often use the convention that $T_3^{k+1}$ is identified with $T_3^k\times T_3$, so points in $T_3^{k+1}$ are denoted by ordered pairs $(v,p)$ where $v\in T_3^k$ and $p\in T_3$.

Fix a basepoint $o\in T_3$ and apply the theorem to the restriction $f_k$ of $f$ to $T_3^k\times \{o\}$.  This yields the following data:
\begin{itemize}
    \item An asymptotic cone $(\hatcone,\cdist)$ of $(\widehat X,\hat\dist)$, constructed using an ultrafilter $\omega$, a rescaling sequence $(r_n)$, and a sequence $(o_n)_n$ of observation points.

    \item For $1\leq i\leq k$, a pair of sequences $(a_i^n)_n,(b_i^n)_n$ of vertices in $T_3$, yielding a sequence of cuboids $C_n=\prod_{i=1}^k[a_i^n,b_i^n]$ in $T_3^k$ and hence maps $\hat f|_{C_n}:C_n\times\{o\}\to \widehat X$. 
\end{itemize}
Let $\ctreedist=\omega-\lim_n d/r_n$, which is the metric on $\tcone:=\omega-\lim_n T_3^{k+1}$.  Let $\mathbf C=\omega-\lim_n C_n\times\{o\}\subseteq \tcone$ be the rescaled ultralimit and let $\fcone:\mathbf C\to \hatcone$ be the ultralimit of the $\hat f_n|_{C_n\times\{o\}}$.

The inductive hypothesis is that there is a standard $k$--flat $\mathbf F\subseteq \hatcone$ whose gate map $\gate:\hatcone\to\mathbf F$ has the property that $\gate\circ\fcone:\mathbf C\to\mathbf F$ is $L_0$--bilipschitz for some $L_0\geq 1$.  Since $\gate:\hatcone\to\mathbf F$ is uniformly lipschitz by Lemma \ref{lem:hedge-gate-support}, we can choose $L_0$ so that $\fcone:\mathbf C\to \hatcone$ is an $L_0$--bilipschitz embedding.

Note that $\hat f:T_3^{k+1}\to \widehat X$ limits to a map $\fcone:\tcone\to \hatcone$ whose restriction to $\omega-\lim_nC_n\times\{o\}$ is the above map $\fcone:\mathbf C\to\hatcone$.  The map $\fcone$ is lipschitz, since $f$ is a coarse embedding, and we can enlarge $L_0$ as above so that $\hat f$ is $L_0$--lipschitz on $\tcone$.  

Since $\mathbf F$ is a standard $k$--flat, we also have the following data:
\begin{itemize}
    \item The support of $\mathbf F$ (Definition \ref{defn:support-standard-flat}) is $\{\mathbf U_1,\ldots,\mathbf U_k\}$, where $\bU_i=\omega-\lim_n U_i^n$, where, for all $i$, we have that $U_i^n\in \mathfrak S-\mathfrak S_{ql}$ for $\omega$--a.e. $n$.

    \item For each $i$, we have a valid sequence $(\gamma_i^n)_n$ of hierarchy intervals in $\widehat X$, such that the following holds for $\omega$--a.e. $n$: there is a uniformly quasimedian uniform quasi-isometric embedding $\gamma^n:\prod_{i=1}^k\gamma_i^n\to \widehat X$ whose image $H_n$ is a hierarchy box for $\{U_1^n,\ldots,U_k^n\}$ (see Definition \ref{defn:hierarchy-box}).  Moreover, the sequence $(H_n)_n$ is a valid sequence of hierarchy boxes (Definition \ref{defn:valid-boxes}).
    
    \item $\omega-\lim_n H_n=\mathbf F$.
\end{itemize}
We let $\gate_n:\widehat X\to H_n$ be the (coarse) gate map.  Recall that $\gate:\hatcone\to \mathbf F$ coincides with $\omega-\lim_n\gate_n$, by Lemma \ref{lem:hedge-gate-support}, so $\gate\circ \fcone$ is the ultralimit of the maps $\gate_n\circ \hat f$.

\subsection{The inductive step}\label{subsec:inductive-step}
The first step is to find a point $\mathbf p=(p_n)_n\in \tcone$ such that the boxes $\prod_{i=1}^k[a_i^n,b_i^n]\times \{p_n\}$ limit to a subspace $\mathbf C_{\mathbf p}$ of $\tcone$ whose image under $\fcone$ is disjoint from that of $\mathbf C$.  We will then use Corollary \ref{cor:arc} to join the slices $\mathbf C_{\mathbf o}$ and $\mathbf C_{\mathbf p}$ by an arc $\alpha$ such that moving along $\alpha$ does not change the gate to $\mathbf F$.  This is where we use the estimate from Proposition \ref{prop:counting}.

\begin{lem}\label{lem:good_direction}
There exists a sequence $(p_n)_n$ in $T_3$ with the following properties:
\begin{enumerate}
    \item $d(o,p_n)=r_n$ for $\omega$--a.e. $n$.

    \item Let $\bC_\bp=\omega-\lim_n C_n\times\{p_n\}$ and let $\bC_\bo=\omega-\lim_nC_n\times\{o\}$. Then $$\fcone(\bC_\bo)\cap\fcone(\bC_\bp)=\emptyset.$$
\end{enumerate}    
\end{lem}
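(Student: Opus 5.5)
We choose $p_n$ by counting, using Proposition \ref{prop:counting}. First, what goes wrong if the two slices meet? Then there are $\mathbf x=(x_n)_n\in\bC_\bo$ and $\mathbf y=(y_n,p_n)_n\in\bC_\bp$, with $x_n,y_n\in C_n$, such that $\hat\dist(\hat f(x_n,o),\hat f(y_n,p_n))=o(r_n)$. Since $\gate\circ\fcone$ is bilipschitz on $\bC_\bo$, we would like to compare $y_n$ with $x_n$. The key reduction is to show that a failure of (2) forces, for a single point, many points of the tree ball to have $\hat f$-image sublinearly close to it. Proposition \ref{prop:counting} forbids this once the number of such points is exponential with base larger than $a$.

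\textbf{Step 1: control the size of $C_n$.} Fix $\epsilon>0$ small. The cuboid $C_n$ has diameter $O(r_n)$ because $\bC_\bo$ is a bilipschitz $k$-flat piece. So we can cover $C_n\times\{o\}$ by $N(\epsilon)$ sets $Q_{n,1},\dots,Q_{n,N}$ of diameter $\le\epsilon r_n$, where $N(\epsilon)$ is polynomial in $1/\epsilon$ and independent of $n$. Pick centres $c_{n,j}$. If (2) fails for a choice $(p_n)$, we get $j$ and a point $y_n\in C_n$ with
\[\hat\dist(\hat f(c_{n,j},o),\hat f(y_n,p_n))\le 2L_0\epsilon r_n.\]
Because of the $\omega$-limit, this sublinear closeness, at scale $\epsilon r_n$ for every $\epsilon>0$, holds for $\omega$-a.e.\ $n$.

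\textbf{Step 2: count bad directions.} Fix $n$ large. The sphere of radius $r_n$ about $o$ in $T_3$ has $3\cdot 2^{r_n-1}$ points $p$. For a given $p$, call it \emph{$\epsilon$-bad} if some $y\in C_n$ satisfies $\hat\dist(\hat f(c_{n,j},o),\hat f(y,p))\le 2L_0\epsilon r_n$ for some $j$. Every pair $(y,p)$ of this kind lies in the $d$-ball of radius $R_n=O(r_n)$ around $(c_{n,j},o)$, with the implied constant depending only on the flat. Applying Proposition \ref{prop:counting} with base point $(c_{n,j},o)$, radius $R_n$, and $a$ chosen so that $a^{R_n}\ll 2^{r_n}$, we get at most $a^{R_n}$ such pairs for each $j$ once $\epsilon$ is small. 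The constant $R_n/r_n$ is fixed, so such an $a>1$ exists. The number of bad $p$ is therefore at most $N(\epsilon)a^{R_n}$, which is less than $2^{r_n}$ for large $n$.

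\textbf{Step 3: diagonalise over $\epsilon$.} Take $\epsilon_m\to0$. For each $m$ and each large $n$, the $\epsilon_m$-bad directions form a subset of the sphere of relative size $\to0$. A diagonal choice then gives $p_n$ with $d(o,p_n)=r_n$ that is not $\epsilon_{m(n)}$-bad, with $m(n)\to\infty$ slowly enough to stay in the range where Step 2 applies. If the $\omega$-limit images intersected, $(p_n)$ would be $\epsilon$-bad for $\omega$-a.e.\ $n$ for every fixed $\epsilon$, a contradiction.

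I expect the main obstacle to be Step 2's bookkeeping: the counting bound must beat $2^{r_n}$ uniformly over the centres and the whole cuboid, and the diagonalisation must interact correctly with the ultrafilter.
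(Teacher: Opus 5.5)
Your approach has a genuine gap, in Step 1: the cuboids $C_n$ do \emph{not} have diameter $O(r_n)$. By the inductive hypothesis, $\bC_\bo$ is a bilipschitz copy of the standard $k$--flat $\mathbf F\cong\reals^k$, so it is unbounded. Indeed the side lengths satisfy $r_n\ll d(a_i^n,b_i^n)$, and this is used later: Corollary \ref{cor:arc} needs $\bC_\bo$ to be a whole slice $\reals^k\times\{-1\}$. So there is no cover of $C_n\times\{o\}$ by $N(\epsilon)$ sets of diameter $\epsilon r_n$ with $N(\epsilon)$ independent of $n$. Likewise, the pairs $(y,p)$ in Step 2 do not lie in one ball of radius $O(r_n)$ about a centre, so Proposition \ref{prop:counting} cannot be applied as you do. A single sequence $(p_n)_n$ must work for every point of an unbounded set, and your proposal lacks the two ingredients that make this possible.

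First, far pairs need no counting. $\fcone$ is $L_0$--bilipschitz on $\bC_\bo$ and $L_0$--lipschitz on $\tcone$. Take $\mathbf x\in\bC_\bo$, $\mathbf y=(y_n,p_n)_n\in\bC_\bp$ and $\mathbf z=(y_n,o)_n\in\bC_\bo$. Then $\ctreedist(\mathbf y,\mathbf z)=1$, so $\fcone(\mathbf x)=\fcone(\mathbf y)$ would force $\ctreedist(\mathbf x,\mathbf z)\le L_0^2$. Hence only pairs with $d(x_n,y_n)\le (L_0^2+1)r_n$ can collide, and ``bad'' should be defined using only such $y$. This is the comparison you announce but never carry out.

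Second, for close pairs the paper takes as base points all vertices of $C_n$ within $j_nr_n$ of a basepoint, where $1\ll j_n\le r_n$. Since $C_n$ is a product of $k$ segments, there are only polynomially many such vertices in $r_n$, yet they contain representatives of every point of $\bC_\bo$. A union bound with Proposition \ref{prop:counting}, for one fixed $\epsilon$, then excludes at most $\beta(r_n)^2a^{cr_n}$ directions, with $\beta$ a polynomial and $c$ depending only on $L_0$. This is less than $3\cdot 2^{r_n-1}$ once $a$ is close enough to $1$.

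Your Step 3 diagonalises over the wrong parameter. Badness is monotone in $\epsilon$, so avoiding $\epsilon_{m(n)}$--bad directions with $\epsilon_{m(n)}\to 0$ only gives separation of order $\epsilon_{m(n)}r_n=o(r_n)$, which is invisible in the cone. Being $\epsilon$--bad for each fixed $\epsilon$ does not contradict it. What you need is one fixed $\epsilon$; the quantity that must grow slowly is the radius of the part of $C_n$ you control.
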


\begin{proof}
Let $L=100L_0$.  Let $\delta>1$ be a constant to be chosen momentarily.  Choose $a>1$ sufficiently small that $a^{L^2+2}<\delta$.  For this value of $a$, Proposition \ref{prop:counting} provides a constant $\epsilon>0$ with the property that 
$$|\{(v,w)\in T_3^{k+1}:d((v,w),(b,o))\leq r,\ \hat\dist(\hat f((v,w)),\hat f((b,o))<\epsilon r\}|<a^r$$
for all sufficiently large $r$, where $b\in T_3^k$ is a fixed basepoint.  Noting that $\omega-\lim_n r_n/d(a^n_i,b_i^n)=0$ for each $i$, fix a sequence $(j_n)_n$ of positive integers such that for all $i\leq k$ we have $$r_n\ll j_nr_n\ll d(a_i^n,b_i^n)$$ and $j_n\leq r_n$ for $\omega$--a.e. $n$.

For each $n$, let $N_n$ be the vertex set of $C_n\cap \neb_{j_nr_n}(b)\subseteq T_3^k$, where $T_3^k$ is given the natural product cell structure; all we need from $N_n$ is that it is a $1$--separated net. Since $j_n\leq r_n$, there is a fixed polynomial $\beta:\reals_+\to\reals_+$ such that $|N_n|\leq \beta(r_n)$ for $\omega$--a.e. $n$.  

Given $x,y\in N_n$ for which $d(x,y)\leq (L^2+2)r_n$, let 
$$A_{x,y}=\left\{p\in \neb^{T_3}_{r_n}(o):\hat d( \hat f(x,o),\hat f(y,p))< \epsilon r_n\right\}.$$
Since $d((x,o),(y,p))\leq (L^2+2)r_n$, we can apply Proposition \ref{prop:counting} with $r=(L^2+2)r_n$ and hence
\begin{equation}\label{eqn:A-count}\tag{$\star$}
|A_{x,y}|\leq a^{(L^2+2)r_n}.
\end{equation}

Let $\mathcal P=\{(x,y)\in N_n^2:d(x,y)\leq (L^2+2)r_n\}$ and let 
$A=\bigcup_{(x,y)\in\mathcal P}A_{x,y}$, which is to say that $A$ is the set of all $p\in T_3$ such that $\hat d( \hat f(x,o),\hat f(y,p))< \epsilon r_n$ for some $x,y\in N_n$ with $d(x,y)\leq (L^2+2)r_n$.  So, from the definition of $\beta$ and the estimate \eqref{eqn:A-count}, we get $|A|\leq \beta(r_n)^2\delta^{r_n}$.  Since $\beta$ depends only on the input data and our choice of $(j_n)_n$, which can be chosen in terms of the input data only, we can assume that $\delta$ is sufficiently small that $\beta(r_n)^2\delta^{r_n}<3\cdot 2^{r_n}$ for $\omega$--a.e. $n$.  So, for $\omega$--a.e. $n$, there is a point $p_n\in T_3$ such that $p_n\not \in A$ and $d(o,p_n)=r_n$.  Let $\bp=\omega-\lim_n p_n$. 

\begin{claim}\label{claim:close-disjoint}
Suppose that $\mathbf x\in\bC_\bo$, $\mathbf y\in\bC_\bp$, and $\ctreedist(\mathbf x,\mathbf y)<L^2+1$.  Then $\fcone(\mathbf x)\neq\fcone(\mathbf y)$.    
\end{claim}

\begin{proofofclaim}{\ref{claim:close-disjoint}}
Letting $\mathbf x,\mathbf y$ be as in the statement, we can choose $x_n,y_n\in C_n$ such that $\omega-\lim_n (x_n,o)=\mathbf x$ and $\omega-\lim_n(y_n,p_n)=\mathbf y$ and $d(x_n,y_n)\leq (L^2+2)r_n$ for $\omega$--a.e. $n$.  The first property implies that $x_n,y_n\in N_n$ for $\omega$--a.e. $n$.  Therefore, our choice of $(p_n)_n$ implies that  $\hat \dist(\hat f(x_n,o),\hat f(y_n,p_n))> \epsilon r_n-2L_0$ for $\omega$--a.e. $n$, and hence $\cdist(\fcone(\mathbf x),\fcone(\mathbf y))\geq \epsilon$.
\end{proofofclaim}

Now let $\mathbf x\in \bC_\bo$ and $\mathbf y\in \bC_\bp$ be arbitrary.  Suppose that $\mathbf z\in\bC_o$ satisfies $\ctreedist(\mathbf y,\mathbf z)\leq 1$.  Then since $\fcone$ is $L_0$--bilipschitz on $\bC_\bo$ and $L_0$--lipschitz on $\tcone$, our choice of $L$ ensures that 
\begin{eqnarray*}
\cdist(\fcone(\mathbf x),\fcone(\mathbf y))&\geq& \cdist(\fcone(\mathbf x),\fcone(\mathbf z))-\frac{L}{100}\\
&\geq& \frac{100(\ctreedist(\mathbf x,\mathbf y)-1)}{L}-\frac{L}{100}.
\end{eqnarray*}
Now, if $\ctreedist(\mathbf x,\mathbf y)\leq L^2+1$, then we are done by Claim \ref{claim:close-disjoint}, so we may assume that $\ctreedist(\mathbf x,\mathbf y)> L^2+1$.  Hence the above estimate shows that $\cdist(\fcone(\mathbf x),\fcone(\mathbf y))\geq 100L-L/100>0$.  Therefore, $\bf\hat f(\bf C_o)$ and $\bf \hat f(C_p)$ are disjoint, as required.
\end{proof}

Fix once and for all a sequence $(p_n)_n$ of points in $T_3$ satisfying the conclusion of Lemma \ref{lem:good_direction}, and let $\bp=\omega-\lim_n p_n$. 

\begin{convention}\label{conv:coordinates}
From here on, we often have to choose bilipschitz homeomorphisms identifying various (sub)spaces with subspaces of $\mathbb R^n$. To avoid cumbersome notation, we will call these any such choice a ``choice of coordinates'', and we will have always chosen coordinates when discussing, for instance, Jacobians. Notice that the property of having full rank Jacobian almost everywhere for a map between spaces identified with subspaces of $\mathbb R^n$ does not depend on the choice of coordinates.
\end{convention}

As a first instance of the convention, we fix a choice of coordinates for $\mathbf F$, identifying it with $\reals^k$.

\begin{defn}[Thickened flat $\bD$]\label{defn:new-block}
Define $\bD=\omega-\lim_n C_n\times[o,p_n]$.  Fix a choice of coordinates identifying $\bD$ with $\reals^k\times[-1,1]$.  
\end{defn}

\begin{defn}[Map $\mathbf h$]\label{defn:new-block-map}
Recall that $\gate\circ \fcone:\tcone\to\hatcone$ is the ultralimit of the maps $\gate_n\circ \hat f$, and define $\mathbf h=\gate\circ\fcone|_{\bD}:\bD\to \mathbf F$.
\end{defn}

Given a choice of coordinates for $\mathbf F$, we can regard $\mathbf h$ as a map $\reals^k\times[-1,1]\to \reals^k$.  In the next lemma, we produce the arc $\alpha$ promised earlier.

 \begin{lem}\label{lem:arc-in-nature}
      There exists $x\in \mathbf F$ such that $\mathbf h^{-1}(x)$ contains a rectifiable arc $\alpha$ with endpoints $a_\bo\in\bC_\bo$ and $a_\bp\in\bC_\bp$, and, moreover, at $\mathcal H^1$-almost-every point of $\alpha$ the Jacobian  $\jacobian{\mathbf h}$ has full rank.
  \end{lem}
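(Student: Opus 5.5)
The plan is to apply Corollary \ref{cor:arc} to $\mathbf h:\bD\cong\reals^k\times[-1,1]\to\mathbf F\cong\reals^k$, with the slice $\bC_\bo$ playing the role of $\reals^k\times\{\delta\}$. Choose the coordinates on $\bD$ so that $\bC_\bo$ corresponds to $\reals^k\times\{-1\}$ and $\bC_\bp$ to $\reals^k\times\{1\}$. This is possible because $\bD=\omega-\lim_n C_n\times[o,p_n]$ is the product of $\omega-\lim_n C_n$ with the limit of the geodesic segments $[o,p_n]$. The first factor is bilipschitz to $\reals^k$, since the $C_n$ have side lengths $\gg r_n$ and, by the inductive setup, are positioned so that their limit is a product of lines. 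The second factor has length $\omega-\lim_n d(o,p_n)/r_n=1$ by Lemma \ref{lem:good_direction}. The map $\mathbf h$ is lipschitz because $\fcone$ is $L_0$--lipschitz and $\gate$ is lipschitz by Lemma \ref{lem:hedge-gate-support}. Its restriction to $\bC_\bo=\reals^k\times\{-1\}$ is $\gate\circ\fcone|_{\mathbf C}$, which is bilipschitz by the inductive hypothesis.

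Next I will verify properness. Since $\mathbf h|_{\bC_\bo}$ is $L_0$--bilipschitz and $\bD$ lies within $\ctreedist$--distance $1$ of $\bC_\bo$, for any $\mathbf z=(\mathbf v,t)\in\bD$ we get
\[
\cdist(\mathbf h(\mathbf z),\mathbf h(\mathbf v,-1))\le L_0\cdot\mathrm{const}.
\]
Hence $\mathbf h(\mathbf z)$ is within bounded distance of a bilipschitz image of $\mathbf v$. Preimages of bounded sets therefore have bounded $\mathbf v$--coordinate, and since $t\in[-1,1]$ they are bounded. They are closed by continuity, hence compact in the proper space $\reals^k\times[-1,1]$. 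So $\mathbf h$ is proper.

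Corollary \ref{cor:arc} now gives, for $\haus^k$--almost every $x\in\mathbf F$, a rectifiable arc $\alpha\subseteq\mathbf h^{-1}(x)$ meeting $\reals^k\times\{\pm1\}$ exactly at its endpoints, along which $\jacobian{\mathbf h}$ has rank $k$ at $\haus^1$--almost every point. Pick any such $x$. The endpoints then give $a_\bo\in\bC_\bo$ and $a_\bp\in\bC_\bp$. Full rank of the Jacobian is independent of the choice of coordinates by Convention \ref{conv:coordinates}.

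The main obstacle is the identification $\bD\cong\reals^k\times[-1,1]$ with the slices matching $\reals^k\times\{\pm1\}$. This requires that $\omega-\lim_nC_n$ really is a copy of $\reals^k$, which needs $d(o_n$-preimage$,\partial C_n)\gg r_n$, and I would extract this from the validity of $\mathbf F$ together with $\gate\circ\fcone$ being bilipschitz onto $\mathbf F\cong\reals^k$. Disjointness from Lemma \ref{lem:good_direction} is not needed at this stage; it will be used later to show $\alpha$ is nondegenerate in $\hatcone$.
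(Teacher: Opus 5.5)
Your proposal is correct and follows the paper's proof. The paper likewise just applies Corollary~\ref{cor:arc} to $\mathbf h$ after choosing coordinates in which $\mathbf C$ corresponds to $\reals^k\times\{-1\}$, using that $\mathbf h$ is lipschitz and that $\mathbf h|_{\mathbf C}=\gate\circ\fcone|_{\mathbf C}$ is bilipschitz by the inductive hypothesis.

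Your explicit properness check supplies a hypothesis of the corollary that the paper leaves implicit. The check is that preimages of bounded sets are bounded because $\bD$ has thickness $1$ over $\bC_\bo$. The paper also takes for granted the identification $\mathbf C\cong\reals^k$ that you flag. This identification is immediate once the inductive hypothesis is read as saying $\gate\circ\fcone$ is a bilipschitz homeomorphism onto $\mathbf F\cong\reals^k$, so you need not route it through the validity of $\mathbf F$.
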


 \begin{proof}
 This is because we can apply Corollary \ref{cor:arc} to $\mathbf h$.  Indeed, $\mathbf h$ is lipschitz since $\gate$ and $\fcone$ are.  The map $\gate\circ\fcone:\mathbf C\to \hatcone$ is bilipschitz by our induction hypothesis, and this map is also the restriction to $\mathbf C\subseteq \mathbf D$ of the map $\mathbf h$; we can choose coordinates identifying $\mathbf D$ with $\reals^k\times [-1,1]$ in such a way that $\mathbf C$ is identified with $\reals^k\times \{-1\}$.  Hence the aforementioned corollary applies.
  \end{proof}

From now on, we fix $\alpha$ as in Lemma \ref{lem:arc-in-nature}.  The path $\mathbf h\circ\alpha$ is constant, but $\fcone\circ \alpha$ is not, and therefore has to make progress in some ultralimit of hyperbolic spaces from the HHS structure.  This provides the standard $1$--box $\mathbf F'$ in the statement of the lemma below.

\begin{lemma}
\label{lem:1-box}
    There exists a 1-dimensional standard box $\mathbf F'$, with gate $\mathbf g'$, such that there exists a point $\bf z$ of $\alpha$ where both $\jacobian{\bf h}$ and $\jacobian{\bf h'}$ have full rank, where $\mathbf h':=(\gate'\circ \hat f)|_{\alpha}$.
\end{lemma}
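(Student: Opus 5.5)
We will find a domain sequence along which $\fcone\circ\alpha$ makes linear progress, build a $1$--dimensional standard box from it, and then locate a common full-rank point of $\jacobian{\mathbf h}$ and $\jacobian{\mathbf h'}$ on $\alpha$.

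\textbf{Step 1: $\fcone\circ\alpha$ is nonconstant.} The endpoints satisfy $a_\bo\in\bC_\bo$ and $a_\bp\in\bC_\bp$. By Lemma~\ref{lem:good_direction}, $\fcone(a_\bo)\neq\fcone(a_\bp)$. Since $\alpha$ is rectifiable and $\fcone$ is lipschitz, $\fcone\circ\alpha$ is a rectifiable nonconstant path in $\hatcone$.

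\textbf{Step 2: choose a subarc and a domain sequence.} By the distance formula in $\widehat X$ (all domains outside $\mathfrak S_{ql}$, so each is bushy, or has bounded $\mathcal C U$ and is ignorable) and the fact that hierarchy paths realise $\hat\dist$, we pick two points $\alpha(t_1),\alpha(t_2)$ with images $\mathbf x\neq\mathbf y$. We choose them with $t_1<t_2$ close together, in a small neighbourhood of a density point of the set where $\jacobian{\mathbf h}$ has full rank.

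Represent $\mathbf x=(x_n)$ and $\mathbf y=(y_n)$. By the standard argument of \cite[Sec.~13]{HHS_I}, we find a sequence of domains $V_n\in\mathfrak S-\mathfrak S_{ql}$ with $\dist_{V_n}(x_n,y_n)\asymp r_n$ and $\sup_{W\propnest V_n}\hat\sigma_W(x_n,y_n)=o(r_n)$. The idea is to pass to a $\nest$--minimal domain carrying linear projection; if none exists, one argues via the realisation theorem and passing-up that all the distance is accounted for by sublinear contributions, contradicting $\mathbf x\neq\mathbf y$.

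\textbf{Step 3: build the box.} Let $p_n\in P_{V_n}$ be a gate point, and let $\gamma_n$ be the hierarchy interval in $\widehat F^{p_n}_{V_n}$ between the gates of $x_n$ and $y_n$. Then $(\gamma_n)_n$ is a small valid sequence of $1$--dimensional hierarchy boxes: we have $s_n=O(r_n)$, $e_n\ll r_n$, and $\hat\dist(o_n,\gamma_n)=O(r_n)$. Lemma~\ref{lem:hedge-gate-support} then gives the standard $1$--box $\mathbf F'$, a nondegenerate interval, together with its lipschitz gate $\gate'$. Moreover, $\gate'\circ\fcone$ sends $\mathbf x$ and $\mathbf y$ to the two distinct endpoints of $\mathbf F'$.

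\textbf{Step 4: find the common full-rank point.} The map $\mathbf h'=\gate'\circ\fcone\circ\alpha$, restricted to $[t_1,t_2]$, is a lipschitz map to an interval (in coordinates) with $\mathbf h'(t_1)\neq\mathbf h'(t_2)$. Hence its derivative is nonzero on a set of positive $\haus^1$--measure in $[t_1,t_2]$. By Lemma~\ref{lem:arc-in-nature}, $\jacobian{\mathbf h}$ has full rank at $\haus^1$--a.e. point of $\alpha$. Intersecting these two sets gives a point $\mathbf z$ where both Jacobians have full rank, and because one set has positive measure and the other full measure, the intersection is nonempty regardless of the choice of $t_1,t_2$.

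\textbf{Main obstacle.} The delicate step is Step~2: producing $V_n$ whose properly nested domains see only $o(r_n)$ of the distance, so that $\mathbf F'$ is genuinely a valid small box, while ensuring $\mathbf x$ and $\mathbf y$ have distinct gates. We would first try taking $V_n$ $\nest$--minimal among domains with $\dist_{V_n}(x_n,y_n)\geq\eta r_n$ for a suitable small $\eta>0$, and control the nested contributions using the large link axiom together with the argument already used in Claim~\ref{claim:large-link-application}.
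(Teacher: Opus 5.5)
Your overall route is the paper's. The paper applies \cite[Lemma 13.13]{HHS_I} to the two endpoint images $\fcone(\alpha(\pm\ell))$, which are distinct by Lemma~\ref{lem:good_direction}. It treats the resulting hierarchy paths as a small valid rank--one sequence, giving a standard $1$--box $\mathbf F'$ whose gate separates the two points. It then picks $\mathbf z$ in the intersection of the full-measure set where $\jacobian{\mathbf h}$ has full rank with a positive-measure set where $\jacobian{\mathbf h'}\neq 0$. Your Steps 1, 3 and 4 do exactly this; restricting to nearby $t_1,t_2$ near a density point is unnecessary but harmless. The gap is in Step 2.

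In Step 2 you require $\dist_{V_n}(x_n,y_n)\asymp r_n$. You justify it by saying that if no domain carries a linear projection, then all of $\hat\dist(x_n,y_n)$ is accounted for by sublinear contributions, contradicting $\mathbf x\neq\mathbf y$. That inference is wrong, because sublinearly many sublinear terms can sum to a linear distance. For instance, in the pants graph (the factored space of $\MCG$), take a pair whose hierarchy has a main geodesic of length $\sqrt{r_n}$ in $\mathcal CS$, and a geodesic of length $\sqrt{r_n}$ in $\mathcal C(S-c)$ for each vertex $c$ of it. Then $\hat\dist(x_n,y_n)\asymp r_n$, while every single projection is $O(\sqrt{r_n})$, and nothing prevents $\fcone\circ\alpha$ from looking like this.

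Your fallback fails for the same reason. The set of domains with $\dist_{V_n}(x_n,y_n)\geq\eta r_n$ may be empty. Even when it is not, $\nest$--minimality only bounds individual nested projections by $\eta r_n$, not the sums $\hat\sigma_W$ that the condition $e_n\ll r_n$ requires.

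The fix is to run the descent with distance formula sums rather than single projections. Start from the $\nest$--maximal domain. While some sequence $W_n\propnest V_n$ has $\omega$--$\lim_n \hat\sigma_{W_n}(x_n,y_n)/r_n>0$, pass to it. Bounded complexity stops this, producing $V_n$ with $\hat\sigma_{V_n}(x_n,y_n)\geq c\,r_n$ for some $c>0$ and $\sup_{W\propnest V_n}\hat\sigma_W(x_n,y_n)=o(r_n)$. This is the output of the cited lemma, and it is all your Steps 3 and 4 actually use:
\begin{itemize}
\item By the distance formula in $F_{V_n}$, the gates of $x_n,y_n$ are linearly far apart, which gives distinct endpoints of $\mathbf F'$.
\item The large link axiom forces $\dist_{V_n}(x_n,y_n)\to\infty$. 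Otherwise $\hat\sigma_{V_n}(x_n,y_n)$ would be bounded by $\dist_{V_n}(x_n,y_n)$ plus boundedly many $o(r_n)$ terms.
\item Since $\dist_{V_n}(x_n,y_n)$ is large, bounded geodesic image and consistency bound $\hat\dist(x_n,P_{V_n})$ linearly in $\hat\dist(x_n,y_n)$, hence $\hat\dist(o_n,\gamma_n)=O(r_n)$.
\end{itemize}
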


\begin{proof} 
    Lemma \ref{lem:arc-in-nature} provides a rectifiable arc $\alpha$ in $\bD$ and a point $x\in \mathbf F$ such that $\mathbf h(\alpha(t))=x$ for all $t$, and $\alpha$ has one endpoint in $\bC_\bo$ and one in $\bC_\bp$, and $\jacobian{\mathbf h}$ has full rank at $\haus^1$--almost all points in $\alpha$.  We view $\alpha$ as a path $\alpha:[-\ell,\ell]\to \mathbf D$ parametrised by arc length, and identify $\alpha$ with the interval $[-\ell,\ell]$ in the obvious way, with $\alpha(-\ell)\in\bC_\bo$ and $\alpha(\ell)\in\bC_\bp$.

Now apply \cite[Lemma 13.13]{HHS_I} to the points $a_\bo:=\hat f(\alpha(-\ell))$ and $a_\bp:=\hat f(\alpha(\ell))$, to obtain a nontrivial path $\boldgamma$ in $\hatcone$ that is an ultralimit of hierarchy paths $(\gamma_n)_n$ in $\widehat X$, joining points $u_n,v_n$,  with the following properties:
\begin{itemize}
   \item There exists $\bV=\omega-\lim_nV_n$, where each $V_n\in\mathfrak S-\mathfrak S_{ql}$ (because we are working with $\widehat X$, not $X$), and $\mathbf F_\bV=\omega-\lim_n F_{V_n}\subset \hatcone$, such that $\boldgamma\subseteq \mathbf F_\bV$.

    \item If $W_n\propnest V_n$ for $\omega$--a.e. $n$, then $\omega-\lim_n\hat\sigma_{W_n}(u_n,v_n)/r_n=0$.
    
    \item The gate map $\hatcone\to\mathbf F_\bV$ sends $a_\bo$ and $a_\bp$ to distinct endpoints $\mathbf u=(u_n)_n,\mathbf v=(v_n)_n$ of $\boldgamma$. 
\end{itemize}
Since each $\gamma_n$ is a hierarchy path, we have from, e.g. \cite[Lem. 27.1]{CRHK} that $\boldgamma$ is an embedded bilipschitz path, and we therefore conflate $\boldgamma$ with its image. 

Note that $(\gamma_n)_n$ satisfies the properties from Definition \ref{defn:valid-boxes} needed to be a small valid sequence of (rank $1$) hierarchy boxes, so by Lemma \ref{lem:hedge-gate-support}, $\boldgamma$ is a rank--$1$ standard box (see Definition \ref{defn:hedge}), and the support of $\boldgamma$ is $\bV$ by Definition \ref{defn:support-standard-flat}.  Take $\mathbf F'=\boldgamma$.

Letting $\gate':\hatcone\to\mathbf F'$ be the gate map, we thus have $\gate'(a_\bo)\neq \gate'(a_\bp)$.

    We are left to argue that there exists $\bf z\in \alpha$ at which  $\mathbf h$ and $\mathbf h'$ have full rank Jacobian.

Let $A\subseteq\alpha$ be the set of points where $\mathbf h$ has full-rank Jacobian; recall that $\haus^1$--almost all points in $\alpha$ are in $A$.  On the other hand, $\mathbf h':\alpha\to\mathbf F'$ is lipschitz, so by \cite[Thm. 3.3]{Simon:GMT}, there exists a positive $\mathcal H^1$-measure subset $A'$ of $\alpha$ where $\jacobian{\mathbf h'}$ is well-defined and non-zero, and there is therefore some $\mathbf z\in A\cap A'$, as required.
\end{proof}

Thus far, we have a standard $k$--flat $\mathbf F$ with support $\{\bU_1,\ldots,\bU_k\}$ with a map $\mathbf h:\mathbf D\to \mathbf F$, and a standard $1$--box $\mathbf F'$ with support $\{\bU_{k+1}\}$ and a map $\mathbf h':\alpha\to \mathbf F'$ ($\mathbf h$ and $\mathbf h'$ both come from gate maps).

We now show that $\mathbf F,\mathbf F'\hookrightarrow\hatcone$ extend naturally to a bilipschitz embedding $\mathbf F\times \mathbf F'\to\hatcone$ whose image is a standard box.  The following statement also describes the gate map to this product as the product of gate maps to the factors, which then puts us in the situation of Lemma \ref{lem:full_rank_h}.  

\begin{cor}\label{cor:orthogonal-supports}
The standard flats $\mathbf F,\mathbf F'$ have the following properties:
\begin{enumerate}
    \item Any two distinct elements of $\{\bU_1,\ldots,\bU_{k+1}\}$ are orthogonal.\label{item:new-flat-orthogonal}

    \item There is a bilipschitz embedding $\iota:\mathbf F\times \mathbf F'\to\hatcone$ and a point $(f,f')\in\mathbf F\times\mathbf F'$ such that $\iota(a,f')=a$ and $\iota(f,b)=b$ for all $a\in\mathbf F$ and $b\in\mathbf F'$.\label{item:new-flat-product}

    \item Let $\gate'':\hatcone\to\mathbf F''$ be the gate map and let $\mathbf h''=\gate''\circ \fcone|_{\mathbf D}$. Then $\mathbf h''=\iota\circ(\mathbf h|_{\mathbf C_o}\times \mathbf h')$.\label{item:new-flat-gate}

    \item The image of $\iota$ is a standard box (recall Definition \ref{defn:hedge}) $\mathbf F''$ in $\hatcone$.\label{item:new-flat-flat}
\end{enumerate}
\end{cor}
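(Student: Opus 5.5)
The plan is to prove item \eqref{item:new-flat-orthogonal} first, since the other three items follow from it together with the product structure of standard product regions in Remark \ref{rem:product-region-facts}. Orthogonality among $\bU_1,\ldots,\bU_k$ is part of the definition of the standard flat $\mathbf F$. So it suffices to show that $V_n:=U^n_{k+1}$ is orthogonal to each $U_i^n$ for $\omega$--a.e. $n$. I will argue by contradiction and split into cases according to the relation between $V_n$ and $U_i^n$ for $\omega$--a.e. $n$: equal, $V_n\propnest U_i^n$, $U_i^n\propnest V_n$, or $V_n\transverse U_i^n$.

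In each case the contradiction comes from the arc $\alpha$. The key fact is that $\mathbf h$ is constant on $\alpha$, while $\mathbf h'$ is not. Hence the projections $\pi_{\bU_i}\circ\fcone$ to the support trees $T\bU_i$ are constant along $\alpha$, since they factor coarsely through the gate to $\mathbf F$. On the other hand, $\pi_\bV\circ\fcone$ moves along $\alpha$. Moreover, at the chosen point $\mathbf z$, the map $\mathbf h'$ has nonzero derivative, so there is linear progress in $T\bV$ near $\mathbf z$.

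The cases then go as follows.
\begin{itemize}
\item If $V_n=U_i^n$, the two projections coincide, contradicting the constancy of $\pi_{\bU_i}\circ\fcone$.
\item If $V_n\propnest U_i^n$, the validity condition $e_n\ll r_n$ says that domains properly nested in $U_i^n$ see only sublinear distance inside $H_n$. Combined with bounded geodesic image, and with $\alpha$ being constant under $\mathbf h$, this should force $\mathbf h'$ to be constant. For this I will use the gate description from Lemma \ref{lem:hedge-gate-support}.
\item If $U_i^n\propnest V_n$ or $V_n\transverse U_i^n$, I use that $\mathbf F'=\boldgamma$ satisfies the "$W_n\propnest V_n$ gives sublinear $\hat\sigma$" property. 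I also use the fact that $\mathbf F$ has rank $k$ with $s_n\gg r_n$ in the $U_i^n$ direction. By consistency, the $\rho$-points then force the $\bV$--progress along $\alpha$ to happen at a single coarse location in $T\bV$, namely $\rho^{U_i}_V$. This is incompatible with $\mathbf h'$ being bilipschitz near $\mathbf z$, which is where the full-rank Jacobian from Lemma \ref{lem:1-box} is used.
\end{itemize}
I expect this transverse/nested case analysis to be the main obstacle. My first attempt will be to reduce it to the fact that the gates of $\mathbf F$ and $\mathbf F'$ commute only when the supports are orthogonal, and otherwise one gate collapses the other.

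Given orthogonality, items \eqref{item:new-flat-product} and \eqref{item:new-flat-flat} follow by taking the representing boxes $H_n$ and $\gamma_n$, moving $\gamma_n$ by the coarse gate into a common product region $\bigcap_{i\le k+1}\widehat P_{U_i^n}$, and forming the product hierarchy box of Definition \ref{defn:hierarchy-box}. This box is a small valid sequence: the $e_n$ condition holds for each factor separately, and the distance to $o_n$ is $O(r_n)$ because $\alpha$ lies in a bounded region. Lemma \ref{lem:hedge-gate-support} then gives the bilipschitz product $\iota$ with image a standard box. The point $(f,f')$ comes from choosing the basepoint $p_n$ in the intersection of the product regions. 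Item \eqref{item:new-flat-gate} follows from item (6) of Remark \ref{rem:product-region-facts}: the coarse gate to a product of orthogonal factors is coarsely the product of the gates to the factors. Passing to the ultralimit gives $\gate''=\iota\circ(\gate\times\gate')$, and restricting to $\fcone|_{\mathbf D}$ identifies $\mathbf h''$ as claimed.
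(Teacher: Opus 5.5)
Your derivation of items (2)--(4) from item (1) is essentially the paper's: gate the paths representing $\mathbf F'$ into a common product region, form the product box, and identify its gate with the product of the gates. The case analysis for item (1), however, has a genuine gap. Information along the arc $\alpha$ alone cannot exclude the non-orthogonal cases.

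First, $\pi_{\bU_j}\circ\fcone$ does not factor through the gate to $\mathbf F$. Rather, $\pi_{\bU_j}\circ\gate$ is the closest-point projection of $\pi_{\bU_j}$ onto the line $\pi_{\bU_j}(\mathbf F)\subseteq T\bU_j$. So constancy of $\mathbf h$ on $\alpha$ only confines $\pi_{\bU_j}(\fcone(\alpha))$ to the part of $T\bU_j$ that projects to $\pi_{\bU_j}(\mathbf x)$. With this freedom, each non-orthogonal configuration is compatible with $\mathbf h$ constant and $\mathbf h'$ non-constant on $\alpha$:
\begin{itemize}
\item If $\bV\transverse\bU_j$, the arc can move in $T\bV$ while its $\bU_j$--projection sits at $\rho^{\bV}_{\bU_j}$. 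Consistency pins the $\bU_j$--coordinate, not the $\bV$--progress, so your consistency claim is the wrong way round.
\item If $\bU_j\propnest\bV$, its $\bV$--projection can move along a path avoiding $\rho^{\bU_j}_{\bV}$, and bounded geodesic image then keeps the $\bU_j$--projection constant.
\item If $\bV\propnest\bU_j$, it can sit at $\rho^{\bV}_{\bU_j}$ and move freely in $T\bV$.
\end{itemize}
The validity conditions you invoke ($e_n\ll r_n$, $s_n\gg r_n$, sublinear nested sums for $\boldgamma$) constrain only the boxes $H_n$ and the paths $\gamma_n$, not the arc. They cannot rule these configurations out. In particular, ``$e_n\ll r_n$ plus bounded geodesic image forces $\mathbf h'$ to be constant'' is false as stated. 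Your fallback (``one gate collapses the other'') is likewise compatible with $\mathbf h$ being constant on $\alpha$.

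The missing ingredient is the full rank of $\jacobian{\mathbf h}$ at $\haus^1$--almost every point of $\alpha$. Your item (1) never uses it; you only use the rank of $\jacobian{\mathbf h'}$ at $\mathbf z$. The paper takes $\mathbf z$ from Lemma \ref{lem:1-box} and a second point $\mathbf w\in\alpha$ with $\jacobian{\mathbf h}(\mathbf w)$ of full rank and $\epsilon:=\cdist_{k+1}(\mathbf w,\mathbf z)>0$. Full rank at $\mathbf z$ and $\mathbf w$, together with $\mathbf h(\mathbf z)=\mathbf h(\mathbf w)$, yields nearby points $\mathbf z',\mathbf w'\in\mathbf D$ such that $\pi_{\bU_j}(\mathbf z')=\pi_{\bU_j}(\mathbf w')\neq\pi_{\bU_j}(\mathbf z)=\pi_{\bU_j}(\mathbf w)$, and whose $\bV$--displacements from $\mathbf z,\mathbf w$ are less than $\epsilon/10$. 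Consistency and bounded geodesic image applied to these four points give the contradiction in each case:
\begin{itemize}
\item If $\bU_j\propnest\bV$, the $\bU_j$--progress from $\mathbf w$ to $\mathbf w'$ forces $\rho^{\bU_j}_{\bV}$ onto the geodesic between their $\bV$--projections. The same holds for $\mathbf z,\mathbf z'$, but these two geodesics are disjoint.
\item The transverse case is similar.
\item If $\bV\propnest\bU_j$, the arc forces $\rho^{\bV}_{\bU_j}=\pi_{\bU_j}(\mathbf z)$. Hence $\pi_{\bV}(\mathbf z')=\pi_{\bV}(\mathbf w')$, contradicting the $\epsilon$--separation.
\end{itemize}
So what is incompatible with non-orthogonality is the $\bU_j$--progress available near two $\bV$--separated points of $\alpha$, not the $\bV$--progress along $\alpha$. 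Without such off-arc perturbation points, item (1) does not follow.
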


\begin{proof}
The elements of $\{\bU_1,\ldots,\bU_{k}\}$ are pairwise orthogonal, so we need to show that $\bU_{k+1}$ is orthogonal to any $\bU_j$ for $j\leq k$.  

In this proof, let $\pi_{\bU_i}:\mathbf D\to\omega-\lim_n(\mathcal CU_i^n,\dist_{U_i^n}/r_n):=\mathcal C\bU_i$ be the ultralimit of the maps $\pi_{U_i^n}\circ \gate_n\circ \hat f$, and let $\cdist_i=\omega-\lim_n\dist_{U_i^n}/r_n$, so that $(\mathcal C\bU_i,\cdist_i)$ is a real tree.  If $i,j$ are distinct and $k+1\in\{i,j\}$, then below we will consider the possibility that $\bU_i\propnest\bU_j$ (so $U^n_i\propnest U_j^n$ for $\omega$--a.e. $n$) or $\bU_i\transverse\bU_j$ (so $U^n_i\transverse U^n_j$ for $\omega$--a.e. $n$), and we let $\rho^{\bU_i}_{\bU_j}\in\mathcal C\bU_j$ be the point represented by $\left(\rho^{U_i^n}_{U_j^n}\right)_n$.  Note that the consistency axioms for HHSs (\cite[Defn. 1.1]{HHS_II}) pass to rescaled ultralimits; for example, if $\bU_i\transverse\bU_j$, and $\mathbf y\in\mathbf D$, then $\pi_{\bU_j}(\mathbf y)=\rho^{\bU_i}_{\bU_j}$ or the same holds with $i$ and $j$ reversing roles.

Consider the point $\bf z\in\alpha$ as in Lemma \ref{lem:1-box}. We will also choose points $\bf z'$, $\bf w$, $\bf w'$ as follows. First, let $\bf w$ be another point on $\alpha$. Since $\mathbf h(\alpha)=\mathbf x$, we have $\pi_{\bU_j}(\mathbf w)=\pi_{\bU_j}(\mathbf z)$ for $j\leq k$.  Next, since $\jacobian{\mathbf h'}(\mathbf z)$ has rank $1$, and $\jacobian{\mathbf h}$ has full rank almost everywhere along $\alpha$, by Lemma \ref{lem:arc-in-nature}, we can require $\epsilon:=\cdist_{{k+1}}(\mathbf w,\mathbf z)>0$ and also that $\jacobian{\mathbf h}(\mathbf w)$ has full rank. Using the latter property, pick $\mathbf w'\in \mathbf D$ (close to $\mathbf w$) such that $\cdist_{k+1}(\mathbf w,\mathbf w')>0$ but $\cdist_{{k+1}}(\mathbf w,\mathbf w')<\epsilon/10$. Finally, we pick $\mathbf z'$ (close to $\mathbf z$) similarly, and we can arrange $\pi_{\bU_{j}}(\mathbf w')=\pi_{\bU_{j}}(\bf z')$.

The existence of the four points $\bf w$, $\bf w'$, $\bf z$, $\bf z'$ with the stated properties of projections forces $\bU_{k+1}$ to be orthogonal to $\bU_j$, as we now argue by excluding all other cases using the consistency inequalities.

We cannot have $\bU_{k+1}=\bU_j$, since $\pi_{\bU_j}(\mathbf w)=\pi_{\bU_j}(\mathbf z)$ but this does not hold for $\bU_{k+1}$.

Suppose $\bU_{k+1}\transverse \bU_j$. Since
$$\cdist_{k+1}(\mathbf w,\mathbf z)>10\max\{\cdist_{k+1}(\mathbf w',\mathbf w),\cdist_{k+1}(\mathbf z',\mathbf z)\}\ \ \ (*)$$
we have $\rho^{\bU_j}_{\bU_{k+1}}\neq \bf \pi_{\bU_{k+1}}(\bf w), \bf \pi_{\bU_{k+1}}(\bf w')$, or the same for $\bf z, \bf z'$. Assume the former, as the other case is symmetric. Consistency implies $\bf \pi_{\bf U_{j}}(\bf w)=\rho_{\bU_j}^{\bU_{k+1}} =\bf \pi_{\bf U_{j}}(\bf w')$, contradicting $\cdist_{j}(\mathbf w,\mathbf w')>0$.

Suppose $\bU_{k+1}\propnest \bU_j$. Since $\cdist_{k+1}(\mathbf w,\mathbf z)>0$ and $\mathbf \pi_{U_j}(\mathbf w)=\mathbf \pi_{U_j}(\mathbf z)$, by Bounded Geodesic Image (see \cite[Defn. 1.1]{HHS_II}) we must have $\rho_{\bU_j}^{\bU_{k+1}}= \mathbf \pi_{U_j}(\mathbf w)=\mathbf \pi_{U_j}(\mathbf z)$. But then, since $\bf \pi_{\bf U_{j}}(\bf w')=\bf \pi_{\bf U_{j}}(\bf z')$ do not coincide with $\rho_{\bU_j}^{\bU_{k+1}}$, by consistency we should have $\bf \pi_{\bU_{k+1}}(\bf w')=\bf \pi_{\bU_{k+1}}(\bf z')$, contradicting one of the projection properties of the four points.

Suppose $\bf U_{j}\propnest \bU_{k+1}$. Since $\cdist_{k+1}(\mathbf w',\mathbf w)>0$, we must have that $\rho^{\bU_j}_{\bU_{k+1}}$ lies on a geodesic from $\bf \pi_{\bU_{k+1}}(\bf w)$ to $\bf \pi_{\bU_{k+1}}(\bf w')$. The same holds for $\bf z$ and $\bf z'$, but the two relevant geodesics do not intersect because of $(*)$.

\textbf{Conclusion.} Recall that for $i\leq k$, we have a sequence hierarchy intervals $\gamma_n=\prod_i\gamma^n_i\to \bigcap_{i=1}^kP_{U_i^n}$ such that $\omega-\lim_n\gamma_n=\mathbf F$.  Let $a_n,b_n$ be the endpoints of $\gamma_n$.  

Consider a sequence $\alpha_n$ of hierarchy intervals converging to $\mathbf F'$.  Let $E^n$ be the subset of $\bigcap_{i=1}^kP_{U_i^n}$ consisting of those points $a$ such that $\pi_{V}(a)$ is $C$--close to $\pi_{V}(a_n)$ for all $V\nest U_i^n$ and $i\leq k$, where $C$ is a constant that depends on the HHS parameters but not on $n$.  Then the sets $E^n$ are hierarchically quasiconvex subsets such that $\bigcap_{i=1}^kP_{U_i^n}$ is uniformly quasi-isometric to $\prod_{i=1}^kF_{U_i}^{a_n}\times E^n$.  Since $U_{k+1}\orth U_i$ for $i\leq k$, and $\gamma_n\subseteq \prod_{i=1}^kF_{U^n_i}^{a_n}$ for all $n$, we can replace $\alpha_n$ by its image under the gate map to $\gamma_n\times E^n$, and thus assume that the inclusions $\gamma_n,\alpha_n\to \widehat X$ extend to a uniformly quasimedian uniform quasi-isometric embedding $\gamma_n\times \alpha_n\to \widehat X$ whose image is a hierarchy interval.  So, $\mathbf F'':=\omega-\lim_n\gamma_n\times\alpha_n$ is a standard flat supported on $\{\bU_1,\ldots,\bU_{k+1}\}$ and contains $\mathbf F,\mathbf F'$.  Moreover, the gate maps $\gate,\gate'$ restrict on $\mathbf F''$ to maps whose product is a bilipschitz homeomorphism $\mathbf F''\to \mathbf F\times\mathbf F'$, giving items \eqref{item:new-flat-product}, \eqref{item:new-flat-flat}, \eqref{item:new-flat-gate}.
\end{proof}

We fix the notation of Corollary \ref{cor:orthogonal-supports} from now on. Combining Lemma \ref{lem:full_rank_h} and Corollary \ref{cor:orthogonal-supports}.\eqref{item:new-flat-flat} gives:

\begin{cor}
\label{cor:full_rank}
    There exists $\mathbf w\in \mathbf D$ where $\jacobian{\mathbf h''}$ has full rank.
\end{cor}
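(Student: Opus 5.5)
The plan is to apply Lemma~\ref{lem:full_rank_h} to $\mathbf h''$, after choosing coordinates as in Convention~\ref{conv:coordinates}. Identify $\mathbf D$ with $\reals^k\times[-1,1]$ as in Definition~\ref{defn:new-block}. By Corollary~\ref{cor:orthogonal-supports}, the image $\mathbf F''$ of $\iota$ is a standard box bilipschitz to $\mathbf F\times\mathbf F'$, and $\mathbf F'$ is an interval by Lemma~\ref{lem:hedge-gate-support}. So we may view $\mathbf h''$ as a lipschitz map $\reals^k\times[-1,1]\to\reals^k\times\reals=\reals^{k+1}$. In these coordinates $\pi_1\circ\mathbf h''$ corresponds to $\mathbf h$ (the gate to $\mathbf F$) and $\pi_2\circ\mathbf h''$ corresponds to $\gate'\circ\fcone$ (the gate to $\mathbf F'$). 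This uses item~\eqref{item:new-flat-gate}: the gate to $\mathbf F''$ is the product of the gates to the factors, and $\gate$ composed with the projection $\mathbf F''\to\mathbf F$ is the gate to $\mathbf F$. Lipschitzness of $\mathbf h''$ follows from that of $\fcone$ and of the gate (Lemma~\ref{lem:hedge-gate-support}).

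Next, translate coordinates so that the point $\mathbf z\in\alpha$ from Lemma~\ref{lem:1-box} sits at $0_{k+1}$ and its image sits at $0_{k+1}$. Then I would check the three hypotheses of Lemma~\ref{lem:full_rank_h}.

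\emph{Hypothesis (1).} $\jacobian{\pi_1\circ\mathbf h''}=\jacobian{\mathbf h}$ has rank $k$ at $\mathbf z$, by the choice of $\mathbf z$ in Lemma~\ref{lem:1-box}.

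\emph{Hypothesis (3).} Reparametrise $\alpha$ lipschitz over $[-1,1]$ with $\alpha(0)=\mathbf z$. Then $\pi_1\circ\mathbf h''\circ\alpha=\mathbf h\circ\alpha$ is constant, since $\alpha\subseteq\mathbf h^{-1}(x)$ by Lemma~\ref{lem:arc-in-nature}.

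\emph{Hypothesis (2).} $\pi_2\circ\mathbf h''\circ\alpha=\mathbf h'$, and its Jacobian has rank $1$ at $\mathbf z$, again by Lemma~\ref{lem:1-box}. Lemma~\ref{lem:full_rank_h} then yields a point $\mathbf w$ where $\jacobian{\mathbf h''}$ has rank $k+1$.

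The main obstacle is making the coordinate bookkeeping honest. Lemma~\ref{lem:full_rank_h} wants the Jacobians at the specific point $0_k$ and along the specific curve through it, so the choices of coordinates on $\mathbf D$, $\mathbf F$ and $\mathbf F'$ must be compatible with the identification of $\mathbf h''$ as $\iota\circ(\mathbf h\times\mathbf h')$. Full rank is invariant under bilipschitz changes of coordinates only almost everywhere, not necessarily at a given point. To handle this, I would pick the coordinates on $\mathbf F''$ to be the product coordinates pulled back via $\iota$, so that $\pi_1,\pi_2$ are literally the gates to $\mathbf F,\mathbf F'$, and use the same coordinates on $\mathbf D$ in which $\mathbf z$ was chosen. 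Hypothesis~(1) is stated in the lemma at $0_k$, but it is really used at the point $0_{k+1}$ of $\mathbf D$. If differentiability of $\pi_1\circ\mathbf h''$ is only meaningful at $\mathbf z$ as a point of $\mathbf D$, I would read hypothesis~(1) that way: its proof only uses the first-order expansion $\dist(h_1(b,t),b)=o(\max\{|b|,|t|\})$ near the origin. A further point is that Lemma~\ref{lem:full_rank_h} needs $\mathbf z$ to lie in the interior of $\mathbf D$ rather than on $\bC_\bo$ or $\bC_\bp$. This holds after discarding the endpoints of $\alpha$, which form a null set.
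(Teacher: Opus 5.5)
Your proposal is correct and follows the paper's argument. The paper proves this corollary in one line, by applying Lemma~\ref{lem:full_rank_h} to $\mathbf h''$ in the product coordinates $\mathbf F''\cong\mathbf F\times\mathbf F'$ from Corollary~\ref{cor:orthogonal-supports}, with hypotheses (1)--(3) supplied by the point $\mathbf z$ of Lemma~\ref{lem:1-box} and the arc $\alpha\subseteq\mathbf h^{-1}(x)$ of Lemma~\ref{lem:arc-in-nature}. The bookkeeping you add is what the paper leaves implicit: coordinates pulled back via $\iota$, reading hypothesis (1) at $0_{k+1}$, and taking $\mathbf z$ off the endpoints of $\alpha$. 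The one thing to tighten is the reparametrisation of $\alpha$: make it affine on a sub-arc symmetric about $\mathbf z$, so that differentiability of $\mathbf h'$ at $\mathbf z$ is preserved.
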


Now we are ready to complete the proof of the main theorem, by ``zooming in'' on a point of $\mathbf F''$, to pass from having a point of full-rank Jacobian to having a bilipschitz map.

\begin{proof}[Proof of Theorem \ref{thm:main}]
From Corollary \ref{cor:orthogonal-supports}, we have sequences $(U_i^n)_n$ in $\mathfrak S-\mathfrak S_{ql}$, for $1\leq i\leq k+1$, such that for all $i\neq j$ and $\omega$--a.e. $n$, $U^i_n\orth U^j_n$.  Moreover, we have hierarchy intervals $(\beta_n)_n$ such that $\omega-\lim_n\beta_n=\mathbf F''$, the standard box from Corollary \ref{cor:orthogonal-supports}.  Letting $\gate_n:\widehat X\to \beta_n$ be the (coarse) gate, the maps $\gate_n\circ\hat f$ limit to the map $\mathbf h''$ from Corollary \ref{cor:orthogonal-supports}.

Now, recall that in $T_3^k$, we have boxes $C_n=\prod_{i=1}^k[a_i^n,b_i^n]$ and segments $[o,p_n]$ in $T_3$ such that $\omega-\lim_n C_n\times [o,p_n]=\mathbf D$.  

Corollary \ref{cor:full_rank} provides $\mathbf w\in\mathbf D$ such that $\jacobian{\mathbf h''}(\mathbf w)$ has full rank, so there is a constant $L\geq 1$ such that the following holds: for all $\epsilon>0$, there exists $r>0$ such that 
$$\frac{\mathbf d(x,y)}{L}-\epsilon r\leq \cdist(\mathbf h''(x),\mathbf h''(y))\leq L\mathbf d(x,y)+\epsilon r$$ 
for all $x,y\in\mathbf D$ such that $\mathbf d(x,\mathbf w),\mathbf d(y,\mathbf w)<r$.

 Now apply the underspill principle (as in, for instance, the proof of Claim 2 in \cite[Thm. 13.11]{HHS_I}) to obtain a new rescaling sequence $(r'_n)_n$ such that $(e_n)_n\ll(r'_n)_n\ll(r_n)_n$ and the following holds: let $\mathbf d'=\omega-\lim_n d/r'_n$ and let $\hatcone'$ be the asymptotic cone of $\widehat X$ obtained using the new rescaling factor $(r_n')$.  Since $(e_n)_n\ll(r_n')_n$ and $(r_n')_n\ll (s_n)_n$ (using that, by Definition \ref{defn:valid-boxes}, either $(r_n)_n\ll (s_n)_n$ or $s_n=O(r_n)$), the limit $\omega-\lim_n \beta_n$ in $\hatcone'$ is a standard flat, which we denote $\mathbf F'''$, and, redefining $\mathbf h'''$ to be the ultralimit of the same maps $(\gate_n\circ\hat f)_n$ but with the new rescaling, we have
 $$\frac{\mathbf d(x,y)}{L}\leq \cdist(\mathbf h'''(x),\mathbf h'''(y))\leq L\mathbf d(x,y)$$
 for all $x,y\in \omega-\lim_n C_n\times[o,p_n]$.  So, by passing to sub-boxes, there is an ultralimit $\bC$ of boxes in $T^{k+1}_3$ and a standard $(k+1)$--flat $\mathbf F'''$ in the asymptotic cone $\hatcone'$, with gate $\gate'''$, such that the composition $\gate'''\circ\fcone:\mathbf C\to \mathbf F'''$ is bilipschitz.  This completes the proof.
\end{proof}

\bibliographystyle{alpha}
\bibliography{bushy-QR}
\end{document}